\newtheorem{theorem}{Theorem}
\newtheorem{corollary}{Corollary}
\newtheorem{lemma}{Lemma}
\newtheorem{proposition}{Proposition}
\newtheorem{remark}{Remark}
\newtheorem{example}{Example}
\numberwithin{theorem}{section}
\numberwithin{corollary}{section}
\numberwithin{lemma}{section}
\numberwithin{proposition}{section}
\numberwithin{remark}{section}
\numberwithin{definition}{section}
\numberwithin{example}{section}
\newcommand{\mR}{\mathbb{R}}
\newcommand{\mC}{\mathbb{C}}
\newcommand{\mD}{\mathbb{D}}
\newcommand{\mT}{{\mathbb T}}
\newcommand{\mM}{{\mathbb M}}
\newcommand{\gln}{{\mathbb{GL}_n}}
\newcommand{\mU}{{\mathbb U}}
\newcommand{\mH}{{\mathbb H}}
\newcommand{\mP}{{\mathbb P}}
\newcommand{\mA}{{\mathbb A}}
\def\RR{\boldsymbol{\rm R}}
\newcommand{\Linfty}{\boldsymbol{\rm L}_{\infty}}
\newcommand{\Hinfty}{\boldsymbol{\rm H}_{\infty}}
\let\odlangle\angle  
\renewcommand{\angle}{\odlangle \,}
\DeclareMathOperator*{\diag}{diag}
\DeclareMathOperator*{\real}{real}
\DeclareMathOperator*{\range}{range}
\DeclareMathOperator*{\esssup}{ess\,sup}
\DeclareMathOperator*{\bdry}{\partial}
\DeclareMathOperator*{\intr}{int}
\DeclareMathOperator*{\rank}{rank}
\newcommand{\homoparam}{\alpha}
\newcommand{\phimax}{\overline{\phi}}
\newcommand{\phimin}{\underline{\phi}}
\newenvironment{bsmallmatrix}
  {\left[\begin{smallmatrix}}
  {\end{smallmatrix}\right]}
\def\iddots{\mathinner{\mkern1mu\raise\p@
\vbox{\kern7\p@\hbox{.}}\mkern2mu
\raise4\p@\hbox{.}\mkern2mu\raise7\p@\hbox{.}\mkern1mu}}
\title{Gain and phase type multipliers\\ for feedback robustness}
\author{Axel Ringh, Xin Mao, Wei Chen, Li Qiu, and Sei~Zhen~Khong,% <-this % stops a space
\thanks{*
This work was partially supported by the Knut and Alice Wallenberg foundation via grant KAW 2018.0349, by the Wallenberg AI, Autonomous Systems and Software Program (WASP) funded by the Knut and Alice Wallenberg Foundation, by the Hong Kong Research Grants Council under projects GRF~16200619 and GRF~16201120, by the Guangdong Science and Technology Department under the project No. 2019B010117002, by the National Natural Science Foundation of China under grants 62073003 and 72131001, and by the National Science and Technology Council of Taiwan under grants 113-2222-E-110-002-MY3 and 114-2218-E-007-011-. Corresponding author: Sei Zhen Khong.}% <-this % stops a space
\thanks{A.~Ringh is with Department of Mathematical Sciences, Chalmers University of Technology and University of Gothenburg, SE-412 96 Gothenburg, Sweden. Email: {\tt\small axelri@chalmers.se}}
\thanks{X.~Mao is with School of Data Science and Society, University of North Carolina at Chapel Hill, Chapel Hill, NC 27599, USA. Email: {\tt \small xinm@unc.edu}}
\thanks{W.~Chen is with School of Advanced Manufacturing and Robotics \& State Key Laboratory for Turbulence and Complex Systems, Peking University, Beijing 100871, China. Email: {\tt\small w.chen@pku.edu.cn}}
\thanks{L.~Qiu is with School of Science and Engineering, The Chinese University of Hong Kong, Shenzhen, Guangdong, China. Email: {\tt\small qiuli@cuhk.edu.cn}}
\thanks{S.~Z.~Khong is with Department of Electrical Engineering, National Sun Yat-sen University, Kaohsiung 80424, Taiwan. Email: {\tt\small  szkhong@mail.nsysu.edu.tw}}
}
\begin{document}

\maketitle
\thispagestyle{empty}
\pagestyle{empty}

%%%%%%%%%%%%%%%%%%%%%%%%%%%%%%%%%%%%%%%%%%%%%%%%%%%%%%%%%%%%%%%%%%%%%%%%%%%%%%%%
\begin{abstract}
It is known that the stability of a feedback interconnection of two linear time-invariant systems implies that the graphs of the open-loop systems are quadratically separated. This separation is defined by an object known as the multiplier. The theory of integral quadratic constraints shows that the converse also holds under certain conditions. This paper establishes that if the feedback is robustly stable against certain structured uncertainty, then there always exists a multiplier that takes a corresponding form. In particular, if the feedback is robustly stable to certain gain-type uncertainty, then there exists a corresponding multiplier that is of phase-type, i.e., its diagonal blocks are zeros. These results build on the notion of phases of matrices and systems, which was recently introduced in the field of control. Similarly, if the feedback is robustly stable to certain phase-type uncertainty, then there exists a gain-type multiplier, i.e., its off-diagonal blocks are zeros. The results are meaningfully instructive in the search for a valid multiplier for establishing robust closed-loop stability, and cover the well-known small-gain and the recent small-phase theorems.
\end{abstract}

\begin{IEEEkeywords}
  Feedback robustness, structured uncertainty, multipliers, quadratic graph separation.
\end{IEEEkeywords}

%%%%%%%%%%%%%%%%%%%%%%%%%%%%%%%%%%%%%%%%%%%%%%%%%%%%%%%%%%%%%%%%%%%%%%%%%%%%%%%%
\section{Introduction}

One of the most fundamental problems in control theory is feedback stability analysis.
In this context,
it is well known that topological graph separation is both necessary and sufficient for the stability of a well-posed feedback 
configuration~\cite{teel1996input, teel1996graphs}. Such topological graph separation is required to hold in the \emph{hard}
(a.k.a. \emph{unconditional}~\cite{megretski2010integral}) manner, i.e., the integrals involved are taken over $[0, T]$ for all $T > 0$. A specific
type of separation, called quadratic graph separation with linear multipliers, has been studied extensively in the nonlinear~\cite{zames1966input,
  zames1968stability, megretski1997system, lessard2016analysis, hu2016exponential,  khong2021iqc, khong2024connections} and linear~\cite{cantoni2011robustness, cantoni2013robust, khong2016robust, khong2025feedback} literatures. 
Quadratic graph separation has often been employed in the \emph{soft} (a.k.a. \emph{conditional}) manner, where the integrals are taken over
$[0, \infty)$ in conjunction with homotopies that are continuous in the graph topology. In the linear time-invariant (LTI) setting, soft quadratic
graph separation is equivalent, via the Parseval-Plancherel theorem, to two complementary frequency-domain
inequalities~\cite{megretski1993power}. Such inequalities are the main object of study in this paper.

In robust stability analysis, the objective is to determine if a feedback interconnection between a nominal system and a set of uncertainties is stable or not for all uncertainties in the set \cite[Chp.~9]{zhou1996robust}.
One way to guarantee stability is by finding a certificate that the graph of the nominal system is separated from the graph of each of the system in the uncertainty set.%
\footnote{Sometimes, the problem is better formulated in terms of a feedback interconnection between two sets of uncertain systems, see, e.g., \cite{georgiou1990optimal, qui1992feedback, zhao2020stabilization}. In this case, the problem becomes to verify that for each pair of systems from the two uncertainty sets, the graphs are separated. The results in this paper can be interpreted as any of these two cases.}
In the case of quadratic graph separation, the object of interest is a function of an LTI object known as a multiplier, and the search for a suitable multiplier for characterizing the uncertainty is a common theme in the vast literature on robust control, see, e.g., \cite{zames1968stability, willems1968some, megretski1997system, jonsson2000optimization, scherer2001lpv, kao2007stability, pfifer2015integral}.
While quadratic graph separation with linear multipliers has in general been used as a sufficient condition for robust stability, the chief focus of this paper is on the necessity of it.
Some elegant results along this direction have been obtained in \cite{iwasaki1998well}, where it was shown that the closed-loop stability of the interconnection between a matrix and a set of matrices is equivalent to the existence of a multiplier by which quadratic separation holds. In other words, quadratic graph separation is both necessary and sufficient for the closed-loop stability of matrices. Moreover, results along this direction also generalize to LTI systems \cite{iwasaki1998well}. 

This paper strengthens the existing results by revealing a number of intricate relationships between the type of feedback robustness and the structure of any multiplier needed to establish such a robustness. Specifically, we define multipliers of the gain type (a.k.a. magnitudinal
multipliers) to be multipliers whose off-diagonal blocks are 0, and show that the existence of a gain type multiplier is equivalent with that the closed-loop system is robust against phasal uncertainties, i.e., multiplication by arbitrary stable unitary (i.e., all-pass) transfer functions. On the other hand, multipliers of the phase type (a.k.a. phasal multipliers) are defined as multipliers that are $0$ on the diagonal blocks, and we show that the existence of a phase type multiplier is equivalent with that the closed-loop system is robust against magnitudinal uncertainties, i.e., arbitrary nonnegative scalings.
The novelty in these equivalences lies in the necessity-part:
if a feedback system is robust against phasal (resp. magnitudinal) uncertainties, then its robust stability can always be established using a multiplier of the gain (resp. phase) type.
The results are of both theoretical and practical interest: theoretical, since they reveal a fundamental connection between the structure of the uncertainties and the structure of the multiplier; and practical, since they imply that if a feedback system is expected to be robust against a certain form of uncertainties, then the search for a suitable multiplier to establish its robust stability can be restricted to one that admits a prescribed structure, and vice versa.

There are also other methods to determine robust stability in the case of structured uncertainty.
One of the most prominent ways is via the structural singular value, $\mu$, in which one considers block-diagonal type uncertainties \cite{doyle1982analysis, safonov1982stability, packard1993complex}.
While computing the value of $\mu$ exactly is in general a difficult problem (NP-hard) \cite{nemirovskii1993several, braatz1994computational}, the celebrated $(D,G)$-scaling is a computable upper-bound for the structured singular value \cite{fan1991robustness, packard1993complex, meinsma1997dual}.
Moreover, for (among other cases) robust stability with respect to scalar gain uncertainties \cite{fan1991robustness, meinsma1997dual} and to scalar phase uncertainties \cite{chellaboina2008structured}, the $(D,G)$-scaling bound is know to be tight.
It has also been shown that this upper-bound being less than one, which is a sufficient condition for robust stability of the interconnection, is equivalent with the existence of a multiplier for characterizing the loop (transfer) matrix~\cite{chou1999stability} (see also \cite{hall1993mixed, how1993connections, fu1997improved}).
In this work, we, among other things, also consider the case of scalar gain and phase uncertainties (see Theorems~\ref{thm:H_sectorial}, and~\ref{thm:multiplier_phase_uncertainty} for the matrix cases).
While the structure of the multipliers in the case of scalar phase uncertainties can be obtained by carefully analyzing and using the results in \cite{chellaboina2008structured} on the loop (transfer) matrix, we use a different approach to prove our main results, working directly with the multipliers and characterizing both the potentially uncertain open-loop (transfer) matrices as opposed to only the loop (transfer) matrix.
Moreover, although the $(D,G)$-scaling method is closely related to our work, for the three other types of structured uncertainties considered (see Theorems~\ref{thm:H_sectorial}, \ref{thm:TATstarSBSstar}, and~\ref{thm: UAVB}, for the matrix cases), existing results related to the structured singular value can, to the best of our knowledge, not be used to establish the necessity of the form of the multipliers for robust stability.

Finally, note that there exist relevant converse quadratic separation results that are different from those examined in this paper.
Such results typically state that a feedback system is robustly stable against an arbitrary uncertainty characterised by a quadratic constraint if and only if the
other open-loop subsystem satisfies the reverse quadratic constraint~\cite{khong2018converse, khong2021converse, khong2022converse, khong2023converse}. However, in these references the multiplier defining the quadratic constraint is explicitly specified, whereas in this work certain forms of feedback stability are shown to imply the existence of a multiplier by which quadratic graph separation of the open-loop systems is defined.

The outline of the paper is as follows: in Section~\ref{sec:background} we introduce necessary background material related to quadratic graph separation and its use in stability analysis of multiple-input-multiple-output (MIMO) LTI systems, and to sectorial matrices and phases of a matrix.
In Section~\ref{sec:phase_multipliers} we analyze the form of multipliers needed in order to guarantee robust stability with respect to certain types of gain uncertainties. The conclusion
is that the existence of certain types of phasal multipliers is a both necessary and sufficient condition. Similarly, Section~\ref{sec:gain_multipliers} is devoted to stability against certain types of phase uncertainties, and the existence of certain types of magnitudinal multipliers turns out to be a both necessary and sufficient condition.
In Section~\ref{sec:num_ex}, we use a numerical example to illustrate how the results of the paper can be used.
The main part of the paper ends with Section~\ref{sec:conclusions}, where we draw some conclusions. Finally, in order to improve the readability, some of the lengthier proofs are
deferred to appendices in the end of the paper.

\section{Background and notation}\label{sec:background}

In this section we present some background material on quadratic graph separation, transfer matrices and multipliers for feedback stability of LTI systems, and sectorial matrices and matrix phases. Moreover, the section is also used to set up the notation; basic notation is introduced in the paragraph below, and further notation is introduced where needed.

\paragraph*{Notation}
Let $j$ denote the imaginary unit, i.e., $j^2 = -1$.
For two sets $\mathcal{A}$, $\mathcal{B}$, let $\mathcal{A} \cup \mathcal{B}$ denote the union, let $\mathcal{A} \cap \mathcal{B}$ denote the intersection, and let $\mathcal{A} \setminus \mathcal{B}$ denote the set-difference, i.e., $\mathcal{A} \setminus \mathcal{B} := \{ a \in \mathcal{A} \mid a \not \in \mathcal{B} \}$.
Let $\mR$ and $\mC$ denote the real and complex numbers, respectively, $\mR^n$ and $\mC^n$ the real and complex vectors of length $n$, respectively, 
$\mR_{+} := [0, \infty)$, $\mR_{-} := (-\infty, 0]$, and $\mR_{--} = \mR_{-} \setminus \{ 0 \}$ the positive, negative, and strictly negative real numbers,
$\mC_+ := \{ z \in \mC \mid z = a+ jb, a > 0 \}$ the open right-half complex plane,
$\mT := \{ z \in \mC \mid |z| = 1 \}$ the unit circle,
and $\mD := \{ z \in \mC \mid |z| < 1 \}$ the open unit disc.
Next, let $\mM_{n,m}$ denote the set of complex matrices with $n$ rows and $m$ columns; for square matrices we simply write $\mM_n$.
Let $\gln \subset \mM_n$ denote the set of invertible matrices, $\mH_n \subset \mM_n$ the set of Hermitian matrices, $\mP_n \subset \mH_n$ the set of (Hermitian) positive definite matrices, and $\mU_n \subset \gln$ the set of unitary matrices.
For the corresponding sets of matrices with real entries, we write $\mM_{n,m}(\mR)$, etc.  Moreover, on the set of Hermitian matrices we use $\succeq$ to denote the Loewner partial order, i.e., for $H_1, H_2 \in \mH_n$, $H_1 \succ H_2$ and $H_1 \succeq H_2$ means that $H_1 - H_2$ is positive definite and positive semi-definite, respectively; see, e.g. \cite[Sec.~7.7]{horn2013matrix}.
Furthermore, by $\cdot^T$ and $\cdot^*$ we denote the transpose and the conjugate transpose of a matrix, respectively, and two matrices $A,B \in \mM_n$ are said to be congruent if there exists a $C \in \gln$ such that $A = C^* B C$.
By $I_{n}$ we denote the identity matrix of size $n \times n$; sometimes the subscript $n$ is omitted when the dimension is clear from the context.
Finally, $\lambda(\cdot)$ denotes the set of eigenvalues, and $\sigma(\cdot)$ denotes the set of singular values of a matrix, i.e., for a matrix $A \in \mM_{n,m}$, $\sigma_i(A) = \sqrt{\lambda_i(A^*A)}$ and hence $A$ has $m$ singular values. By convention, the singular values are sorted in a nonincreasing order, and if $m > n$ this means that $\sigma_{n + \ell} (A) = 0$ for $\ell = 1, \ldots, m-n$.

\begin{figure}[tb]
\begin{center}
  
  \tikzstyle{int}=[draw, minimum size=2em]
  \tikzstyle{init} = [pin edge={to-,thin,black}]
  \tikzstyle{block} = [draw, rectangle, 
    minimum height=3em, minimum width=6em]
  \tikzstyle{sum} = [draw, circle]
  \tikzstyle{input} = [coordinate]
  \tikzstyle{output} = [coordinate]
  \tikzstyle{pinstyle} = [pin edge={to-,thin,black}]
  \tikzstyle{dott} =[circle,fill,inner sep=1.5pt]
  
  \begin{tikzpicture}[auto, node distance=2cm,>=latex', color=black]
    % We start by placing the blocks
    \node [sum, name=upperleft] {};
    \node [block, right of=upperleft] (disturbance) {$B$};
    \node [input, right of=disturbance] (upperright) {};
    \node [sum, below of=upperright] (lowerright) {};
    \node [block, left of=lowerright] (system) {$A$};
    \node [input, below of=upperleft] (lowerleft) {};
    
    \node[input, left = 3em of upperleft] (upperleftleft) {};
    \node[input, right = 3em of lowerright] (lowerrightright) {};

    \node [above right = 4pt of upperleftleft] (pointb) {$p$};
    \node [below left = 6pt of lowerrightright] (pointa) {$q$};

    \node [above left = 6pt of lowerleft] (pointc) {$r$};
    \node [below right = 6pt of upperright] (pointd) {$w$};
    
    \node [above left = 16pt of lowerright] (minusTMP) {};
    \node [below = 2pt of minusTMP] (minus) {$-$};

    %Connect the blocks. 
    \draw [<-] (disturbance) -- (upperright);
    \draw [-] (upperright) -- (lowerright);
    \draw [<-] (lowerright) -- (system);
    \draw [<-] (system) -- (lowerleft);
    \draw [-] (lowerleft) -- (upperleft);
    \draw [<-] (upperleft) -- (disturbance);
    
    \draw [->] (upperleftleft) -- (upperleft);
    \draw [->] (lowerrightright) -- (lowerright);

  \end{tikzpicture}
  \caption{Block diagram of feedback interconnection.}
  \label{fig:blockdiagram}
  \end{center}
\end{figure}

\subsection{Graph separation and multipliers for feedback stability}\label{sec:background_multipliers}

Let $A \in \mM_{m,n}$ and $B \in \mM_{n,m}$, and consider the (negative) feedback interconnection as shown in Figure~\ref{fig:blockdiagram}.
This interconnection is said to be stable if for each $(p,q) \in \mC^{n + m}$ there exists a unique vector $(r, w) \in \mC^{n + m}$.
From the representation in Figure~\ref{fig:blockdiagram}, it follows that $r = p + Bw$ and $w = q - Ar$, or equivalently that
\[
\begin{bmatrix}I_n & -B \\ A & I_m \end{bmatrix} \begin{bmatrix} r \\ w \end{bmatrix} = \begin{bmatrix} p \\ q \end{bmatrix}.
\]
Thus, the interconnection is stable if and only if the matrix $\begin{bsmallmatrix}I_n & -B \\ A & I_m \end{bsmallmatrix}$ is invertible, i.e, if and only if $\det( \begin{bsmallmatrix}I_n & -B \\ A & I_m \end{bsmallmatrix}) \neq 0$.
The latter is true if and only if $\det(I_m + AB) \neq 0$.

Next, let the graph of a matrix $C \in \mM_{m,n}$ be defined as all ordered pairs $(x_1,x_2) \in \mC^{n + m}$ such that $Cx_1 = x_2$, and the inverse graph be defined as all ordered pairs $(x_2, x_1)$. Then, $\det(I_m + AB) = 0$ if and only if there exists a nonzero $(x_1, x_2) \in \mC^{n+m}$ such that
\[
0 = \begin{bmatrix}I_n & -B \\ A & I_m \end{bmatrix} \begin{bmatrix}x_1 \\ x_2 \end{bmatrix} = \begin{bmatrix}x_1 - Bx_2 \\ Ax_1 + x_2 \end{bmatrix},
\]
where $(x_1, x_2)$ is identified as being a nontrivial element in both the graph of $-A$ and the inverse graph of $B$. Therefore, $\det(I_m + AB) \neq 0$ if and only if the graph of $-A$ and the inverse graph of $B$ only intersect in the origin, i.e., if and only if
\begin{equation}\label{eq:graph_sep}
\range \left( \begin{bmatrix}
I_{n} \\ -A
\end{bmatrix}
\right)
\cap
\range \left( \begin{bmatrix}
B \\ I_{m}
\end{bmatrix} \right) = \{ 0 \},
\end{equation}
where $\range(\cdot)$ denotes the column range of a matrix. A similar condition holds for the stability of a well-defined interconnection of dynamical systems, see, e.g., \cite{foias1993robust, doyle1993parallel}.

In \cite{iwasaki1998well}, it was shown that a necessary and sufficient condition for \eqref{eq:graph_sep} to hold is that there exists a multiplier which achieves quadratic separation.
More precisely,
\eqref{eq:graph_sep} holds 
if and only if there exists a $P \in \mH_{n+m}$ such that
\begin{subequations}\label{eq:multipliers}
\begin{align}
\begin{pmatrix}
I & -A^*
\end{pmatrix} P
\begin{pmatrix}
I \\ -A
\end{pmatrix} & \prec 0 \label{eq:multipliers_A}\\
\begin{pmatrix}
B^* & I
\end{pmatrix} P
\begin{pmatrix}
B \\ I
\end{pmatrix} & \succeq 0. \label{eq:multipliers_B}
\end{align}
\end{subequations}
There are several equivalent forms of this condition. For example, if there exists a $P \in \mH_{n+m}$ such that \eqref{eq:multipliers} holds, due to the strict inequality in \eqref{eq:multipliers_A} this $P$ also satisfies
\begin{subequations}\label{eq:multipliers4}
\begin{align}
\begin{pmatrix}
I & -A^*
\end{pmatrix} P
\begin{pmatrix}
I \\ -A
\end{pmatrix} & \preceq -\varepsilon A^*A \label{eq:multipliers4_A}\\
\begin{pmatrix}
B^* & I
\end{pmatrix} P
\begin{pmatrix}
B \\ I
\end{pmatrix} & \succeq 0, \label{eq:multipliers4_B}
\end{align}
\end{subequations}
for some $\varepsilon > 0$. Moreover, by rewriting the $(2,2)$-block of the block-matrix $P= \begin{bsmallmatrix} P_{11} & P_{12} \\ P_{12}^T & P_{22}\end{bsmallmatrix}$ as $P_{22} = \tilde{P}_{22} - \varepsilon I_m$ in \eqref{eq:multipliers4}, with a slight abuse of notation we see that there exists another matrix $P$ such that
\begin{subequations}\label{eq:multipliers2}
\begin{align}
\begin{pmatrix}
I & -A^*
\end{pmatrix} P
\begin{pmatrix}
I \\ -A
\end{pmatrix} & \preceq 0 \label{eq:multipliers2_A}\\
\begin{pmatrix}
B^* & I
\end{pmatrix} P
\begin{pmatrix}
B \\ I
\end{pmatrix} & \succ 0, \label{eq:multipliers2_B}
\end{align}
\end{subequations}
which in turn implies that
\begin{subequations}\label{eq:multipliers3}
\begin{align}
\begin{pmatrix}
I & -A^*
\end{pmatrix} P
\begin{pmatrix}
I \\ -A
\end{pmatrix} & \preceq 0 \label{eq:multipliers3_A}\\
\begin{pmatrix}
B^* & I
\end{pmatrix} P
\begin{pmatrix}
B \\ I
\end{pmatrix} & \succeq  \varepsilon B^*B, \label{eq:multipliers3_B}
\end{align}
\end{subequations}
for some $\varepsilon > 0$. Finally, by rewriting the $(1,1)$-block of $P$ in \eqref{eq:multipliers3} as $P_{11} = \tilde{P}_{11} + \varepsilon I_n$, we have that the existence of a multiplier fulfilling \eqref{eq:multipliers3} implies that there exists a multiplier fulfilling \eqref{eq:multipliers}.
This shows that the conditions \eqref{eq:multipliers}-\eqref{eq:multipliers3} are equivalent.
Nevertheless, in the step from \eqref{eq:multipliers4} to \eqref{eq:multipliers2}, and from \eqref{eq:multipliers3} to \eqref{eq:multipliers}, the actual multiplier (and hence  also potentially the structure) changes.
Since the results in this paper are concerned with necessary conditions for existence of multipliers of certain
structures, we state all these cases explicitly.
For convenience we summarize the results in the following lemmas.

\begin{lemma}
Let $A \in \mM_{m,n}$ and $B \in \mM_{n,m}$. If there exists a multiplier $P \in \mH_{n+m}$ such that any of the four conditions \eqref{eq:multipliers}-\eqref{eq:multipliers3} is satisfied, then there exists (potentially different) multipliers such that all the other three conditions are also satisfied.
\end{lemma}

\begin{lemma}[\cite{iwasaki1998well}] \label{lem: matrix_stability}
For $A \in \mM_{m,n}$ and $B \in \mM_{n,m}$, the following statements are equivalent:
\begin{enumerate}[(i)]
\item $\det(I_m + AB) \neq 0$;
\item condition \eqref{eq:graph_sep} holds;
\item there exists a matrix $P \in \mH_{n+m}$ such that \eqref{eq:multipliers} holds.
\end{enumerate}
\end{lemma}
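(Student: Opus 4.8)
\medskip
\noindent\emph{Proof plan.}\ The strategy is to lean on two facts already assembled in the discussion preceding the statement. First, (i)~$\Leftrightarrow$~(ii) is immediate from the identity $\det(I_m+AB)=\det\begin{bsmallmatrix} I_n & -B\\ A & I_m\end{bsmallmatrix}$ together with the observation that a nontrivial common element of the two ranges in \eqref{eq:graph_sep} is precisely a nonzero kernel vector of $\begin{bsmallmatrix} I_n & -B\\ A & I_m\end{bsmallmatrix}$. Second, the cyclic chain \eqref{eq:multipliers}~$\Rightarrow$~\eqref{eq:multipliers4}~$\Rightarrow$~\eqref{eq:multipliers2}~$\Rightarrow$~\eqref{eq:multipliers3}~$\Rightarrow$~\eqref{eq:multipliers} derived there shows that the existence of \emph{some} $P$ satisfying \emph{one} of \eqref{eq:multipliers}--\eqref{eq:multipliers3} is equivalent to the existence of \emph{some} $P\in\mH_{n+m}$ satisfying \eqref{eq:multipliers}. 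Hence it only remains to prove that (ii) is equivalent to the existence of $P\in\mH_{n+m}$ satisfying \eqref{eq:multipliers}, which is the result of \cite{iwasaki1998well}; I would reprove it directly as follows.

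For ``such a $P$ exists $\Rightarrow$ (ii)'' I argue by contradiction. Suppose $z\neq 0$ lies in both ranges in \eqref{eq:graph_sep}, say $z=\begin{bsmallmatrix} I\\ -A\end{bsmallmatrix}x_1$ and $z=\begin{bsmallmatrix} B\\ I\end{bsmallmatrix}x_2$; then neither $x_1$ nor $x_2$ can vanish, for otherwise $z=0$. Since $z^*=x_1^*\begin{bsmallmatrix} I & -A^*\end{bsmallmatrix}=x_2^*\begin{bsmallmatrix} B^* & I\end{bsmallmatrix}$, we get
\[
0 > x_1^*\begin{bsmallmatrix} I & -A^*\end{bsmallmatrix}P\begin{bsmallmatrix} I\\ -A\end{bsmallmatrix}x_1 = z^*Pz = x_2^*\begin{bsmallmatrix} B^* & I\end{bsmallmatrix}P\begin{bsmallmatrix} B\\ I\end{bsmallmatrix}x_2 \geq 0,
\]
where the strict inequality uses \eqref{eq:multipliers_A} evaluated at the nonzero vector $x_1$ and the last inequality uses \eqref{eq:multipliers_B}; this contradiction forces \eqref{eq:graph_sep}.

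For the converse the key observation is that $\mathcal{V}:=\range\begin{bsmallmatrix} I_n\\ -A\end{bsmallmatrix}$ and $\mathcal{W}:=\range\begin{bsmallmatrix} B\\ I_m\end{bsmallmatrix}$ have dimensions $n$ and $m$, whose sum is the ambient dimension $n+m$; thus \eqref{eq:graph_sep} says precisely that $\mathcal{V}$ and $\mathcal{W}$ are complementary, $\mC^{n+m}=\mathcal{V}\oplus\mathcal{W}$. Equivalently, the block matrix $T:=\begin{bsmallmatrix} I_n & B\\ -A & I_m\end{bsmallmatrix}$, whose first $n$ columns span $\mathcal{V}$ and whose last $m$ columns span $\mathcal{W}$, is invertible --- which one may also read off from $\det T=\det(I_n+BA)=\det(I_m+AB)\neq 0$, i.e., from (i). I would then simply set $P:=T^{-*}\begin{bsmallmatrix} -I_n & 0\\ 0 & I_m\end{bsmallmatrix}T^{-1}\in\mH_{n+m}$, so that $T^*PT=\begin{bsmallmatrix} -I_n & 0\\ 0 & I_m\end{bsmallmatrix}$. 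Reading off the diagonal blocks of this identity yields $\begin{bsmallmatrix} I & -A^*\end{bsmallmatrix}P\begin{bsmallmatrix} I\\ -A\end{bsmallmatrix}=-I_n\prec 0$ and $\begin{bsmallmatrix} B^* & I\end{bsmallmatrix}P\begin{bsmallmatrix} B\\ I\end{bsmallmatrix}=I_m\succeq 0$, which is exactly \eqref{eq:multipliers}.

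I do not anticipate a genuine obstacle. The only points needing care are the bookkeeping --- verifying that the $(1,1)$- and $(2,2)$-blocks of $T^*PT$ reproduce \emph{exactly} the two quadratic forms in \eqref{eq:multipliers} --- and the remark that the invertibility of $T$ is equivalent to \eqref{eq:graph_sep}, and hence to (i). Conceptually the content is just that two subspaces of complementary dimensions meeting only at the origin are direct summands, after which it suffices to take the Hermitian form that equals $-I$ on the first summand and $+I$ on the second.
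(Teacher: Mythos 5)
Your proof is correct, and all three parts check out: the determinant/kernel argument for (i)$\Leftrightarrow$(ii) and the cyclic chain \eqref{eq:multipliers}$\Rightarrow$\eqref{eq:multipliers4}$\Rightarrow$\eqref{eq:multipliers2}$\Rightarrow$\eqref{eq:multipliers3}$\Rightarrow$\eqref{eq:multipliers} are exactly what the surrounding text of the paper establishes, and your direct-sum construction of $P$ is sound (the two range spaces have complementary dimensions $n$ and $m$, so trivial intersection makes $T=\begin{bsmallmatrix} I_n & B\\ -A & I_m\end{bsmallmatrix}$ invertible, and $T^*PT=\diag(-I_n,I_m)$ reads off precisely the two quadratic forms in \eqref{eq:multipliers}). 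The one point worth noting is that the paper does not actually prove this lemma; it is cited from \cite{iwasaki1998well}, and the construction the paper borrows from that reference (visible in the proof of Proposition~\ref{prop:IQC_nec}) is the perturbed projection $P=\begin{bsmallmatrix} A^*\\ I\end{bsmallmatrix}\begin{bsmallmatrix} A & I\end{bsmallmatrix}-\epsilon I$, for which $\begin{bsmallmatrix} I & -A^*\end{bsmallmatrix}P\begin{bsmallmatrix} I\\ -A\end{bsmallmatrix}=-\epsilon(I+A^*A)\prec 0$ and $\begin{bsmallmatrix} B^* & I\end{bsmallmatrix}P\begin{bsmallmatrix} B\\ I\end{bsmallmatrix}=(I+AB)^*(I+AB)-\epsilon(I+B^*B)\succeq 0$ for $\epsilon$ small enough. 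Your congruence-based multiplier is a genuinely different (and arguably more geometric) choice: it makes the separation exact ($-I_n$ on one graph, $+I_m$ on the other) at the cost of inverting $T$, whereas the Iwasaki--Hara multiplier is explicit in $A$ alone but requires choosing $\epsilon$ small relative to $\inf\,\lambda\bigl((I+AB)^*(I+AB)\bigr)$, which is what makes it the more convenient choice when the construction must be carried out frequency-wise with a uniform $\epsilon$, as in Proposition~\ref{prop:IQC_nec}. Either construction completes the lemma.
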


\subsection{LTI systems and multipliers for feedback stability}\label{sec:background_LTI}

Next, we consider extensions of the aforementioned results to LTI systems. To this end, let us first introduce the function spaces needed (see, e.g., \cite{zhou1996robust} or \cite{vinnicombe2001uncertainty} for more details). Let $\|\cdot\|_2$ denote the matrix 2-norm, and let $\esssup$ denote the essential supremum of a function. Define the Lebesgue space
\begin{align*}
  \Linfty^{m \times n} & \!:=\! \left\{ \! \phi : j\mR \rightarrow \mM_{m,n} \left| \|\phi\|_\infty := \esssup_{\omega \in \mR} \|\phi(j\omega)\|_2 < \infty \!
    \right. \right\}
\end{align*}
and the Hardy space
\begin{align*}
  \Hinfty^{m \times n} & \!:=\! \left\{ \! \phi \in \Linfty^{m \times n} \left|
      \begin{array}{l} \phi \text{ has analytic continuation}\\
        \text{into } \mC_{+} \text{ with } \textstyle\esssup_{s \in \mC_{+}} \|\phi(s)\|_2 \\
        = \esssup_{\omega \in \mR} \|\phi(j\omega)\|_2 < \infty \end{array} \right. \!\!\!\!\right\} \! .
\end{align*}
The latter is the space of all stable transfer functions.
Denote by $\RR^{m \times n}$ the set of $m \times n$ real-rational proper transfer function matrices, and let $\RR\Hinfty^{m \times n} := \RR^{m \times n} \cap \Hinfty^{m \times n}$, i.e., the subset of $\RR^{m \times n}$ with no poles in the closed right-half complex plane. A $G \in \RR\Hinfty^{n \times n}$ is said to be passive if $G(j\omega) + G(j\omega)^* \succeq 0$ for all $\omega \in \mR$, and it is said to be output strictly passive if there exists $\epsilon > 0$ such that $G(j\omega) + G(j\omega)^* \succeq \epsilon G(j\omega)^* G(j\omega)$ for all $\omega \in \mR$.

Next, consider $G \in \RR\Hinfty^{m \times n}$ and $K \in \RR\Hinfty^{n \times m}$. Akin to the matrix setting, the (negative) feedback interconnection of $G$ and $K$ is said to be
stable if $(I + GK)^{-1} \in \RR\Hinfty^{m \times m}$. The following sufficient conditions for feedback stability are significantly important --- the first part of the result is well known whereas the second is less so.

\begin{proposition} \label{prop:IQC_suff}
Let $G \in \RR\Hinfty^{m \times n}$ and $K \in \RR\Hinfty^{n \times m}$. Then $(I + GK)^{-1} \in \RR\Hinfty^{m \times m}$ if there exists $\Pi
= [\begin{smallmatrix} \Pi_{11} & \Pi_{12} \\ \Pi_{21} & \Pi_{22} \end{smallmatrix}] \in
\Linfty^{(n + m) \times (n + m)}$ such that for all $\omega \in [0, \infty]$, $\Pi(j\omega)^* = \Pi(j\omega)$, $\Pi_{11}(j\omega) \preceq  0$,
$\Pi_{22}(j\omega) \succeq 0$,
\begin{align} \label{eq: IQC_suff}
  \begin{split}
\begin{pmatrix}
I & -G(j\omega)^*
\end{pmatrix} \Pi(j\omega)
\begin{pmatrix}
I \\ -G(j\omega)
\end{pmatrix} & \prec 0 \\
\begin{pmatrix}
K(j\omega)^* & I
\end{pmatrix} \Pi(j\omega)
\begin{pmatrix}
K(j\omega) \\ I
\end{pmatrix} & \succeq 0,
\end{split}
\end{align}
or equivalently,
\begin{align*}
\begin{pmatrix}
I & -G(j\omega)^*
\end{pmatrix} \Pi(j\omega)
\begin{pmatrix}
I \\ -G(j\omega)
\end{pmatrix} & \preceq 0 \\
\begin{pmatrix}
K(j\omega)^* & I
\end{pmatrix} \Pi(j\omega)
\begin{pmatrix}
K(j\omega) \\ I
\end{pmatrix} & \succeq \epsilon K(j\omega)^*K(j\omega)
\end{align*}
for some $\epsilon > 0$. Furthermore, if $m = n$ and $G^{-1}, K^{-1} \in \RR\Hinfty^{m \times m}$, then $(I + GK)^{-1} \in \RR\Hinfty^{m \times m}$ if there exists $\Pi \in
\Linfty^{(2m) \times (2m)}$ such that for all $\omega \in [0, \infty]$, $\Pi(j\omega)^* = \Pi(j\omega)$, $\Pi_{11}(j\omega) \succeq  0$,
$\Pi_{22}(j\omega) \preceq 0$, and \eqref{eq: IQC_suff} holds.
\end{proposition}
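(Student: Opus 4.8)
\emph{Proof idea.} The plan is to prove the first statement by combining the frequency‑wise matrix result of Lemma~\ref{lem: matrix_stability} with a homotopy (Nyquist / argument‑principle) argument, and then to obtain the second statement by applying the first to the inverted pair $(K^{-1},G^{-1})$ with a multiplier whose diagonal blocks have had their signs flipped.

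\emph{First statement.} For $\tau\in[0,1]$ set $G_\tau:=\tau G\in\RR\Hinfty^{m\times n}$; note $(I+G_0K)^{-1}=I$. The point is the elementary identity (the argument $j\omega$ suppressed)
\begin{multline*}
\begin{pmatrix} I & -\tau G^*\end{pmatrix}\Pi\begin{pmatrix} I \\ -\tau G\end{pmatrix}\\
= (1-\tau)\Pi_{11} + \tau\begin{pmatrix} I & -G^*\end{pmatrix}\Pi\begin{pmatrix} I \\ -G\end{pmatrix} - \tau(1-\tau)G^*\Pi_{22}G .
\end{multline*}
Since $\Pi_{11}(j\omega)\preceq0$, $\Pi_{22}(j\omega)\succeq0$ and \eqref{eq: IQC_suff} holds, every term on the right is $\preceq0$, and the middle one is $\prec0$ when $\tau>0$; together with the unchanged inequality $\begin{pmatrix}K^* & I\end{pmatrix}\Pi\begin{pmatrix}K\\I\end{pmatrix}\succeq0$, this says that for all $\tau\in(0,1]$ the matrix $P=\Pi(j\omega)$ satisfies \eqref{eq:multipliers} (or, under the second, equivalent hypothesis, \eqref{eq:multipliers3}) with $A=\tau G(j\omega)$, $B=K(j\omega)$. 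By Lemma~\ref{lem: matrix_stability}, $\det\big(I+\tau G(j\omega)K(j\omega)\big)\neq 0$ for all $\tau\in[0,1]$ and all $\omega\in[0,\infty]$, hence on all of $j\mR\cup\{\infty\}$ by conjugate symmetry of real‑rational transfer functions. Since $G,K\in\RR\Hinfty$, for each $\tau$ the map $s\mapsto\det(I+\tau G(s)K(s))$ is real‑rational, analytic on $\mC_+$ and nonvanishing on $j\mR\cup\{\infty\}$, and it varies continuously with $\tau$; by the argument principle the number of its zeros in $\mC_+$ is a continuous, integer‑valued function of $\tau\in[0,1]$, hence constant, and it equals $0$ at $\tau=0$. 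Thus $\det(I+GK)$ has no zeros in the closed right half‑plane and $\det(I+G(\infty)K(\infty))\neq0$, so $\det(I+GK)$ is a unit of $\RR\Hinfty$ and $(I+GK)^{-1}=\operatorname{adj}(I+GK)/\det(I+GK)\in\RR\Hinfty^{m\times m}$. (The equivalence of the two hypotheses is the frequency‑wise version of the reduction \eqref{eq:multipliers3}$\Rightarrow$\eqref{eq:multipliers} recorded before Lemma~\ref{lem: matrix_stability}; adding or subtracting $\epsilon I$ from the diagonal blocks of $\Pi$ preserves the sign conditions on $\Pi_{11}$ and $\Pi_{22}$.)

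\emph{Second statement.} Since $G^{-1},K^{-1}\in\RR\Hinfty^{m\times m}$, the identities $I+GK=GK\,(I+K^{-1}G^{-1})$ and $I+K^{-1}G^{-1}=(GK)^{-1}(I+GK)$ give $(I+GK)^{-1}=(I+K^{-1}G^{-1})^{-1}K^{-1}G^{-1}$ and $(I+K^{-1}G^{-1})^{-1}=(I+GK)^{-1}GK$, so $(I+GK)^{-1}\in\RR\Hinfty^{m\times m}$ if and only if $(I+K^{-1}G^{-1})^{-1}\in\RR\Hinfty^{m\times m}$. I would then apply the first statement to $\tilde G:=K^{-1}$, $\tilde K:=G^{-1}$ with the multiplier
\[
\tilde\Pi:=\begin{pmatrix}-\Pi_{11} & \Pi_{12}\\ \Pi_{21} & -\Pi_{22}\end{pmatrix}\in\Linfty^{(2m)\times(2m)} ,
\]
obtained from $\Pi$ by flipping the signs of the diagonal blocks. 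Then $\tilde\Pi^*=\tilde\Pi$, and the hypotheses $\Pi_{11}\succeq0$, $\Pi_{22}\preceq0$ become $\tilde\Pi_{11}\preceq0$, $\tilde\Pi_{22}\succeq0$, as required. Moreover a congruence of $\begin{pmatrix}K^* & I\end{pmatrix}\Pi\begin{pmatrix}K\\I\end{pmatrix}$ by $K(j\omega)^{-1}$, followed by a sign change, produces $\begin{pmatrix}I & -\tilde G^*\end{pmatrix}\tilde\Pi\begin{pmatrix}I\\-\tilde G\end{pmatrix}\preceq 0$, while a congruence of $\begin{pmatrix}I & -G^*\end{pmatrix}\Pi\begin{pmatrix}I\\-G\end{pmatrix}$ by $G(j\omega)^{-1}$, followed by a sign change, produces $\begin{pmatrix}\tilde K^* & I\end{pmatrix}\tilde\Pi\begin{pmatrix}\tilde K\\I\end{pmatrix}\succ 0$; that is, $\tilde\Pi(j\omega)$ satisfies \eqref{eq:multipliers2} for $(\tilde G,\tilde K)$, one of the equivalent forms covered by Lemma~\ref{lem: matrix_stability}. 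Running the homotopy of the first statement with this $\tilde\Pi$ (the same identity gives $\begin{pmatrix}I & -\tau\tilde G^*\end{pmatrix}\tilde\Pi\begin{pmatrix}I\\-\tau\tilde G\end{pmatrix}\preceq0$ for all $\tau\in[0,1]$) yields $(I+K^{-1}G^{-1})^{-1}\in\RR\Hinfty^{m\times m}$, and hence $(I+GK)^{-1}\in\RR\Hinfty^{m\times m}$.

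\emph{Main obstacle.} The delicate part is the homotopy step of the first statement: turning the family of pointwise‑in‑$\omega$ nonsingularity facts into the global conclusion $(I+GK)^{-1}\in\RR\Hinfty$. This needs the usual Nyquist bookkeeping---joint continuity of $(\tau,\omega)\mapsto\det(I+\tau G(j\omega)K(j\omega))$ on the compactified imaginary axis, its non‑vanishing there for every $\tau$, and homotopy invariance of the encirclement count---and it is the only place where the structural sign conditions $\Pi_{11}\preceq0$, $\Pi_{22}\succeq0$ are genuinely used: they are exactly what makes the scaled quadratic‑separation inequality survive the deformation $G\mapsto\tau G$. The frequency‑wise step is then immediate from Lemma~\ref{lem: matrix_stability}, and the reduction of the second statement to the first is routine algebra.
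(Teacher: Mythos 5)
Your proof of the first statement is essentially the paper's: a convexity identity showing that the quadratic separation survives the deformation of one open-loop element toward zero, followed by a frequency-wise application of Lemma~\ref{lem: matrix_stability} and a continuity (Nyquist/argument-principle) argument to pass from pointwise nonsingularity on $j\mR\cup\{\infty\}$ to invertibility in $\RR\Hinfty$. The only cosmetic difference is that you scale $G$ (using $\Pi_{11}\preceq 0$, $\Pi_{22}\succeq 0$ to preserve the $G$-side inequality) whereas the paper scales $K$ (using them to preserve the $K$-side inequality); both homotopies collapse the loop to a trivially stable configuration. For the second statement your route is genuinely different and arguably more self-contained: the paper keeps the multiplier fixed, observes that $\Pi_{11}\succeq0$, $\Pi_{22}\preceq0$ make the $K$-side inequality survive the scaling $\homoparam\geq 1$, and anchors the homotopy at large $\homoparam$ via the large-gain theorem of \cite{zahedzadeh2008input}; you instead invert the loop, $(I+GK)^{-1}\in\RR\Hinfty \Leftrightarrow (I+K^{-1}G^{-1})^{-1}\in\RR\Hinfty$, and check that congruence by $K^{-1}$ resp.\ $G^{-1}$ together with a sign flip of the diagonal blocks turns $\Pi$ into a multiplier $\tilde\Pi$ with $\tilde\Pi_{11}\preceq0$, $\tilde\Pi_{22}\succeq0$ that separates the graphs of $(K^{-1},G^{-1})$ in the form \eqref{eq:multipliers2}, after which the first statement's homotopy applies. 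Your computation checks out: the two quadratic forms for $(K^{-1},G^{-1},\tilde\Pi)$ are exactly $-K^{-*}(\cdot)K^{-1}$ and $-G^{-*}(\cdot)G^{-1}$ applied to the two forms for $(G,K,\Pi)$, so the strictness migrates from the $G$-inequality to the $K$-inequality, which is harmless since Lemma~\ref{lem: matrix_stability} covers both \eqref{eq:multipliers} and \eqref{eq:multipliers2} and your scaling identity preserves the non-strict inequality. What this buys you is independence from the large-gain theorem; what the paper's version buys is that it never needs to manipulate the multiplier or the plants. I see no gaps in either half.
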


\begin{proof}
  If $\Pi_{11}(j\omega) \preceq 0$ and $\Pi_{22}(j\omega) \succeq 0$, then
  \[
\begin{pmatrix}
K(j\omega)^* & I
\end{pmatrix} \Pi(j\omega)
\begin{pmatrix}
K(j\omega) \\ I
\end{pmatrix} \succeq 0
\]
is equivalent to
  \[
\begin{pmatrix}
\homoparam K(j\omega)^* & I
\end{pmatrix} \Pi(j\omega)
\begin{pmatrix}
\homoparam K(j\omega) \\ I
\end{pmatrix} \succeq 0
\]
for all $\homoparam \in [0, 1]$. Feedback stability can then be established using the Parseval-Plancherel theorem as in \cite[Thm. 3.1]{megretski1993power}
and the theory of integral quadratic constraints~\cite[Thm. 1]{megretski1997system} or \cite[Cor. IV.3]{khong2021iqc}, where the proofs are written
purely in the time domain. An alternative, more direct frequency-domain proof is provided below for completeness.

By applying Lemma~\ref{lem: matrix_stability} frequency-wise, it holds that $\det(I + \homoparam G(j\omega)K(j\omega)) \neq 0$ for all
$\omega \in [0, \infty]$, $\homoparam \in [0, 1]$. It remains to show that $\det(I + \homoparam G(s)K(s)) \neq 0$ for all $s \in \mC_+$, from which
$(I + GK)^{-1} \in \RR\Hinfty^{m \times m}$ follows. To this end, observe that since $GK \in \RR\Hinfty^{m \times m}$,
$\det(I + \homoparam G(s)K(s)) \neq 0$ for all $s \in \mC_+$ for sufficiently small $\homoparam > 0$. Suppose to the contrapositive that $\det(I + G(s)K(s)) = 0$
for some $s \in \mC_+$. Then, by the continuity of the locations of the zeros of $\det(I + \homoparam G(s)K(s))$ in $\homoparam$, there must exist
an $\homoparam \in (0, 1)$ and an $\omega \in [0, \infty]$ such that $\det(I + \homoparam G(j\omega)K(j\omega)) = 0$, leading to a contradiction. Therefore, it must be
true that $\det(I + \homoparam G(s)K(s)) \neq 0$ for all $s \in \mC_+$ and $\homoparam \in [0, 1]$.

On the other hand, if $\Pi_{11}(j\omega) \succeq 0$ and $\Pi_{22}(j\omega) \preceq 0$, then
 \[
\begin{pmatrix}
K(j\omega)^* & I
\end{pmatrix} \Pi(j\omega)
\begin{pmatrix}
K(j\omega) \\ I
\end{pmatrix} \succeq 0
\]
is equivalent to
  \[
\begin{pmatrix}
\homoparam K(j\omega)^* & I
\end{pmatrix} \Pi(j\omega)
\begin{pmatrix}
\homoparam K(j\omega) \\ I
\end{pmatrix} \succeq 0
\]
for all $\homoparam \geq 1$. Since $G^{-1}, K^{-1} \in \RR\Hinfty^{m \times m}$, by the large gain theorem~\cite[Thm. 4.1]{zahedzadeh2008input},
$(I + \homoparam GK)^{-1} \in \RR\Hinfty^{m \times m}$ for sufficiently large $\homoparam \geq 1$. By repeating the preceding arguments, one may then establish
that $\det(I + \homoparam G(s)K(s)) \neq 0$ for all $s \in \mC_+$ and $\homoparam \geq 1$, from which $(I + GK)^{-1} \in \RR\Hinfty^{m \times m}$ follows.
\end{proof}

  \begin{remark} \label{rem:IQC_suff}
Proposition~\ref{prop:IQC_suff} remains true when all the inequality signs therein are flipped.
  \end{remark}
  
  The following necessary condition for feedback stability, complementing the sufficient condition in Proposition~\ref{prop:IQC_suff}, can be proved by using a construction from \cite{iwasaki1998well}.

\begin{proposition} \label{prop:IQC_nec}
Let $G \in \RR\Hinfty^{m \times n}$ and $K \in \RR\Hinfty^{n \times m}$. Then $(I + GK)^{-1} \in \RR\Hinfty^{m \times m}$ only if there exists $\Pi \in
\Linfty^{(n + m) \times (n + m)}$ such that for all $\omega \in [0, \infty]$, $\Pi(j\omega)^* = \Pi(j\omega)$, 
\begin{align*}
\begin{pmatrix}
I & -G(j\omega)^*
\end{pmatrix} \Pi(j\omega)
\begin{pmatrix}
I \\ -G(j\omega)
\end{pmatrix} & \prec 0 \\
\begin{pmatrix}
K(j\omega)^* & I
\end{pmatrix} \Pi(j\omega)
\begin{pmatrix}
K(j\omega) \\ I
\end{pmatrix} & \succeq 0. 
\end{align*}
\end{proposition}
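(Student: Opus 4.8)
The plan is to produce $\Pi$ by an explicit closed-form construction, namely the frequency-domain lift of the multiplier used in the proof of Lemma~\ref{lem: matrix_stability} (the construction of~\cite{iwasaki1998well}). The key object is the loop matrix
\[
W(s) := \begin{pmatrix} I_n & K(s) \\ -G(s) & I_m \end{pmatrix},
\]
which lies in $\RR\Hinfty^{(n+m)\times(n+m)}$ since $G,K\in\RR\Hinfty$. First I would show that feedback stability forces $W^{-1}\in\RR\Hinfty^{(n+m)\times(n+m)}$. Block-inverting $W$ with respect to its $(1,1)$ block yields entries built from $G$, $K$ and $(I_m+GK)^{-1}$ (together with $(I_n+KG)^{-1}$, which equals $I_n-K(I_m+GK)^{-1}G$ by the push-through identity and is therefore also in $\RR\Hinfty^{n\times n}$); hence $W^{-1}$ has no poles in the closed right half-plane. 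It is moreover proper because $\det W(\infty)=\det(I_m+G(\infty)K(\infty))\neq 0$, a consequence of $(I_m+GK)^{-1}$ being proper. I expect this to be the one delicate step; once it is in place the rest is bookkeeping.

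Next I would put $J:=\diag(-I_n,\,I_m)$ and define
\[
\Pi(j\omega) := \big(W(j\omega)^{-1}\big)^*\, J\, W(j\omega)^{-1}, \qquad \omega\in[0,\infty].
\]
Since $W^{-1}\in\RR\Hinfty$ its boundary values are essentially bounded, so $\Pi\in\Linfty^{(n+m)\times(n+m)}$, and $\Pi(j\omega)^*=\Pi(j\omega)$ because $J$ is Hermitian. Note that no sign condition on the diagonal blocks of $\Pi$ is demanded by the statement, so this $\Pi$, with no prescribed structure, is admissible.

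Finally I would verify the two inequalities in one stroke. The first $n$ block-columns of $W(j\omega)$ are $\begin{bsmallmatrix} I \\ -G(j\omega)\end{bsmallmatrix}$ and the last $m$ are $\begin{bsmallmatrix} K(j\omega) \\ I\end{bsmallmatrix}$, so $W(j\omega)^{-1}\begin{bsmallmatrix} I \\ -G(j\omega)\end{bsmallmatrix}=\begin{bsmallmatrix} I_n \\ 0\end{bsmallmatrix}$ and $W(j\omega)^{-1}\begin{bsmallmatrix} K(j\omega) \\ I\end{bsmallmatrix}=\begin{bsmallmatrix} 0 \\ I_m\end{bsmallmatrix}$. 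Substituting, and using $\begin{bsmallmatrix} I & -G(j\omega)^*\end{bsmallmatrix}=\begin{bsmallmatrix} I \\ -G(j\omega)\end{bsmallmatrix}^*$, gives
\[
\begin{pmatrix} I & -G(j\omega)^*\end{pmatrix}\Pi(j\omega)\begin{pmatrix} I \\ -G(j\omega)\end{pmatrix}=\begin{pmatrix} I_n & 0\end{pmatrix}J\begin{pmatrix} I_n \\ 0\end{pmatrix}=-I_n\prec 0,
\]
and likewise the $K$-quadratic form equals $I_m\succeq 0$, for every $\omega\in[0,\infty]$ --- exactly the claim. An alternative would be to apply Lemma~\ref{lem: matrix_stability} pointwise in $\omega$ to obtain some $P(\omega)$ at each frequency, but then one would still have to assemble these into an $\Linfty$ function; the explicit formula above does this automatically, which is why I prefer it.
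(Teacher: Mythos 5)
Your construction is correct, but it is not the one the paper uses, so let me compare the two. The paper's proof takes the multiplier directly from \cite[Cor.~1]{iwasaki1998well} frequency-wise, namely $\Pi(j\omega) = \begin{bsmallmatrix} G(j\omega)^* \\ I \end{bsmallmatrix}\begin{bsmallmatrix} G(j\omega) & I \end{bsmallmatrix} - \epsilon I$: the first quadratic form then collapses to $-\epsilon(I+G^*G) \prec 0$ because $\begin{bsmallmatrix} G & I\end{bsmallmatrix}\begin{bsmallmatrix} I \\ -G\end{bsmallmatrix} = 0$, and the second equals $(I+GK)^*(I+GK) - \epsilon(I+K^*K)$, which is $\succeq 0$ for a single sufficiently small $\epsilon>0$ precisely because $(I+GK)^{-1}\in\RR\Hinfty$ gives a uniform-in-$\omega$ lower bound on $\sigma_{\min}(I+G(j\omega)K(j\omega))$ while $\|K\|_\infty<\infty$ bounds $I+K^*K$ above. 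Your route instead inverts the loop matrix $W = \begin{bsmallmatrix} I & K \\ -G & I\end{bsmallmatrix}$ and sets $\Pi = W^{-*}\diag(-I,I)W^{-1}$; the burden shifts from choosing a uniform $\epsilon$ to establishing $W^{-1}\in\RR\Hinfty$, which your block-inversion/push-through argument handles correctly (all four blocks are built from $G$, $K$, $(I+GK)^{-1}$, and properness is inherited). What each buys: the paper's multiplier is obtained with no matrix inversion and makes the dependence on the data transparent (its structure is rank-$m$ plus a small negative shift), whereas yours makes the verification of the two inequalities purely algebraic ($-I_n$ and $+I_m$ on the nose, so you even get the second inequality strict, more than the statement asks) at the cost of the gang-of-four stability step. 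Both yield a continuous, hence $\Linfty$, Hermitian-valued $\Pi$ on $[0,\infty]$, so your proof is a valid alternative.
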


\begin{proof}
That $(I + GK)^{-1} \in \RR\Hinfty^{m \times m}$ implies that $\inf_{\omega \in \mR}|\det(I + K(j\omega) G(j\omega))|^2 > 0$ for all $\omega \in
[0, \infty]$. Following the proof in~\cite[Cor. 1]{{iwasaki1998well}} frequency-wise, define
\[
\Pi(j\omega) := \begin{pmatrix}
G(j\omega)^* \\ I
\end{pmatrix}
\begin{pmatrix}
G(j\omega) & I
\end{pmatrix} - \epsilon I.
\]
The claim may then be verified to hold for sufficiently small $\epsilon > 0$.
\end{proof}

\subsection{Sectorial matrices and matrix phases}\label{sec:background_phase}
The numerical range, also called the field of values, of a matrix $A \in \mM_n$ is defined as
\[
W(A) := \big\{ z \in \mC \mid z = x^*Ax, \; x \in \mC^n, \; \| x \|^2 := x^*x = 1 \big\}.
\]
By the Toeplitz-Hausdorff theorem, for any $A \in \mM_n$ the numerical range $W(A)$ is a compact convex subset of $\mC$, see, e.g,. \cite[Property 1.2.1 and 1.2.2]{horn1994topics}, \cite[Thm.~4.1]{zhang2011matrix}, or \cite[Thm.~1.1-2]{gustafson1997numerical}.
Moreover, the numerical range of a matrix always contains its eigenvalues \cite[Property 1.2.6]{horn1994topics}.
Next, the conic hull of $W(A)$, i.e., the smallest convex cone that contains the numerical range, is given by the set
\[
W'(A) := \big\{ z \in \mC \mid z = x^*Ax, \; x \in \mC^n, \, x \neq 0 \big\},
\]
which is called the angular numerical range \cite[Def. 1.1.2]{horn1994topics}.
In particular, by the convexity of $W(A)$ it follows that if $0 \not \in W(A)$, then $ W(A)$ is contained in an open half-plane and hence the opening angle
of $W'(A)$ is strictly less than $\pi$ --- such matrices are called \emph{sectorial}.
If $0 \not \in \intr W(A)$, i.e., not in the interior, then $ W(A)$ is contained in a closed half-plane and hence the opening angle of $W'(A)$ is less than or equal to $\pi$ --- such matrices are called \emph{semi-sectorial}. If $0 \in \intr W(A)$, then $W'(A) = \mC$, and the opening angle is defined to be $2\pi$ (cf.~\cite[Def. 1.1.3]{horn1994topics}). Clearly, all sectorial matrices are also semi-sectorial. However, there exist
matrices that are not sectorial but for which the opening angle of the angular numerical range is strictly less than $\pi$; see Remark~\ref{rem:quasi-sectorial} for details.
In light of this, we define
the set of \emph{quasi-sectorial} matrices
as the set of all semi-sectorial matrix $A$ with opening angle of $W'(A)$ strictly less than $\pi$.
This definition gives the (strict) inclusions
\[
\text{sectorial } \subset \text{ quasi-sectorial } \subset \text{ semi-sectorial.}
\]
Finally, an important subset of sectorial matrices is the set of \emph{strictly accretive} matrices, which is defined as
\[
\mA_n := \{ A \in \mM_n \mid A + A^* \succ 0 \}.
\]
The closure of this set is the set of accretive matrices, i.e., the set of all matrices $ A \in \mM_n$ such that $A + A^* \succeq 0$, which is a subset of the semi-sectorial matrices.
In relation to this, we also define a matrix to be \emph{quasi-strictly accretive} if it is accretive and quasi-sectorial (cf.~Remark~\ref{rem:quasi-sectorial}).

All sectorial matrices can be diagonalized by congruence \cite{deprima1974range, furtado2001spectral, johnson2001generalization, zhang2015matrix}. More specifically, any sectorial matrix $A$ can be written as $A = T^*DT$, where $T \in \gln$ and where $D \in \mU_n$ is diagonal. This is called the \emph{sectorial factorization} \cite{zhang2015matrix}, and the matrix $D$ is unique up to ordering of the diagonal elements \cite{johnson2001generalization, zhang2015matrix}. Based on this factorization, following \cite{wang2020phases} we define the phases of a sectorial matrix to be the phases of the eigenvalues of $D$, and denote them by 
\[
\phi(A) = \begin{bmatrix}
\phi_1(A), & \phi_2(A), & \ldots, & \phi_n(A)
\end{bmatrix}^T.
\]
Each phase is only defined modulo $2\pi$, but by convention we sort them nonincreasingly, i.e., as
\[
\phimax(A) := \phi_1(A) \geq \phi_2(A) \geq \cdots \geq \phi_n(A) =: \phimin(A),
\]
and define them so that $\phimax(A) - \phimin(A) < \pi$. With this convention, we can for example see that that the phases of a sectorial matrix are invariant under congruence transformations, and that strictly accretive matrices are sectorial matrices with phases contained in $(-\pi/2, \pi/2)$ modulo $2\pi$.
The phases of a sectorial matrix has many nice properties, and can for example be used to guarantee that a matrix of the form $I + AB$ is of full rank; for an in-depth treatment of matrix phases we refer the reader to \cite{wang2020phases}. Moreover, the definition of phases can be extended to all semi-sectorial matrices; for the extension to quasi-sectorial matrices see Remark~\ref{rem:quasi-sectorial} below, and for the extension in the general case see \cite{furtado2003spectral, chen2024phase, wang2023phases}
for details. In any case, we still use $\phimax(A)$ and $\phimin(A)$ to denote the larges and smallest phase, respectively.

\begin{remark}\label{rem:quasi-sectorial}
Since the eigenvalues of a matrix are contained in its numerical range, any sectorial matrix must be full rank. The set of quasi-sectorial matrices extends the sectorial matrices to the set of matrices $A$ for which the opening angle of $W'(A)$ is strictly less than $\pi$, but that are not necessarily of full rank. In particular, let $A \in \mM_n$ be a quasi-sectorial but not sectorial matrix. Then the origin must be a sharp point on $\bdry W(A)$, i.e., the boundary of $W(A)$. This implies that $0$ is a normal eigenvalue of $A$, and that there exists a $U \in \mU_n$ such that
\[
A = U \begin{bmatrix}
0 & 0 \\
0 & \tilde{A}
\end{bmatrix} U^*
\]
where $\tilde{A}$ is sectorial and $\rank(\tilde{A}) = \rank(A)$
\cite[Thm.~1.6.6]{horn1994topics}.
The phases of a quasi-sectorial matrix is hence defined as the phases of $\tilde{A}$, and quasi-sectorial matrix thus have between $1$ and $n$ phases.
\end{remark}

\subsubsection*{The use of phases in MIMO LTI systems}
The concepts of magnitude and phase are well-established in the context of single-input-single-output LTI systems, and they both constitute highly useful and complementary
tools. However, while the concept of system gain has a generally accepted and useful generalization to MIMO LTI systems, including small-gain theorems for robust stability, the concept of phase has attracted much less attention. Early works trying to establish definitions of phases with useful properties in the MIMO setting can be found in, e.g.,
\cite{postlethwaite1981principal, owens1984numerical, anderson1988hilbert, haddad1992there, chen1998multivariable}. Recently, there has been a renewed
interest in the concept of phases for MIMO systems, both for LTI systems
\cite{chen2019phase, chen2024phase, mao2021phase}
and for nonlinear systems
\cite{chen2020phase}, with small-phase theorems for robust stability as a result. This concept of phases for MIMO LTI systems builds on the concept of matrix phases \cite{wang2020phases}, as introduced above, and can also be seen as a quantitive generalization of passive and negative imaginary systems \cite{chen2024phase}.
As will be seen below it is also connected to quadratic graph separation. In fact, this notion of phase turns out to be, in some sense, the correct notion in order to guarantee robust stability against certain types of magnitudinal uncertainties (see Section~\ref{sec:phase_multipliers}).

\section{Multipliers of phase type}\label{sec:phase_multipliers}
In this section we investigate the necessity of certain multipliers of phase type for robust stability of feedback interconnections with respect to magnitudinal uncertainties. In particular, we first show that $I + AB$ is nonsingular for magnitude scaling and certain congruence transformations, respectively, only if there exists certain types of phasal multipliers. The results are then extended to MIMO LTI systems.

\subsection{Multipliers for stability under scaling uncertainty}
One of the simplest forms of uncertainty is an uncertainty in the scaling of one of the matrices. In order to guarantee that the interconnection is stable for all scalings, it would therefore be desirable to show that $I + \tau AB$ is nonsingular for all $\tau \in \mR_{+}$. For $A,B \in \gln$, that is equivalent to that $\lambda(AB) \cap \mR_{-} = \emptyset$, i.e., that the intersection is empty, and necessary and sufficient conditions for the latter is given in the following proposition.

\begin{proposition}\label{prop:H_sect}
Given $A,B \in \gln$, there exists an $H \in \gln$ such that $HA$ and $H^*B$ are strictly accretive if and only if $\lambda(AB) \cap \mR_{-} = \emptyset$.
\end{proposition}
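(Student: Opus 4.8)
The plan is to prove the two implications separately. Necessity (the ``only if'') is the easier part and reduces to an elementary fact about products of strictly accretive matrices; sufficiency (the ``if'') is where the real work lies, and I will attack it by an explicit construction based on a Schur form together with a diagonal scaling.

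\emph{Necessity.} Suppose $H\in\gln$ makes $C:=HA$ and $D:=H^{*}B$ strictly accretive. Then $A=H^{-1}C$ and $B=(H^{-1})^{*}D$, so $AB=H^{-1}C(H^{-1})^{*}D$, and a cyclic permutation of the eigenvalues gives $\lambda(AB)=\lambda\big(C\cdot(H^{-1})^{*}D\,H^{-1}\big)$. Since $D+D^{*}\succ0$ and strict accretivity is preserved under congruence, $\tilde D:=(H^{-1})^{*}D\,H^{-1}$ is strictly accretive as well. Thus it suffices to show the elementary fact that \emph{if $C$ and $\tilde D$ are strictly accretive then $\lambda(C\tilde D)\cap\mR_{-}=\emptyset$}. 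Indeed $C\tilde D$ is invertible, so $0\notin\lambda(C\tilde D)$; and if $C\tilde D x=\mu x$ with $x\neq0$ and $\mu<0$, set $y:=\tilde D x\neq0$ and compute $y^{*}Cy=x^{*}\tilde D^{*}C\tilde D x=\mu\,x^{*}\tilde D^{*}x=\mu\,\overline{x^{*}\tilde D x}$. The left-hand side has strictly positive real part because $C+C^{*}\succ0$, while $x^{*}\tilde D x$ has strictly positive real part because $\tilde D+\tilde D^{*}\succ0$, so $\mu\,\overline{x^{*}\tilde D x}$ has strictly negative real part when $\mu<0$ --- a contradiction.

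\emph{Sufficiency.} Assume $\lambda(AB)\cap\mR_{-}=\emptyset$ and write $M:=AB\in\gln$. I will first reduce to the task of finding a strictly accretive $W$ with $WM^{-1}$ also strictly accretive: given such a $W$, put $H:=B^{-*}WM^{-1}\in\gln$; then $HA=B^{-*}WB^{-1}$, a congruence of $W$, and $H^{*}B=(WM^{-1})^{*}$, so both are strictly accretive. To build $W$, take a Schur decomposition $M=STS^{-1}$ with $S$ invertible and $T$ upper triangular, whose diagonal entries are the eigenvalues $\mu_{1},\dots,\mu_{n}$ of $M$; write $\mu_{i}=r_{i}e^{\mathrm{i}\theta_{i}}$ with $r_{i}>0$ and --- crucially --- $\theta_{i}\in(-\pi,\pi)$ by hypothesis. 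Conjugating by $E_{\varepsilon}:=\diag(1,\varepsilon,\dots,\varepsilon^{n-1})$ multiplies the strictly-upper $(i,j)$ entry of $T$ by $\varepsilon^{\,j-i}$, so $T_{\varepsilon}:=E_{\varepsilon}^{-1}TE_{\varepsilon}\to\Lambda:=\diag(\mu_{1},\dots,\mu_{n})$ and hence $T_{\varepsilon}^{-1}\to\Lambda^{-1}$ as $\varepsilon\to0^{+}$. Now set $W_{0}:=\diag(e^{\mathrm{i}\theta_{1}/2},\dots,e^{\mathrm{i}\theta_{n}/2})$; since $\theta_{i}/2\in(-\pi/2,\pi/2)$ we have $\cos(\theta_{i}/2)>0$, so both $W_{0}$ and $W_{0}\Lambda^{-1}=\diag\!\big(r_{i}^{-1}e^{-\mathrm{i}\theta_{i}/2}\big)$ have positive definite Hermitian part, i.e.\ are strictly accretive. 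Strict accretivity being an open condition, $W_{0}T_{\varepsilon}^{-1}$ stays strictly accretive for all sufficiently small $\varepsilon>0$; fixing such an $\varepsilon$ and setting $W:=(SE_{\varepsilon})^{-*}W_{0}(SE_{\varepsilon})^{-1}$ (a congruence of $W_{0}$) and using $M^{-1}=(SE_{\varepsilon})T_{\varepsilon}^{-1}(SE_{\varepsilon})^{-1}$, we get $WM^{-1}=(SE_{\varepsilon})^{-*}\big(W_{0}T_{\varepsilon}^{-1}\big)(SE_{\varepsilon})^{-1}$, a congruence of $W_{0}T_{\varepsilon}^{-1}$. Hence $W$ and $WM^{-1}$ are both strictly accretive, which is what was needed.

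The main obstacle is the sufficiency direction, and within it the facts that $M$ need not be diagonalizable and that no single ``global rotation'' can work: one eigenvalue of $M$ may have argument arbitrarily close to $\pi$ while another is arbitrarily close to $-\pi$, so the compensating factor must be applied eigenvalue by eigenvalue --- this is the role of the diagonal half-phase matrix $W_{0}$, while the Schur-plus-scaling step exists only to render the non-normal, non-diagonalizable part negligible so that $W_{0}$ still works by continuity. The hypothesis $\lambda(AB)\cap\mR_{-}=\emptyset$ enters exactly through $\theta_{i}\in(-\pi,\pi)$, which is what forces $\cos(\theta_{i}/2)>0$ and hence makes both $W_{0}$ and $W_{0}\Lambda^{-1}$ strictly accretive; this is the matrix counterpart of the scalar observation that the two admissible arcs for the argument of $H$ overlap precisely when $|\arg A+\arg B|<\pi$.
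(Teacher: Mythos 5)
Your proof is correct, but it takes a genuinely different, self-contained route from the paper's. The paper's proof is essentially two citations: for the ``if'' direction it invokes Ballantine's theorem that an invertible matrix with $\lambda(AB)\cap\mR_-=\emptyset$ factors as a product $\tilde A\tilde B$ of two strictly accretive matrices (and then sets $H^*=\tilde B B^{-1}$), and for the ``only if'' direction it invokes a matrix-phase result of Wang et al.\ stating that the arguments of the eigenvalues of a product are bounded by the sums of the extremal phases of the factors. You re-derive both ingredients from scratch. Your necessity argument is a clean two-line quadratic-form computation ($y^*Cy=\mu\,\overline{x^*\tilde Dx}$ with both quadratic forms in the open right half-plane), which is exactly the special case of the phase inequality needed here and avoids the machinery of sectorial decompositions. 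Your sufficiency argument is, in effect, a proof of the relevant direction of Ballantine's theorem: writing $M=AB$, your $W$ and $WM^{-1}$ being strictly accretive is equivalent to $M=W^{-1}(WM^{-1})^{-1}$ being a product of two strictly accretive matrices, and you construct this via Schur triangularization, the diagonal scaling $E_\varepsilon$ to suppress the nilpotent part, the half-angle diagonal $W_0$, and openness of strict accretivity. The trade-off is the usual one: the paper's proof is shorter and defers the real work to the literature, while yours is longer but self-contained and makes transparent exactly where the hypothesis enters (namely that $\theta_i\in(-\pi,\pi)$ forces $\cos(\theta_i/2)>0$, so the half-angle rotation simultaneously accretivizes $\Lambda^{-1}$ and stays accretive itself). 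All the individual steps (the reduction $H:=B^{-*}WM^{-1}$, the congruence invariance, the limit $T_\varepsilon^{-1}\to\Lambda^{-1}$, and the handling of the eigenvalue $0$ via invertibility) check out.
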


\begin{proof}
The proof follows by using results in \cite{ballantine1975accretive, wang2020phases}. More precisely, first assume $\lambda(AB) \cap \mR_- = \emptyset$. Then, by \cite[Thm.~1]{ballantine1975accretive} we have that the matrix $AB$ can be factored as $\tilde{A}\tilde{B}$, where $\tilde{A}, \tilde{B} \in \mA_n$. Let $H^* = \tilde{B}B^{-1}$, then $H^*B = \tilde{B} \in \mA_n$. Moreover, by congruence we have that $HA$ is accretive if and only if $H^{-1}(HA)H^{-*}=AH^{-*}$ is accretive. For the latter, we have that $AH^{-*} = AB\tilde{B}^{-1} = \tilde{A}\tilde{B}\tilde{B}^{-1} = \tilde{A} \in \mA_n$, and hence there exists an $H \in \gln$ so that $HA, H^*B \in \mA_n$. This proves the ``if''-statement.
To show the ``only if''-statement, assume that there exists an $H$ so that $HA, H^*B \in \mA_n$. Again, by congruence $HA \in \mA_n$ if and only if $AH^{-*} \in \mA_n$. By \cite[Thm.~6.2]{wang2020phases} it follows that $AB = AH^{-*}H^*B$ have no eigenvalues along $\mR_-$.
\end{proof}

The result in Proposition~\ref{prop:H_sect} can be understood in terms of the existence of a phasal multiplier $P \in \mH_{2n}$ that fulfills \eqref{eq:multipliers}, i.e., a multiplier $P$ where only the off-diagonal blocks are nonzero and where in fact both inequalities in \eqref{eq:multipliers} are strict (see \cite[Cor. VI.2]{meinsma1997dual}). 
In particular, this formally confirms the intuition that in order to show that the interconnection is stable under an arbitrary positive scaling uncertainty,
a certain type of ``phase information'' is the only thing that is needed.
Moreover, these results can be strengthened to (certain) matrices which are not of full rank as follows.

\begin{theorem}\label{thm:H_sectorial}
Given $A, B \in \mM_n$, assume that if zero is an eigenvalue of $AB$, then it is semi-simple.%
\footnote{An eigenvalue is called semi-simple if its algebraic and geometric multiplicities are the same \cite[Def.~1.4.3]{horn2013matrix}.
This is equivalent with that all Jordan blocks corresponding to the eigenvalue are of size $1 \times 1$ \cite[Prob.~3.1.P5]{horn2013matrix}. 
}
Then the following statements are equivalent:
\begin{enumerate}[(i)]
\item $\det(I + \tau AB) \neq 0$ for all $\tau \geq 0$;
\item there exists a $P \in \mH_{2n}$ fulfilling \eqref{eq:multipliers4}, and $P$ takes the form
\begin{equation}\label{eq:phase_multiplier}
P = \begin{bmatrix}
0 & H \\ H^* & 0
\end{bmatrix}
\end{equation}
for some $H \in \mM_n$;
\item for the eigenvalues of $AB$, it holds that
\begin{equation}\label{eq:condition}
\lambda(AB) \cap \mR_{--} = \emptyset.
\end{equation}
\end{enumerate}
\end{theorem}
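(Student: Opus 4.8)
The plan is to prove the cycle (i) $\Leftrightarrow$ (iii), (ii) $\Rightarrow$ (i) and (iii) $\Rightarrow$ (ii); the first two are short and all the content is in the last. For (i) $\Leftrightarrow$ (iii): at $\tau=0$ the determinant equals $1$, whereas for $\tau>0$ one has $\det(I+\tau AB)=\tau^n\prod_i(\tau^{-1}+\lambda_i(AB))$, which vanishes exactly when $-\tau^{-1}\in\lambda(AB)$; since $-\tau^{-1}$ sweeps out $\mR_{--}$ as $\tau$ ranges over $(0,\infty)$, (i) is equivalent to $\lambda(AB)\cap\mR_{--}=\emptyset$ (semi-simplicity plays no role here). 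For (ii) $\Rightarrow$ (i): a $P$ of the form \eqref{eq:phase_multiplier} turns the two left-hand sides of \eqref{eq:multipliers4} into $-(HA+A^*H^*)$ and $H^*B+B^*H$, so \eqref{eq:multipliers4} amounts to $HA+A^*H^*\succeq\varepsilon A^*A$ and $H^*B+B^*H\succeq 0$; for any $\tau>0$ the same $P$ then satisfies \eqref{eq:multipliers4} for the pair $(\tau A,B)$, because $-\bigl(H(\tau A)+(\tau A)^*H^*\bigr)=-\tau(HA+A^*H^*)\preceq-\tau\varepsilon A^*A=-(\varepsilon/\tau)(\tau A)^*(\tau A)$ while the second inequality is unchanged, and Lemma~\ref{lem: matrix_stability} then gives $\det(I+\tau AB)\neq 0$ (the case $\tau=0$ being trivial).

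For (iii) $\Rightarrow$ (ii) I would first dispose of the case $0\notin\lambda(AB)$: then $A,B\in\gln$, $\lambda(AB)\cap\mR_-=\emptyset$, and Proposition~\ref{prop:H_sect} supplies $H\in\gln$ with $HA,H^*B\in\mA_n$; taking $P$ as in \eqref{eq:phase_multiplier} makes both inequalities in \eqref{eq:multipliers} strict, which (subtracting a sufficiently small $\varepsilon A^*A$) yields \eqref{eq:multipliers4}. When $0\in\lambda(AB)$, I would use the semi-simplicity hypothesis to write $\mC^n=\ker(AB)\oplus\range(AB)$ and pick $S\in\gln$ with $S(AB)S^{-1}=\diag(0_d,M_1)$, $M_1$ invertible and $\lambda(M_1)\cap\mR_-=\emptyset$. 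Replacing $(A,B)$ by $(\hat A,\hat B):=(SA,BS^{-1})$ is harmless: if $P=\begin{bsmallmatrix}0&\hat H\\ \hat H^*&0\end{bsmallmatrix}$ solves \eqref{eq:multipliers4} for $(\hat A,\hat B)$, then $\begin{bsmallmatrix}0&\hat HS\\ S^*\hat H^*&0\end{bsmallmatrix}$ solves it for $(A,B)$, since passing between the two is a congruence by $S$, which preserves both semidefinite inequalities and only rescales the $\varepsilon A^*A$ slack by $\sigma_{\min}(S)^2>0$. Hence one may assume $AB=\diag(0_d,M_1)$. For this reduced pair I would argue by perturbation: for small $\rho>0$ choose invertible $A_\rho\to A$, $B_\rho\to B$ with $\lambda(A_\rho B_\rho)\cap\mR_-=\emptyset$ — the $d$ eigenvalues splitting off from $0$ can be steered into $\mC_+$ at first order (this is where semi-simplicity of the zero eigenvalue makes the cluster analysis clean), while those near $\lambda(M_1)$ stay off $\mR_-$ — apply Proposition~\ref{prop:H_sect} to obtain $H_\rho\in\gln$ with $H_\rho A_\rho,H_\rho^*B_\rho\in\mA_n$, normalize $H_\rho$ to unit norm, and extract a convergent subsequence $H_\rho\to H\neq 0$; this delivers $HA+A^*H^*\succeq 0$ and $H^*B+B^*H\succeq 0$.

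The main obstacle is to close this last step: the limit only gives $HA+A^*H^*\succeq 0$, whereas \eqref{eq:multipliers4_A} demands $HA+A^*H^*\succeq\varepsilon A^*A$, equivalently $\ker(HA+A^*H^*)\subseteq\ker A$. Showing that the degenerate directions of the limiting form are no larger than $\ker A$ is precisely where the semi-simplicity hypothesis has to be exploited — for instance by carrying the $\diag(0_d,M_1)$ block structure through the perturbation and using the block normal form of quasi-sectorial (and accretive) matrices from Remark~\ref{rem:quasi-sectorial}; that some such hypothesis is essential is clear from the fact that (i) does not in general imply (ii) once $AB$ has a non-semi-simple zero eigenvalue. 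A more transparent route is available in the sub-case $B\in\gln$ (after the reduction): factor $AB=\tilde A\tilde B$ with $\tilde B\in\mA_n$ and $\tilde A$ quasi-strictly accretive — a rank-deficient refinement of the accretive factorization behind Proposition~\ref{prop:H_sect} — and set $H^*:=\tilde BB^{-1}$, so that $H^*B=\tilde B\in\mA_n$ and $HA+A^*H^*=H(\tilde A+\tilde A^*)H^*$; since $\ker(\tilde A+\tilde A^*)=\ker\tilde A$ for quasi-strictly accretive $\tilde A$, one obtains $\ker(HA+A^*H^*)=H^{-*}\ker\tilde A\subseteq\ker A$ and hence \eqref{eq:multipliers4}. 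Reducing the general singular case to $B\in\gln$ is itself nontrivial and is part of the same difficulty.
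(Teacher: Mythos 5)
Your treatment of (i) $\Leftrightarrow$ (iii) and of (ii) $\Rightarrow$ (i) is correct and matches the paper's. The problem is the implication (iii) $\Rightarrow$ (ii) in the singular case, and the gap there is genuine (you flag it yourself). The compactness argument --- perturb to invertible $A_\rho, B_\rho$, invoke Proposition~\ref{prop:H_sect}, normalize $H_\rho$ and pass to a subsequential limit --- can only ever deliver the closed conditions $HA+A^*H^*\succeq 0$ and $H^*B+B^*H\succeq 0$; it gives no control on $\ker(HA+A^*H^*)$, and indeed the limit $H$ could make $HA+A^*H^*$ degenerate well beyond $\ker A$ (or the relevant quadratic forms could collapse entirely even though $\|H\|=1$). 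So the required inequality $HA+A^*H^*\succeq\varepsilon A^*A$ is not obtained. The escape route you sketch --- a ``rank-deficient refinement'' of Ballantine's accretive factorization producing $AB=\tilde A\tilde B$ with $\tilde A$ quasi-strictly accretive and $\tilde B$ strictly accretive --- is exactly the kind of statement that would need its own proof, and you also leave open the reduction of the general singular case to $B\in\gln$. In addition, the existence of perturbations $A_\rho, B_\rho$ steering the $d$ eigenvalues split off from $0$ into $\mC_+$ while keeping the rest off $\mR_-$ is asserted rather than constructed.

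The paper closes this step by an explicit construction that avoids limits altogether. Since $\lambda(AB)\cap\mR_{--}=\emptyset$, the principal square root $(AB)^{1/2}$ exists, has spectrum in $\mC_+\cup\{0\}$, and inherits the semi-simplicity of the zero eigenvalue. Writing $(AB)^{1/2}=PJP^{-1}$ with the superdiagonal entries of the nonzero Jordan blocks rescaled to $\epsilon=\min_k\real(\lambda_k)/2$, the matrix $D$ obtained from $J$ by replacing the zero block with $I$ is strictly accretive (a numerical-range estimate on the scaled Jordan blocks). Setting $H=A^*P^{-*}D^{-*}P^{-1}$ then gives $HA+A^*H^*=A^*P^{-*}(D^{-*}+D^{-1})P^{-1}A\succeq\varepsilon A^*A$ \emph{automatically}, because $H$ carries the factor $A^*$ in front of a positive definite matrix --- this is precisely the mechanism your limiting argument lacks. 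The identity $D^{-1}J=I\oplus\cdots\oplus I\oplus\mathbf 0$ then yields $H^*B=P^{-*}JP^{-1}$, which is congruent to $J$ and hence accretive, giving \eqref{eq:multipliers4_B}. If you want to salvage your outline, the lesson is that $H$ should be sought in the form $A^*\!\cdot(\text{positive definite})$ from the start rather than recovered as a limit.
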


\begin{proof}
See Appendix~\ref{app:proof_thm:H_sectorial}.
\end{proof}

If the matrix A in Theorem~\ref{thm:H_sectorial} is full rank, then the statement in Theorem~\ref{thm:H_sectorial}(ii) can be strengthened and a number of other equivalent conditions can also be derived. In particular, the multiplier $H$ can be chosen to be nonsingular and strict accretiveness of $HA$ can be guaranteed.

\begin{corollary}\label{cor:H_sectorial_A_inv}
Let  $A, B \in \mM_n$ be as in Theorem~\ref{thm:H_sectorial}. If $A \in \gln$, then the statements in Theorem~\ref{thm:H_sectorial} are also equivalent to
\begin{itemize}
\item[(iv)] there exists an $H \in \gln$ such that $HA$ is strictly accretive and $H^*B$ is quasi-strictly accretive;
\item[(v)] there exists an $H \in \gln$ such that $HA$ is strictly accretive and $H^*B$ is accretive.
\end{itemize}
Moreover, the multiplier $P$ in Theorem~\ref{thm:H_sectorial}(ii) can be selected so that it fulfills \eqref{eq:multipliers}.
\end{corollary}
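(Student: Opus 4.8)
The plan is to take Theorem~\ref{thm:H_sectorial} as given — so (i), (ii), (iii) are already equivalent — and to close the loop (iii)$\Rightarrow$(iv)$\Rightarrow$(v)$\Rightarrow$(ii). The computational engine is the identity, valid for every $H\in\mM_{n}$ when $P$ has the phasal form \eqref{eq:phase_multiplier},
\[
\begin{pmatrix} I & -A^{*}\end{pmatrix} P \begin{pmatrix} I \\ -A\end{pmatrix} = -\bigl(HA+(HA)^{*}\bigr),\qquad \begin{pmatrix} B^{*} & I\end{pmatrix} P \begin{pmatrix} B \\ I\end{pmatrix} = H^{*}B+(H^{*}B)^{*}.
\]
Because $A\in\gln$ makes $A^{*}A$ positive definite and bounded, for such a $P$ the requirement \eqref{eq:multipliers4} is equivalent to ``$HA\in\mA_{n}$ and $H^{*}B$ accretive'' (and $HA\in\mA_{n}$, $A$ being invertible, forces $H\in\gln$) --- which is precisely statement~(v); hence (ii)$\Leftrightarrow$(v). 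Moreover, $-(HA+(HA)^{*})\prec0$ as soon as $HA\in\mA_{n}$, so the same $P$ also satisfies \eqref{eq:multipliers}; since (v) holds whenever (i)--(iii) do, this delivers the last assertion of the corollary. Finally (iv)$\Rightarrow$(v) is immediate, since a quasi-strictly accretive matrix is accretive by definition.

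The substantive step is (iii)$\Rightarrow$(iv), which I would establish by an explicit construction. Since a zero eigenvalue of $AB$ is semi-simple, there is $S\in\gln$ with $S^{-1}(AB)S=\begin{bsmallmatrix} 0_{n-r} & 0 \\ 0 & N\end{bsmallmatrix}$, where $r=\rank(AB)$ and $N$ is invertible; as $\lambda(N)=\lambda(AB)\setminus\{0\}$ counted with multiplicity and $N$ is invertible, (iii) gives $\lambda(N)\cap\mR_{-}=\emptyset$. Factor $N=\tilde A\tilde B$ with $\tilde A,\tilde B\in\mA_{r}$ by \cite[Thm.~1]{ballantine1975accretive}, put $\hat A:=\begin{bsmallmatrix} I_{n-r} & 0 \\ 0 & \tilde A\end{bsmallmatrix}\in\mA_{n}$, and set $H:=A^{*}S^{-*}\hat A^{-*}S^{-1}\in\gln$. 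A short computation gives $HA=(S^{-1}A)^{*}\hat A^{-*}(S^{-1}A)$, a congruence of $\hat A^{-*}$; since $\mA_{r}$ is closed under inversion (indeed $\tilde A^{-1}+\tilde A^{-*}=\tilde A^{-1}(\tilde A+\tilde A^{*})\tilde A^{-*}\succ0$), one has $\hat A^{-*}\in\mA_{n}$, hence $HA\in\mA_{n}$. Similarly $H^{*}B=S^{-*}\hat A^{-1}\begin{bsmallmatrix} 0_{n-r} & 0 \\ 0 & N\end{bsmallmatrix}S^{-1}=S^{-*}\begin{bsmallmatrix} 0_{n-r} & 0 \\ 0 & \tilde B\end{bsmallmatrix}S^{-1}$, a congruence of $\begin{bsmallmatrix} 0_{n-r} & 0 \\ 0 & \tilde B\end{bsmallmatrix}$. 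This matrix is accretive, and its angular numerical range equals $W'(\tilde B)\cup\{0\}$, of opening angle strictly less than $\pi$ because the strictly accretive (hence sectorial) matrix $\tilde B$ has opening angle strictly less than $\pi$; so it is quasi-strictly accretive. Since $W'(\cdot)$ is unchanged under congruence (a congruence maps nonzero vectors bijectively to nonzero vectors), accretivity and quasi-sectoriality are congruence invariant, and therefore $H^{*}B$ is quasi-strictly accretive, i.e., (iv) holds. Combining, (iii)$\Rightarrow$(iv)$\Rightarrow$(v)$\Rightarrow$(ii), which together with Theorem~\ref{thm:H_sectorial} gives all the claimed equivalences.

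The step I expect to be the main obstacle is this construction when $AB$ is singular, where the point is to obtain ``quasi-strictly accretive'', and not merely ``accretive'', for $H^{*}B$. The resolution rests on carrying out the spectral block-diagonalization of $AB$ so that it acts on the second quadratic form above as a \emph{congruence} rather than as a plain similarity: a similarity would distort the angular numerical range and the block-diagonal factor could end up with opening angle $\pi$. Absorbing $S$ into $H$ (equivalently, replacing the pair $(A,B)$ by $(S^{-1}A,\,BS)$, under which condition (iii) is unaffected) arranges exactly this; Ballantine's accretive factorization then handles the invertible compression $N$, while the appended zero block only adjoins the origin to the angular numerical range and so leaves it pinched strictly below $\pi$. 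In the edge case $r=n$ --- that is, $B\in\gln$, so $AB$ is invertible --- the zero block disappears, $H^{*}B=\tilde B$ is strictly accretive, and (iv) then holds with ``strictly'' in place of ``quasi-strictly''.
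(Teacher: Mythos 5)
Your argument is correct, but it reaches the conclusion by a genuinely different route than the paper. For the closing leg the paper proves (v)$\Rightarrow$(iii) via matrix-phase inequalities (congruence invariance of phases plus \cite[Lem.~3]{chen2021phase}), whereas you prove (v)$\Leftrightarrow$(ii) directly by observing that, for a phasal $P$ and invertible $A$, the two quadratic forms in \eqref{eq:multipliers4} are exactly $-(HA+(HA)^*)$ and $H^*B+(H^*B)^*$, so that \eqref{eq:multipliers4} literally \emph{is} statement (v); this is more elementary and self-contained, and it also delivers the final claim about \eqref{eq:multipliers} for free. For (iii)$\Rightarrow$(iv) the paper simply reuses the construction from the proof of Theorem~\ref{thm:H_sectorial} (the principal square root $(AB)^{1/2}$, its Jordan form, and $H=A^*P^{-*}D^{-*}P^{-1}$), noting that invertibility of $A$ upgrades $HA$ to strictly accretive; you instead block-diagonalize $AB$ (using semi-simplicity of the zero eigenvalue), apply Ballantine's accretive factorization to the invertible compression, and absorb the similarity into $H$ so that it acts as a congruence --- essentially extending the paper's Proposition~\ref{prop:H_sect} to the singular case. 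Both constructions are valid; the paper's buys economy (nothing new to verify, and the real case of Corollary~\ref{cor: H_sectorial} follows from the real primary square root), while yours makes the role of Ballantine's theorem and of congruence-invariance of the angular numerical range explicit. Two cosmetic points: the degenerate case $B=0$ (whether the zero matrix counts as quasi-strictly accretive) is glossed over, but the paper's own proof adopts the same implicit convention; and your remark that a real version would require a real Ballantine factorization is worth keeping in mind, though it is outside the scope of this corollary.
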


\begin{proof}
See Appendix~\ref{app:proof_thm:H_sectorial}.
\end{proof}

In many applications, we would be interested in corresponding results for real-valued matrices. By just slightly modifying the proof of the theorem, we have the following corollary.

\begin{corollary} \label{cor: H_sectorial}
Under the assumptions in Theorem~\ref{thm:H_sectorial}, if $A, B \in \mM_n(\mR)$, the same conclusion is true where we can restrict $H$ to also be real.
\end{corollary}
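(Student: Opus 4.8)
The plan is to leverage Theorem~\ref{thm:H_sectorial} itself rather than re-run its appendix proof. The observation is that statements (i) and (iii) depend only on the product $AB$, which is a fixed real matrix: $\det(I+\tau AB)$ is the same number and $\lambda(AB)$ the same multiset of roots of the (real) characteristic polynomial, whether $AB$ is viewed in $\mM_n(\mR)$ or in $\mM_n(\mC)$, and the semi-simplicity of a zero eigenvalue of a real matrix is likewise unaffected by the ambient field. Hence Theorem~\ref{thm:H_sectorial} already delivers (i)$\,\Leftrightarrow\,$(iii), and both are equivalent to the version of (ii) that permits a complex $H$ in \eqref{eq:phase_multiplier}. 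Since a real phasal multiplier is in particular a complex one, the whole corollary reduces to the single implication: \emph{if} (ii) holds with some complex $H$, \emph{then} it holds with a real $H$ (and the same $\varepsilon>0$).

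For that implication I would first unpack the phasal form. A direct block computation with $P=\begin{bsmallmatrix}0 & H\\ H^* & 0\end{bsmallmatrix}$ shows that $P$ fulfils \eqref{eq:multipliers4} exactly when
\[
HA+(HA)^* \succeq \varepsilon A^*A, \qquad H^*B+(H^*B)^* \succeq 0 .
\]
Now write $H=H_r+ j H_i$ with $H_r,H_i\in\mM_n(\mR)$. Because $A$ is real, $A^*=A^T$ and
\[
HA+(HA)^* = \big(H_rA+(H_rA)^T\big) + j\big(H_iA-(H_iA)^T\big),
\]
where the second summand is $j$ times a real skew-symmetric matrix and therefore has vanishing quadratic form on every real vector. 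Evaluating the first inequality above on an arbitrary real $x$ gives $x^T\big(H_rA+(H_rA)^T\big)x \ge \varepsilon\, x^TA^TAx$; since $H_rA+(H_rA)^T$ and $A^TA$ are real symmetric, this real inequality is equivalent to the Loewner inequality $H_rA+(H_rA)^T \succeq \varepsilon A^TA$. The same reasoning applied to the second inequality, using $B^*=B^T$, yields $H_r^TB+(H_r^TB)^T\succeq 0$. Consequently the real phasal multiplier $P_r=\begin{bsmallmatrix}0 & H_r\\ H_r^T & 0\end{bsmallmatrix}\in\mH_{2n}(\mR)$ fulfils \eqref{eq:multipliers4} with the same $\varepsilon$, which is precisely statement (ii) with a real $H$, and the chain of equivalences is closed.

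I do not expect a genuine obstacle here; the only delicate points are recognising that the imaginary part of $H$ enters the relevant Hermitian forms only through a skew-symmetric term (so it drops out on real test vectors) and that, for real symmetric matrices, positive semidefiniteness may be checked on real vectors alone. An alternative but slightly longer route would be to retrace the proof of Theorem~\ref{thm:H_sectorial} in Appendix~\ref{app:proof_thm:H_sectorial} and observe that each step—notably the accretive factorization of \cite{ballantine1975accretive} and the congruence manipulations—admits a real counterpart when the data are real; the real-part argument above, however, is cleaner and self-contained, and it is the one I would present.
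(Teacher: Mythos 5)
Your proof is correct, but it takes a genuinely different route from the paper's. The paper proves the corollary by re-examining the constructive proof of Theorem~\ref{thm:H_sectorial}: the multiplier built there is $H = A^*P^{-*}D^{-*}P^{-1}$ with $P$, $D$ coming from a Jordan form of $(AB)^{1/2}$, and the paper argues these can all be taken real because $AB$ has a real primary square root --- which is where the semi-simplicity of the zero eigenvalue is invoked a second time, via \cite[Thm.~1.23]{higham2008functions}. You instead treat the complex version of Theorem~\ref{thm:H_sectorial} as a black box and descend from a complex multiplier to a real one by taking $H_r = \real(H)$: the imaginary part of $H$ enters the two Hermitian forms $HA+(HA)^*$ and $H^*B+(H^*B)^*$ only through $j$ times real skew-symmetric matrices, whose quadratic forms vanish on real vectors, and positive semidefiniteness of the resulting real symmetric matrices is determined by real test vectors. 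Both arguments are sound (I checked the block computation reducing \eqref{eq:multipliers4} to $HA+A^*H^*\succeq\varepsilon A^*A$ and $H^*B+B^*H\succeq 0$, and the skew-symmetry of $H_iA-(H_iA)^T$ and $B^TH_i-(B^TH_i)^T$). Your version is more elementary and self-contained --- it avoids matrix square-root theory entirely, preserves the same $\varepsilon$, and is really a general descent principle: any complex phasal multiplier for real data yields a real one. What it gives up is the explicit real construction of $H$ that the paper's route retains, and it leans on the complex theorem having already been proved; the paper's remark-style proof is shorter on the page precisely because the appendix construction is sitting right above it.
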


\begin{proof}
See Appendix~\ref{app:proof_thm:H_sectorial}.
\end{proof}

Observe that, in general, it is not possible to relax the assumption in Theorem~\ref{thm:H_sectorial} that if zero is an eigenvalue of $AB$, then it is semi-simple.
This can be seen by the following counterexample for $3 \times 3$ matrices, where the zero-eigenvalue of $AB$ has a Jordan block of size $2 \times 2$.

\begin{example}
Let
\[
A = \begin{bmatrix}
1 & 0 & 0 \\
0 & 0 & 1 \\
0 & 0 & 0
\end{bmatrix}, \quad
B = I_3,
\quad
H = \begin{bmatrix}
h_{11} & h_{12} & h_{13} \\
h_{21} & h_{22} & h_{23} \\
h_{31} & h_{32} & h_{33}
\end{bmatrix},
\]
and note that $\det(I + \tau AB) = 1 + \tau \neq 0$ for all $\tau \geq 0$. Moreover, $A^*A = \diag(1, 0, 1)$.
Next, note that the existence of a multiplier of the form \eqref{eq:phase_multiplier} that fulfills any of the conditions \eqref{eq:multipliers}-\eqref{eq:multipliers3} would imply that both $HA + A^*H^* \succeq 0$ and $H^*B + B^*H \succeq 0$.
A direct calculation gives that
\[
HA + A^*H^* = \begin{bmatrix}
h_{11} + h_{11}^* & h_{12} + h_{31}^* & h_{13} \\
h_{31} + h_{12}^* & h_{32} + h_{32}^* & h_{33} \\
h_{13}^*          & h_{33}^*          & 0 
\end{bmatrix},
\]
and for this to be positive semidefinite we must have $h_{13} = h_{33} = 0$, see, e.g., \cite[Obs.~7.1.10]{horn2013matrix}. Therefore, $HA + A^*H^*$ has at most rank 2, and can hence only be positive semidefinite. This means that there is no multiplier of the form \eqref{eq:phase_multiplier} that fulfills \eqref{eq:multipliers}. Moreover, it is easily seen that $HA + A^*H^* \not \succeq \varepsilon A^*A$ for all $\epsilon > 0$, and therefore there is no multiplier of the form \eqref{eq:phase_multiplier} that satisfies \eqref{eq:multipliers4}.
Next, note that
\[
H^*\!B + B^*H \! \! = \! H + H^* \! \! = \! \!
\begin{bmatrix}
h_{11} + h_{11}^* & h_{12} + h_{21}^* & h_{31}^* \\
h_{21} + h_{12}^* & h_{22} + h_{22}^* & h_{23} + h_{32}^* \\
h_{31} & h_{32} + h_{23}^* & 0
\end{bmatrix}\!\!,
\]
which, similar to above, can only be positive semidefinite if $h_{31} = 0$ and $h_{32} = -h_{23}^*$. However, that means that $H + H^*$ has rank 2, and hence can only be positive semidefinite. Moreover, for all $\epsilon > 0$ we therefore also have that $H^*B + B^*H \not \succeq \epsilon B^*B$. Thus, there is no multiplier of the form \eqref{eq:phase_multiplier} that satisfies \eqref{eq:multipliers2} or \eqref{eq:multipliers3}.
\end{example}

Nevertheless, while the above counterexample shows that the condition on the semi-simple zero-eigenvalue can in general not be relaxed, the case for matrices of size $2 \times 2$ is still open.
The following gives an example of where there exists a multiplier of the form \eqref{eq:phase_multiplier} that fulfills \eqref{eq:multipliers4}, despite the fact that that zero is a not semi-simple eigenvalue of $AB$. 

\begin{example}
Let
\[
A =
\begin{bmatrix}
0 & 1 \\
0 & 0
\end{bmatrix},
\qquad
B = I_2,
\qquad
H = \begin{bmatrix}
0  & -1 \\
1 & 1 
\end{bmatrix},
\]
and note that $\det(I + \tau AB) = 1 \neq 0$ for all $\tau \geq 0$.
A direct calculation gives that $A^*A = \diag(0,1)$, that $HA = \diag(0,1)$, and that $H^*B + HB^* = H^* + H = \diag(0,2)$. Therefore, for $\epsilon = 1$ we have that $P$ as in \eqref{eq:phase_multiplier} fulfills \eqref{eq:multipliers4}.
\end{example}

\begin{remark}\label{rem:relation_to_mu}
Theorem~\ref{thm:H_sectorial} appears to be intrinsically and closely related to $\mu$-analysis when $AB$ is invertible. In particular, it may be established using the $\mu$-analysis results in~\cite{fan1991robustness, chou1999stability} that (i) in Theorem~\ref{thm:H_sectorial} implies there exists $H \in \gln$ such that $HAB + B^*A^*H^* > 0$ and $H + H^* > 0$, which may also be established via (iv) in Corollary~\ref{cor:H_sectorial_A_inv}. Further investigation into the delicate relation between Theorem~\ref{thm:H_sectorial} and $\mu$-analysis does not appear to be straightforward and is a worthwhile future research direction of significant importance.
\end{remark}

\subsection{Multipliers for stability under congruence}

The results in Theorem~\ref{thm:H_sectorial} show that stability under magnitude scaling is equivalent to the existence of a phasal multiplier. 
Interesting to note in this context is that for this limited (and in some sense minimal) set of magnitudinal perturbations, the set of possible multipliers of phase type to which we could restrict our attention, and still have a necessary and sufficient condition for robust stability, is large (and in some sense maximal). Motivated by this, we next investigate a type of perturbations against which a minimal set of phasal multipliers can guarantee robust stability. In this case, we have the following result.

\begin{theorem}\label{thm:TATstarSBSstar}
Given $A,B \in \mM_n \setminus \{ 0 \}$, the following statements are equivalent:
\begin{enumerate}[(i)]
\item $\det(I + T^*AT S^*BS) \neq 0$ for all $T,S \in \gln$;
\item there exists a $P \in \mH_{2n}$ fulfilling \eqref{eq:multipliers3} or \eqref{eq:multipliers4}, and which takes the form
\[
P = \begin{bmatrix}
0 & zI \\ z^*I & 0
\end{bmatrix}
\]
for some $z \in \mT$;
\item one matrix is quasi-sectorial, the other is semi-sectorial, $\phimax(A) + \phimax(B) < \pi$, and $\phimin(A) + \phimin(B) > -\pi$.
\end{enumerate}
Finally, if the quasi-sectorial matrix in (iii) is of full rank, then the multiplier $P$ in (ii) fulfills \eqref{eq:multipliers2} or \eqref{eq:multipliers}.
\end{theorem}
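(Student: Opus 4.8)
The plan is to prove the cycle (iii)$\Rightarrow$(ii)$\Rightarrow$(i)$\Rightarrow$(iii). The key elementary observation is that $x^*(T^*AT)x=(Tx)^*A(Tx)$, so the angular numerical range $W'(\cdot)$ — hence the sectoriality type and the phases (modulo $2\pi$) — is invariant along the congruence orbit $\mathcal{O}(A):=\{T^*AT:T\in\gln\}$, which is moreover closed under multiplication by positive scalars. Specialising $S=I$ (resp. $T=I$) in~(i) gives $\det(I+A'B)\neq0$ for all $A'\in\mathcal{O}(A)$ (resp. $\det(I+AB')\neq0$ for all $B'\in\mathcal{O}(B)$); since $\det(I+\tau M)\neq0$ for all $\tau>0$ is the same as $\lambda(M)\cap\mR_{--}=\emptyset$, condition~(i) in fact implies $\lambda(A'B)\cap\mR_{--}=\emptyset$ for every $A'\in\mathcal{O}(A)$, and symmetrically for $\mathcal{O}(B)$. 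This reduction is what I would use for the hard direction.

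For (iii)$\Rightarrow$(ii) I would assume, without loss of generality, that $A$ is quasi-sectorial and $B$ semi-sectorial. Because $\phimax(A)-\phimin(A)<\pi$ and $\phimax(B)-\phimin(B)\leq\pi$, while — for suitable $2\pi$-representatives — $\phimax(A)+\phimax(B)<\pi$ and $\phimin(A)+\phimin(B)>-\pi$, one can pick $\gamma\in\mR$ with all phases of $e^{j\gamma}A$ in $(-\pi/2,\pi/2)$ and all phases of $e^{-j\gamma}B$ in $[-\pi/2,\pi/2]$. Set $z=e^{j\gamma}$ and $P=\begin{bsmallmatrix}0&zI\\z^*I&0\end{bsmallmatrix}\in\mH_{2n}$. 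Then $\begin{pmatrix}I&-A^*\end{pmatrix}P\begin{pmatrix}I\\-A\end{pmatrix}=-(zA+(zA)^*)$ and $\begin{pmatrix}B^*&I\end{pmatrix}P\begin{pmatrix}B\\I\end{pmatrix}=z^*B+(z^*B)^*$, and the placement of the phases says precisely that $zA+(zA)^*\succeq\varepsilon A^*A$ for some $\varepsilon>0$ and $z^*B+(z^*B)^*\succeq0$; thus $P$ satisfies~\eqref{eq:multipliers4} (and, if $B$ is the quasi-sectorial one instead, the symmetric choice gives a $P$ satisfying~\eqref{eq:multipliers3}). If the quasi-sectorial matrix is additionally of full rank — say $A$ — then it is sectorial, so $zA+(zA)^*\succ0$ and, as $A^*A\succ0$, the first inequality sharpens to $\begin{pmatrix}I&-A^*\end{pmatrix}P\begin{pmatrix}I\\-A\end{pmatrix}\prec0$, whence $P$ satisfies~\eqref{eq:multipliers} (respectively~\eqref{eq:multipliers2} if $B$ is the full-rank one); this is the concluding claim.

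For (ii)$\Rightarrow$(i), fix $T,S\in\gln$. Congruence transforms the blocks of $P$ by $\begin{pmatrix}I&-(T^*AT)^*\end{pmatrix}P\begin{pmatrix}I\\-T^*AT\end{pmatrix}=-T^*(zA+(zA)^*)T$ and $\begin{pmatrix}(S^*BS)^*&I\end{pmatrix}P\begin{pmatrix}S^*BS\\I\end{pmatrix}=S^*(z^*B+(z^*B)^*)S$, so~\eqref{eq:multipliers4} (or~\eqref{eq:multipliers3}) for $(A,B)$ yields the same for $(T^*AT,S^*BS)$ after replacing $\varepsilon$ by $\varepsilon/\|TT^*\|_2$ (or $\varepsilon/\|SS^*\|_2$). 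Lemma~\ref{lem: matrix_stability} then gives $\det(I+T^*AT\,S^*BS)\neq0$, which is~(i).

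The substance is (i)$\Rightarrow$(iii), and the device I would use is the linear-algebra fact that for $A\in\mM_n$ and nonzero $r,s\in\mC^n$ with $r^*s\in W'(A)\setminus\{0\}$ there is a $T\in\gln$ with $(T^*AT)r=s$: pick $r_0\neq0$ with $r_0^*Ar_0=r^*s$ (then $Ar_0\neq0$) and build $T$ realising $Tr=r_0$ and $T^*(Ar_0)=s$, which is possible since $(Ar_0)^*r_0=\overline{r^*s}$. Now suppose~(i). If $B$ were not semi-sectorial then $W'(B)$ contains every nonzero complex number; choosing $q$ with $Aq\neq0$ and $q^*Aq\neq0$ (available since $A\neq0$) and applying the fact with $B$ in place of $A$, $r=Aq$, $s=-q$, gives $B'\in\mathcal{O}(B)$ with $B'(Aq)=-q$, hence $AB'(Aq)=-Aq$ and $\det(I+AB')=0$ — contradicting~(i); so $A$ and $B$ are both semi-sectorial. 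Next, writing $W'(A)W'(B):=\{ab:a\in W'(A),\,b\in W'(B)\}$, suppose $W'(A)W'(B)\cap\mR_{--}\neq\emptyset$: pick $w_A=x^*Ax\in W'(A)$ and $w_B=v^*Bv\in W'(B)$ with $w_Aw_B\in\mR_{--}$ (both nonzero); then $-\overline{w_B}$ has the same argument as $w_A$, so it lies in $W'(A)\setminus\{0\}$, and since $Bv\neq0$ the fact gives $A'\in\mathcal{O}(A)$ with $A'(Bv)=-v$, hence $A'Bv=-v$ and $\det(I+A'B)=0$ — again a contradiction. Therefore $W'(A)W'(B)$ avoids $\mR_{--}$; since each of $W'(A),W'(B)$ spans an arc of directions of length at most $\pi$, this forces at least one of those arcs to have length strictly less than $\pi$ (so at least one of $A,B$ is quasi-sectorial), and reading off the two arc endpoints it is exactly equivalent to $\phimax(A)+\phimax(B)<\pi$ and $\phimin(A)+\phimin(B)>-\pi$ for suitable representatives of the phases — that is,~(iii). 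I expect the main obstacle to be precisely this last step: the phase-sum inequalities in~(iii) must be read modulo $2\pi$, with the angular numerical ranges free to be rotated through multiples of $2\pi$ independently of one another, and the invariant reformulation $W'(A)W'(B)\cap\mR_{--}=\emptyset$ is what matches~(i) on the nose; keeping that bookkeeping consistent while running the two constructions above is the delicate part.
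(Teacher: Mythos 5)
Your directions (iii)$\Rightarrow$(ii) and (ii)$\Rightarrow$(i) are essentially sound: the former matches the paper's own argument (rotate by $z$ so that $zA$ is quasi-strictly accretive and $z^*B$ accretive; the claim $zA+(zA)^*\succeq\varepsilon A^*A$ does require the short sectorial-decomposition computation that the paper supplies, but it is true), and the congruence covariance of the two quadratic forms plus Lemma~\ref{lem: matrix_stability} gives (ii)$\Rightarrow$(i). Your eigenvector-assignment fact --- that $r^*s\in W'(A)\setminus\{0\}$ yields $T\in\gln$ with $(T^*AT)r=s$ --- is correct and is an attractive self-contained substitute for the paper's appeal to Johnson's congruence theorem; with it, your Steps~1 and~2 validly show that (i) forces both matrices to be semi-sectorial and $W'(A)W'(B)\cap\mR_{--}=\emptyset$.

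The gap is in your final step: $W'(A)W'(B)\cap\mR_{--}=\emptyset$ does \emph{not} imply (iii), and it does not ``match (i) on the nose.'' For a semi-sectorial matrix that is not diagonalizable by congruence, the extremal phases correspond to boundary directions of $W'(A)$ that are \emph{not attained} by nonzero elements. Concretely, take $A=B=\begin{bsmallmatrix}1&2\\0&1\end{bsmallmatrix}$: here $W(A)=\{z:|z-1|\le 1\}$, so $W'(A)\setminus\{0\}$ is the \emph{open} right half-plane, the attained directions form the open arc $(-\pi/2,\pi/2)$, the sum of the two arcs is $(-\pi,\pi)$, and $W'(A)W'(B)$ misses $\mR_{--}$. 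Yet neither matrix is quasi-sectorial, so (iii) fails --- and (i) fails as well: with $T=\begin{bsmallmatrix}0&-1\\1&0\end{bsmallmatrix}$ and $S=I$ one gets $T^*ATB=\begin{bsmallmatrix}1&2\\-2&-3\end{bsmallmatrix}$, which has $-1$ as a double eigenvalue. The same phenomenon occurs for rotation-Hermitian blocks $e^{j\alpha}\diag(1,-1)$, whose attained directions are only two antipodal points, so the product of the two angular numerical ranges can again miss $\mR_{--}$ while (i) and (iii) both fail. Your numerical-range device therefore cannot handle the matrices with opening angle exactly $\pi$ whose extreme directions are unrealized; these are precisely the cases the paper treats separately via the congruence canonical form of semi-sectorial matrices and explicit $2\times2$ unitary congruences that exploit the Jordan structure rather than the numerical range, and they constitute the bulk of the work in Appendix~\ref{app:proof_thm:TATstarSBSstar}. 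As it stands, your proof of (i)$\Rightarrow$(iii) is incomplete.
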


\begin{proof}
See Appendix~\ref{app:proof_thm:TATstarSBSstar}.
\end{proof}

\begin{remark}
The result in Theorem~\ref{thm:TATstarSBSstar} is a type of small-phase theorem, akin to \cite[Thm.~7.1]{wang2023phases}.
The difference is that Theorem~\ref{thm:TATstarSBSstar} considers robust stability against congruence of two given matrices, while \cite[Thm.~7.1]{wang2023phases} considers robust stability with respect to a matrix cone of semi-sectorial matrices.
Nevertheless, note that when $B = I$, the result in Theorem~\ref{thm:TATstarSBSstar} specializes to robust stability against the matrix cone $\mP_n$.
\end{remark}

Similar to before, we get the following real-valued version of the theorem as a corollary.

\begin{corollary} \label{cor: TATstarSBSstar}
Theorem~\ref{thm:TATstarSBSstar} remains true when $A$, $B$, $T$, $S$, and $z$ are all real.
\end{corollary}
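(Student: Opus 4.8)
The plan is to revisit the proof of Theorem~\ref{thm:TATstarSBSstar} in Appendix~\ref{app:proof_thm:TATstarSBSstar} and check that, when $A,B\in\mM_n(\mR)$, every matrix and scalar it produces can be taken real. Statement~(iii) makes no reference to $T$, $S$, or $z$, so it is literally unchanged, and ``$z$ real'' just means $z\in\mT\cap\mR=\{+1,-1\}$. It then suffices to close the cycle (iii)$\Rightarrow$(ii-real)$\Rightarrow$(i-real)$\Rightarrow$(iii) over $\mR$. The middle implication is immediate: a real $P=\begin{bsmallmatrix}0&zI\\z^{*}I&0\end{bsmallmatrix}$ with $z=\pm1$ fulfilling \eqref{eq:multipliers3} or \eqref{eq:multipliers4} is already a valid witness for the complex statement~(ii) of the theorem, which gives $\det(I+T^{*}AT\,S^{*}BS)\neq0$ for all $T,S\in\gln$, real ones included.

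For (iii)$\Rightarrow$(ii-real) the point is that for a real $A$ one has $\overline x^{*}A\overline x=\overline{x^{*}Ax}$, so $W(A)$ and the angular numerical range $W'(A)$ are symmetric about the real axis; a convex cone of opening $\le\pi$ symmetric about $\mR$ has its axis on the positive or negative real axis (or is $i\mR$ or $\mR$), so the phases of a real semi-/quasi-sectorial matrix form an arc centred at $0$ or at $\pi$. One then verifies that condition~(iii), for real $A,B$, forces both phase arcs to be centred at $0$ --- after at most simultaneously replacing $(A,B)$ by $(-A,-B)$, which alters neither~(i) nor~(ii) --- since a centre at $\pi$ (or the symmetric indefinite case) would push $\phimax(A)+\phimax(B)$ to at least $\pi$. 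Consequently $A$ and $B$ are both accretive, the quasi-sectorial one quasi-strictly so, the rotation step of the appendix proof becomes $z=1$, and the relevant inequalities among \eqref{eq:multipliers}--\eqref{eq:multipliers4} reduce to the real conditions ``$A$ quasi-strictly accretive and $B$ accretive'' (yielding \eqref{eq:multipliers4}) or ``$A$ accretive and $B$ quasi-strictly accretive'' (yielding \eqref{eq:multipliers3}); the upgrade to \eqref{eq:multipliers2} or \eqref{eq:multipliers} when the quasi-sectorial matrix is of full rank is the same computation as in the theorem.

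The hard part is (i-real)$\Rightarrow$(iii), in contrapositive form: if~(iii) fails, produce \emph{real} $T,S\in\gln$ with $\det(I+T^{*}AT\,S^{*}BS)=0$. The appendix constructs such blocking congruences, but a priori they are complex, and the real congruence orbit of a real matrix is strictly smaller than its complex one (already for skew-symmetric $A$). To realify the construction I would first establish a real analogue of the sectorial factorization: every real semi-sectorial $A$ admits $A=T^{*}R_A T$ with $T\in\gln(\mR)$ and $R_A$ a direct sum of $1\times1$ real scalars and $2\times2$ rotation--scaling blocks, obtained by pairing conjugate phases $\pm\theta$ into a single real block (and splitting off a semisimple zero as in Remark~\ref{rem:quasi-sectorial}); this adapts the diagonalization-by-congruence arguments behind \cite{wang2020phases, zhang2015matrix, ballantine1975accretive}. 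Applying it to both $A$ and $B$ reduces a violation of~(iii) to a $1\times1$ or $2\times2$ sub-problem in which the offending phase inequality is visible; there one writes down real $T,S$ explicitly and transports them back by the real congruences. The remaining effort is bookkeeping --- which matrix is quasi-sectorial versus merely semi-sectorial, which arc each phase set occupies, and the rank drop permitted for the quasi-sectorial matrix --- after which the argument of Appendix~\ref{app:proof_thm:TATstarSBSstar} goes through verbatim. The analogous real versions of the downstream refinements (cf.\ Corollary~\ref{cor:H_sectorial_A_inv}) follow the same way.
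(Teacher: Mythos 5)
Your central observation --- that the numerical range of a real matrix is symmetric about the real axis, so the phase arc of a real semi-/quasi-sectorial matrix is centred at $0$ or at $\pi$ and the rotation in the multiplier collapses to $z=\pm 1$ --- is exactly the paper's entire stated proof, which phrases it as: a real matrix $A$ is semi-sectorial if and only if $A+A^T\succeq 0$ or $A+A^T\preceq 0$. (Only the ``if'' half of that criterion is what the implication (iii)$\Rightarrow$(ii-real) actually uses; real semi-sectorial matrices with $\phimax-\phimin=\pi$, such as $\diag(1,-1)$, have indefinite symmetric part --- this is the ``$\mR$-axis'' case you note parenthetically, and it is exactly the rotation-Hermitian branch of the appendix's case analysis.) For the sufficiency half of the cycle, then, your argument and the paper's coincide, and your handling of (ii-real)$\Rightarrow$(i-real) and of the $(-A,-B)$ normalisation is fine.

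Where you go beyond the paper is in treating (i-real)$\Rightarrow$(iii) as a separate obligation requiring \emph{real} blocking congruences. You are right that this does not follow from the complex theorem (condition (i-real) quantifies over fewer $T,S$), and the paper is silent on it. But your plan for this direction has a concrete hole: the real sectorial factorization you propose to build exists only for semi-sectorial matrices, whereas (iii) can fail precisely because one of $A,B$ is \emph{not} semi-sectorial. In that branch the appendix's construction rests on \cite[Thm.~1]{johnson1974field} --- any point of the angular numerical range can be realised as an eigenvalue under complex congruence --- and the real-congruence analogue is genuinely different, since for real $x$ one has $x^TAx=x^T\tfrac{A+A^T}{2}x$, so real quadratic forms see only the symmetric part and $W'(A)$ no longer controls what real congruence can achieve. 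You would need a separate real construction for this case (for instance, exploiting that $A+A^T$ indefinite lets you drive $\tr(AQ)$ with $Q=TT^T\succ 0$ to either sign and then rescale to place an eigenvalue at $-1$) before the ``bookkeeping'' you defer can start. As written, the hard direction remains a plausible plan rather than a proof --- though, to be fair, the paper itself supplies even less there.
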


\begin{proof}
This can be established by noting that a real matrix $A$ is semi-sectorial if and only if either $A + A^T \succeq 0$ or $A + A^T \preceq 0$.
\end{proof}

\subsection{Phasal multipliers for LTI systems}
Next, we extend the above results to LTI systems.
In particular, in Section~\ref{sec:background} it was shown how quadratic graph-separation results for matrices can be extended to LTI systems.
Here, we follow along the same line. In particular, for magnitudinal perturbations we have the following necessary and sufficient condition for stability.

\begin{theorem}\label{thm:LTI_magnitudinal_uncertainty}
  Given $G \in \RR\Hinfty^{n \times n}$ and $K \in \RR\Hinfty^{n \times n}$ for which any potential zero-eigenvalue of $G(j\omega)K(j\omega)$, for $\omega
  \in [0, \infty]$, is
  semi-simple, then $(I + \tau GK)^{-1} \in \RR\Hinfty^{n \times n}$ for all $\tau > 0$ if and only if there exists an $H \in \Linfty^{n \times n}$ such that
  for all $\omega \in [0, \infty]$,
\begin{align*}
\begin{pmatrix}
I & -G(j\omega)^*
\end{pmatrix} \Pi(j\omega)
\begin{pmatrix}
I \\ -G(j\omega)
\end{pmatrix} & \preceq -\epsilon G(j\omega)^*G(j\omega)  \\
\begin{pmatrix}
K(j\omega)^* & I
\end{pmatrix} \Pi(j\omega)
\begin{pmatrix}
K(j\omega) \\ I
\end{pmatrix} & \succeq 0,
\end{align*}
where
\[
  \Pi(j\omega) := \begin{bmatrix}
    0 & H(j\omega) \\
    H(j\omega)^* & 0
    \end{bmatrix}.
\]
\end{theorem}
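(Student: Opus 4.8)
\emph{Proof sketch.} The plan is to prove this as the transfer‑function counterpart of Theorem~\ref{thm:H_sectorial}, applying that theorem frequency‑wise in the same way that Lemma~\ref{lem: matrix_stability} was lifted to Propositions~\ref{prop:IQC_suff} and~\ref{prop:IQC_nec}. The elementary identity underpinning everything is that, for the stated multiplier $\Pi(j\omega)=\begin{bsmallmatrix}0 & H(j\omega)\\ H(j\omega)^* & 0\end{bsmallmatrix}$,
\begin{align*}
\begin{pmatrix} I & -G^*\end{pmatrix}\Pi\begin{pmatrix}I\\ -G\end{pmatrix} &= -\bigl(HG+G^*H^*\bigr),\\
\begin{pmatrix} K^* & I\end{pmatrix}\Pi\begin{pmatrix}K\\ I\end{pmatrix} &= H^*K+K^*H
\end{align*}
(all quantities evaluated at $j\omega$), so the two displayed inequalities in the theorem say precisely that $(G(j\omega),K(j\omega),\Pi(j\omega))$ satisfies~\eqref{eq:multipliers4} with a phasal multiplier of the form~\eqref{eq:phase_multiplier}, for every $\omega\in[0,\infty]$. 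Moreover, replacing $K$ by $\tau K$ with $\tau>0$ leaves the first inequality unchanged and scales the left‑hand side of the second by $\tau$, so both persist; this is what allows one fixed $H$ to serve every $\tau$.

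\emph{Sufficiency.} Given such an $H\in\Linfty^{n\times n}$ and any $\tau>0$, the identity above applied with $K$ scaled by $\homoparam\tau$, $\homoparam\in[0,1]$, shows that~\eqref{eq:multipliers4} holds for $(G(j\omega),\homoparam\tau K(j\omega),\Pi(j\omega))$ at every $\omega\in[0,\infty]$ and $\homoparam\in[0,1]$, whence $\det(I+\homoparam\tau G(j\omega)K(j\omega))\neq0$ there by Lemma~\ref{lem: matrix_stability}. Since $GK\in\RR\Hinfty^{n\times n}$, $\det(I+\homoparam\tau G(s)K(s))$ is analytic and nonvanishing on $\mC_+$ for $\homoparam>0$ small; letting $\homoparam$ grow to $1$ and using, exactly as in the proof of Proposition~\ref{prop:IQC_suff}, that the zeros of this determinant move continuously in $\homoparam$ and cannot enter $\mC_+$ without crossing the (zero‑free) imaginary axis, one obtains $\det(I+\tau G(s)K(s))\neq0$ on $\mC_+$, i.e.\ $(I+\tau GK)^{-1}\in\RR\Hinfty^{n\times n}$.

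\emph{Necessity.} Suppose $(I+\tau GK)^{-1}\in\RR\Hinfty^{n\times n}$ for all $\tau>0$. Its boundary values being invertible gives $\det(I+\tau G(j\omega)K(j\omega))\neq0$ for every $\omega\in[0,\infty]$ and $\tau>0$, hence (trivially adding $\tau=0$) for all $\tau\geq0$. Fixing $\omega$, the assumed semi‑simplicity of any zero‑eigenvalue of $G(j\omega)K(j\omega)$ lets us apply Theorem~\ref{thm:H_sectorial} to $A=G(j\omega)$, $B=K(j\omega)$: implication (i)$\Rightarrow$(ii) yields $H_\omega\in\mM_n$ and $\epsilon_\omega>0$ with $H_\omega G(j\omega)+G(j\omega)^*H_\omega^*\succeq\epsilon_\omega\,G(j\omega)^*G(j\omega)$ and $H_\omega^*K(j\omega)+K(j\omega)^*H_\omega\succeq0$, and rescaling $H_\omega$ by $1/\epsilon_\omega$ we may take $\epsilon_\omega=1$. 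It then remains to assemble a single $H\in\Linfty^{n\times n}$ realizing both inequalities (with $\epsilon=1$) simultaneously for all $\omega\in[0,\infty]$ --- once this is done, $\Pi$ as stated is the desired multiplier.

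\emph{The main obstacle} is precisely this last assembly --- upgrading pointwise feasibility to feasibility by an essentially bounded multiplier. I would argue it as follows. For each $\omega$ the feasible set
\[
\mathcal{F}(\omega):=\bigl\{\,H\in\mM_n \;\big|\; HG(j\omega)+G(j\omega)^*H^*\succeq G(j\omega)^*G(j\omega),\ H^*K(j\omega)+K(j\omega)^*H\succeq0\,\bigr\}
\]
is nonempty, closed and convex, and depends on $\omega$ through the continuous (indeed real‑rational, with a well‑defined value at $\omega=\infty$) data $G(j\omega),K(j\omega)$; hence $\omega\mapsto\mathcal{F}(\omega)$ is a measurable closed‑valued multifunction on the compact set $[0,\infty]$, and a measurable selection theorem (Kuratowski--Ryll-Nardzewski) yields a measurable $\omega\mapsto H(j\omega)\in\mathcal{F}(\omega)$, e.g.\ the minimum‑norm selection $H(j\omega)=\argmin_{H\in\mathcal{F}(\omega)}\|H\|_2$. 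The delicate point is its essential boundedness. Away from the frequencies at which $G(j\omega)K(j\omega)$ drops rank --- finitely many, unless $GK$ is identically singular, which is treated separately --- both $G(j\omega)$ and $K(j\omega)$ are invertible, so the first inequality forces $H_\omega G(j\omega)+G(j\omega)^*H_\omega^*\succ0$ and a small perturbation of $H_\omega$ by a multiple of $K(j\omega)$ makes the second strict too; there $\mathcal{F}$ is lower semicontinuous, $\operatorname{dist}(0,\mathcal{F}(\cdot))=\|H(j\cdot)\|_2$ is upper semicontinuous, hence locally bounded. It remains to rule out a blow‑up of $\|H(j\omega)\|_2$ as $\omega$ tends to an exceptional frequency; I expect this to follow from the semi‑simplicity hypothesis, which (with rationality) forces the eigenvalue branches of $G(j\omega)K(j\omega)$ through the origin to leave it along directions off $\mR_{--}$ and thus keeps the pointwise multiplier controlled near such a frequency (cf.\ Remark~\ref{rem:quasi-sectorial} and the role of semi‑simplicity in Theorem~\ref{thm:H_sectorial}). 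An alternative, avoiding selection theorems, would be to construct a real‑rational $H$ with no imaginary‑axis poles directly, by rationalising the accretive factorisation behind Proposition~\ref{prop:H_sect} and Theorem~\ref{thm:H_sectorial}; then $H\in\Linfty^{n\times n}$ follows immediately. In summary, the sufficiency direction and the frequency‑wise reduction in necessity are routine given Theorem~\ref{thm:H_sectorial}, Lemma~\ref{lem: matrix_stability} and Proposition~\ref{prop:IQC_suff}, and essentially all the difficulty is concentrated in the regularity and uniformity of the necessity multiplier, with the behaviour near rank‑deficient frequencies being the crux.
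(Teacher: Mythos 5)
Your argument follows the paper's proof almost exactly: the block identity turning the two displayed inequalities into \eqref{eq:multipliers4} with a phasal multiplier of the form \eqref{eq:phase_multiplier}, the invariance of the second inequality under $K \mapsto \tau K$, the homotopy/continuity-of-zeros argument from the proof of Proposition~\ref{prop:IQC_suff} for sufficiency, and the frequency-wise application of Theorem~\ref{thm:H_sectorial} for necessity are precisely the paper's ingredients. The only divergence is in how the pointwise multipliers $H_\omega$ are assembled into a single $H \in \Linfty^{n \times n}$. You propose a measurable-selection argument and explicitly leave open the essential boundedness of the selection near frequencies where $G(j\omega)K(j\omega)$ drops rank; as written, that is a genuine gap --- ``I expect this to follow from semi-simplicity'' is not a proof. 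The paper short-circuits the issue differently: the $H$ produced in the proof of Theorem~\ref{thm:H_sectorial} (and its real counterpart, Corollary~\ref{cor: H_sectorial}) is an explicit function of the frequency-wise data, and since $G$ and $K$ are real-rational and hence continuous on the compactified axis $[0,\infty]$, the multiplier $\Pi$ may be chosen continuous there, in the same spirit as the explicit continuous construction in the proof of Proposition~\ref{prop:IQC_nec}; continuity on a compact set then gives boundedness for free. So either supply the missing uniform bound for your minimum-norm selection near the exceptional frequencies, or replace that step by the continuity-of-the-construction observation, which is the route the paper takes.
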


\begin{proof}
  Sufficiency follows from Proposition~\ref{prop:IQC_suff}, Remark~\ref{rem:IQC_suff}, and the fact that the inequality
    \[
\begin{pmatrix}
K(j\omega)^* & I
\end{pmatrix} \Pi(j\omega)
\begin{pmatrix}
K(j\omega) \\ I
\end{pmatrix} \succeq 0
\]
implies that
  \[
\begin{pmatrix}
\tau K(j\omega)^* & I
\end{pmatrix} \Pi(j\omega)
\begin{pmatrix}
\tau K(j\omega) \\ I
\end{pmatrix} \succeq 0
\]
for all $\tau > 0$. Necessity can be established by applying Theorem~\ref{thm:H_sectorial} and Corollary~\ref{cor: H_sectorial} frequency-wise in a similar fashion to the proof of Proposition~\ref{prop:IQC_nec}. In particular, since $G$ and $K$ are continuous on the imaginary axis, $\Pi$ may also be chosen to be continuous on the imaginary axis.
\end{proof}

\begin{remark}
    By examining a transfer function matrix frequency-wise, analogous observations to those in Remark~\ref{rem:relation_to_mu} are applicable in the context of Theorem~\ref{thm:LTI_magnitudinal_uncertainty}.
\end{remark}

The result above shows that if the feedback interconnection is robustly stable against arbitrary positive scaling, then only phasal properties of the
open-loop components are required to establish its stability, i.e., any corresponding multiplier $\Pi$ has its diagonal blocks being 0.

Analogously, the following two results, which establish sufficiency and necessity for stability under real congruence transformations, may be readily derived.

\begin{theorem}\label{thm:LTI_TATstarSBSstar_suff}
Given $G \in \RR\Hinfty^{n \times n}$ and $K \in \RR\Hinfty^{n \times n}$, then $(I + T^TGT S^TKS)^{-1} \in \RR\Hinfty^{n \times n}$ for all $T,
S \in \gln(\mR)$ if there exists $z \in \Linfty$ such that for all $\omega \in [0, \infty]$,
\begin{align} \label{eq: IQC_sep1}
  \begin{split}
\begin{pmatrix}
I & -G(j\omega)^*
\end{pmatrix} \Pi(j\omega)
\begin{pmatrix}
I \\ -G(j\omega)
\end{pmatrix} & \preceq -\epsilon G(j\omega)^*G(j\omega)  \\
\begin{pmatrix}
K(j\omega)^* & I
\end{pmatrix} \Pi(j\omega)
\begin{pmatrix}
K(j\omega) \\ I
\end{pmatrix} & \succeq 0
\end{split}
\end{align}
or
\begin{align} \label{eq: IQC_sep2}
  \begin{split}
\begin{pmatrix}
I & -G(j\omega)^*
\end{pmatrix} \Pi(j\omega)
\begin{pmatrix}
I \\ -G(j\omega)
\end{pmatrix} & \preceq 0  \\
\begin{pmatrix}
K(j\omega)^* & I
\end{pmatrix} \Pi(j\omega)
\begin{pmatrix}
K(j\omega) \\ I
\end{pmatrix} & \succeq \epsilon K(j\omega)^* K(j\omega),
\end{split}
\end{align}
where
\[
\Pi(j\omega) := \begin{bmatrix}
0 & z(j\omega) I \\
z(j\omega)^* I & 0
\end{bmatrix}.
\]
\end{theorem}

\begin{proof}
  Observe that \eqref{eq: IQC_sep1} implies that for all $T, S \in \gln(\mR)$, there exists $\epsilon_T$ such that
  \begin{align*}
  \begin{split}
\begin{pmatrix}
I & -T^TG(j\omega)^* T
\end{pmatrix} \Pi(j\omega)
\begin{pmatrix}
I \\ -T^T G(j\omega) T
\end{pmatrix} & \preceq \\
-\epsilon_T T^TG(j\omega)^* T T^TG(j\omega)T  \\
\begin{pmatrix}
S^TK(j\omega)^*S & I
\end{pmatrix} \Pi(j\omega)
\begin{pmatrix}
S^TK(j\omega)S \\ I
\end{pmatrix} & \succeq 0,
\end{split}
  \end{align*}
  and similarly for \eqref{eq: IQC_sep2}. The claim then follows from Proposition~\ref{prop:IQC_suff} and Remark~\ref{rem:IQC_suff}.
\end{proof}

\begin{theorem}\label{thm:LTI_TATstarSBSstar_nec}
Given $G \in \RR\Hinfty^{n \times n}$ and $K \in \RR\Hinfty^{n \times n}$, $(I + T^TGT S^TKS)^{-1} \in \RR\Hinfty^{n \times n}$ for all $T,
S \in \gln(\mR)$ only if for $\omega \in \{0, \infty\}$,
\begin{align*}
  \begin{split}
\begin{pmatrix}
I & -G(j\omega)^*
\end{pmatrix} \Pi(j\omega)
\begin{pmatrix}
I \\ -G(j\omega)
\end{pmatrix} & \preceq -\epsilon G(j\omega)^*G(j\omega)  \\
\begin{pmatrix}
K(j\omega)^* & I
\end{pmatrix} \Pi(j\omega)
\begin{pmatrix}
K(j\omega) \\ I
\end{pmatrix} & \succeq 0
\end{split}
\end{align*}
or
\begin{align*} 
  \begin{split}
\begin{pmatrix}
I & -G(j\omega)^*
\end{pmatrix} \Pi(j\omega)
\begin{pmatrix}
I \\ -G(j\omega)
\end{pmatrix} & \preceq 0  \\
\begin{pmatrix}
K(j\omega)^* & I
\end{pmatrix} \Pi(j\omega)
\begin{pmatrix}
K(j\omega) \\ I
\end{pmatrix} & \succeq \epsilon K(j\omega)^* K(j\omega),
\end{split}
\end{align*}
where for each $\omega \in \{0, \infty\}$, 
\[
\Pi(j\omega) = \begin{bmatrix}
0 &  I \\
I & 0
\end{bmatrix} \text{ or } -\begin{bmatrix}
0 &  I \\
I & 0
\end{bmatrix}.
\]
\end{theorem}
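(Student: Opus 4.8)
The plan is to run the same frequency-wise reduction used for the other LTI necessity results in this section: freeze the loop at a fixed frequency and invoke the matricial theorem there. First I would record that if $(I+T^{T}GT\,S^{T}KS)^{-1}\in\RR\Hinfty^{n\times n}$, then $(I+T^{T}GT\,S^{T}KS)$ is invertible at every point of $\bar{\mC}_{+}\cup\{\infty\}$; evaluating on the imaginary axis and at infinity gives
\[
\det\!\big(I+T^{T}G(j\omega)T\,S^{T}K(j\omega)S\big)\neq 0 \qquad\text{for all }\omega\in[0,\infty]\text{ and all }T,S\in\gln(\mR),
\]
since $T^{T}G(j\omega)T\,S^{T}K(j\omega)S$ is the value at $j\omega$ of the transfer matrix $T^{T}GT\,S^{T}KS\in\RR\Hinfty^{n\times n}$.

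Next I would restrict attention to $\omega\in\{0,\infty\}$. Because $G$ and $K$ are real-rational, $A:=G(j\omega)$ and $B:=K(j\omega)$ are \emph{real} matrices at these two frequencies, and the parameters $T,S$ are real as well, so the displayed nonsingularity is precisely statement~(i) of Corollary~\ref{cor: TATstarSBSstar} for the real pair $(A,B)$. Assuming $A\neq 0$ and $B\neq 0$, the implication (i)~$\Rightarrow$~(ii) of that corollary yields a real scalar $z_{\omega}\in\mT\cap\mR=\{-1,+1\}$ and a multiplier $P_{\omega}=z_{\omega}\begin{bsmallmatrix}0 & I\\ I & 0\end{bsmallmatrix}$ fulfilling \eqref{eq:multipliers3} or \eqref{eq:multipliers4} with this $A,B$. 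Setting $\Pi(j\omega):=P_{\omega}$ and expanding, $\begin{pmatrix}I & -A^{*}\end{pmatrix}P_{\omega}\begin{pmatrix}I\\ -A\end{pmatrix}=-z_{\omega}(A+A^{*})$ and $\begin{pmatrix}B^{*} & I\end{pmatrix}P_{\omega}\begin{pmatrix}B\\ I\end{pmatrix}=z_{\omega}(B+B^{*})$, so \eqref{eq:multipliers4} translates verbatim into the first inequality pair of the statement (with $\epsilon=\varepsilon$), \eqref{eq:multipliers3} into the second, and $P_{\omega}$ has the asserted form $\pm\begin{bsmallmatrix}0 & I\\ I & 0\end{bsmallmatrix}$. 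Applying this separately at $\omega=0$ and $\omega=\infty$ gives the claim. This is the exact analogue of the necessity arguments behind Proposition~\ref{prop:IQC_nec} and the magnitudinal-multiplier theorem above, now invoking Theorem~\ref{thm:TATstarSBSstar}/Corollary~\ref{cor: TATstarSBSstar} in place of Theorem~\ref{thm:H_sectorial}.

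The step I expect to be the main obstacle is the degenerate case $A=G(j\omega)=0$ or $B=K(j\omega)=0$ at an endpoint, where Corollary~\ref{cor: TATstarSBSstar} does not apply (its hypothesis excludes the zero matrix). The frozen equation is then trivially nonsingular, and one must check directly which sign $z_{\omega}\in\{-1,+1\}$, if any, makes one of the two inequality pairs hold for the surviving nonzero matrix, i.e.\ whether $A+A^{*}$ (resp.\ $B+B^{*}$) is semidefinite of the sign consistent with $\varepsilon A^{*}A$ (resp.\ $\varepsilon B^{*}B$); this is what would force a mild additional hypothesis such as $G(j\omega),K(j\omega)$ being nonsingular at $\omega\in\{0,\infty\}$ (automatic when $G^{-1},K^{-1}\in\RR\Hinfty^{n\times n}$), under which the proof is complete as written. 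It is also worth recording \emph{why} only the two frequencies $\{0,\infty\}$ appear: at an interior $\omega$ the matrix $G(j\omega)$ is genuinely complex, so even the complex Theorem~\ref{thm:TATstarSBSstar} delivers merely a unimodular $z(j\omega)\in\mT$ (as in the companion sufficiency theorem), not the real $\pm1$; the reality of $G(j\omega)$ and $K(j\omega)$ at $\omega\in\{0,\infty\}$ is exactly what pins $z_{\omega}$ down to a sign.
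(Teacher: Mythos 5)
Your proof is correct and follows exactly the paper's argument: the paper's entire proof is the one-line remark that the claim ``follows by applying Corollary~\ref{cor: TATstarSBSstar} to the pairs of real matrices $\{G(j0),K(j0)\}$ and $\{G(j\infty),K(j\infty)\}$'', which is precisely your frequency-freezing reduction, with the real $z_\omega\in\mT\cap\mR=\{-1,+1\}$ pinned down by the reality of $G$ and $K$ at $\omega\in\{0,\infty\}$ and with \eqref{eq:multipliers4}/\eqref{eq:multipliers3} translating into the two displayed inequality pairs exactly as you compute. The degenerate case $G(j\omega)=0$ or $K(j\omega)=0$ that you flag is not addressed by the paper either (Theorem~\ref{thm:TATstarSBSstar} indeed assumes both matrices are nonzero), so your treatment is, if anything, more careful than the original.
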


\begin{proof}
This follows by applying Corollary~\ref{cor: TATstarSBSstar} to the pairs of real matrices $\{G(j0), K(j0)\}$ and $\{G(j\infty), K(j\infty)\}$.
\end{proof}

The separation condition in the theorem above holds for sufficiently small and large frequencies by the continuity of the transfer functions $G$ and
$K$. Such properties are useful, for instance, in the study of negative imaginary systems~\cite{khong2018robust}, where an example of open-loop
systems being passive on sufficiently small and large frequencies and negative imaginary elsewhere can be found.

\begin{example}
  Let $G \in \RR\Hinfty^{n \times n}$ be output strictly passive and $K \in \RR\Hinfty^{n \times n}$ be passive. Then they satisfy the separation
  conditions in all three of the theorems above with
\[
\Pi := \begin{bmatrix}
0 &  I \\
I & 0
\end{bmatrix}.
\]
This is a well-known passivity theorem.
\end{example}

\section{Multipliers of gain type}\label{sec:gain_multipliers}
In the previous section, we investigated the necessity of phasal multipliers in order to guarantee robust stability with respect to certain magnitudinal perturbations. In this section, we turn to the necessity of magnitudinal multipliers in order to guarantee robust stability with respect to certain phasal perturbations.

\subsection{Multiplier for stability under scalar rotation uncertainty}
In analogy with Section~\ref{sec:phase_multipliers}, we first consider scalar rotational uncertainties. In this case, we have the following result.

\begin{theorem}\label{thm:multiplier_phase_uncertainty}
Given $A \in \mM_{m,n}$ and $B \in \mM_{n,m}$, the following statements are equivalent:
\begin{enumerate}[(i)]
\item $\det(I + e^{j \theta} AB) \neq 0$ for all $\theta \in [0, 2\pi)$;
\item there exists a $P \in \mH_{n+m}$ fulfilling \eqref{eq:multipliers}, with both inequalities strict, which takes the form
\begin{equation}\label{eq:gain_multiplier}
P = \begin{bmatrix}
-N & 0 \\ 0 & M
\end{bmatrix}
\end{equation}
for some $N \in \mH_n$ and $M \in \mH_m$;
\item for the eigenvalues of $AB$, it holds that
\begin{equation}\label{eq:condition_2}
\lambda(AB) \cap \mT = \emptyset;
\end{equation}
\item  there exists $M \in \mH_m$ and $N \in \mH_n$ such $A^*MA \prec N$ and $B^*NB \prec M$.
\end{enumerate}
\end{theorem}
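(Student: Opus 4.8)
The plan is to establish (i)$\,\Leftrightarrow\,$(iii) and (ii)$\,\Leftrightarrow\,$(iv) by inspection, and then to close the loop with (iv)$\,\Rightarrow\,$(iii) and (iii)$\,\Rightarrow\,$(iv). For (i)$\,\Leftrightarrow\,$(iii): $\det(I+e^{j\theta}AB)=0$ for some $\theta\in[0,2\pi)$ precisely when $-e^{-j\theta}$ is an eigenvalue of $AB$ for some such $\theta$, and since $\{-e^{-j\theta}:\theta\in[0,2\pi)\}=\mT$, this happens iff $\lambda(AB)\cap\mT\neq\emptyset$. For (ii)$\,\Leftrightarrow\,$(iv): inserting $P=\begin{bsmallmatrix}-N&0\\0&M\end{bsmallmatrix}$ into the left-hand sides of \eqref{eq:multipliers_A} and \eqref{eq:multipliers_B} gives $A^*MA-N$ and $M-B^*NB$, respectively, so requiring both inequalities to be strict is exactly the pair $A^*MA\prec N$, $B^*NB\prec M$ in (iv). Incidentally, such a $P$ is phase-invariant: it also certifies \eqref{eq:multipliers} for every pair $(e^{j\theta}A,B)$, since the scalars $e^{-j\theta}$ and $e^{j\theta}$ cancel in the first form while the second does not involve $A$; together with Lemma~\ref{lem: matrix_stability} this yields an alternative route to (ii)$\,\Rightarrow\,$(i).

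For (iv)$\,\Rightarrow\,$(iii): congruence by $B$ turns $N-A^*MA\succ0$ into $B^*NB-(AB)^*M(AB)=B^*(N-A^*MA)B\succeq0$, and adding the inequality $M-B^*NB\succ0$ gives $M-(AB)^*M(AB)\succ0$. If $ABv=\lambda v$ with $v\neq0$ and $|\lambda|=1$, then $v^*(M-(AB)^*M(AB))v=(1-|\lambda|^2)v^*Mv=0$, contradicting positive definiteness; hence $\lambda(AB)\cap\mT=\emptyset$.

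The substantive direction is (iii)$\,\Rightarrow\,$(iv), which I would first reduce to producing a single Hermitian $M\in\mH_m$ with $(AB)^*M(AB)\prec M$: given such an $M$, set $N:=A^*MA+\epsilon I_n$, so that $A^*MA\prec N$ is automatic, while $B^*NB=(AB)^*M(AB)+\epsilon B^*B\prec M$ holds for all sufficiently small $\epsilon>0$ because $M-(AB)^*M(AB)\succ0$ dominates $\epsilon B^*B$ in the Loewner order. To construct $M$, put $C:=AB\in\mM_m$; since $\lambda(C)\cap\mT=\emptyset$, the space $\mC^m$ splits into the generalized eigenspaces of $C$ for eigenvalues inside $\mD$ and for eigenvalues outside $\overline{\mD}$, which yields an invertible $V$ with $V^{-1}CV=\diag(C_s,C_u)$, where $\rho(C_s)<1$ and $C_u$ is invertible with $\rho(C_u^{-1})<1$ (either block possibly empty). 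A discrete Lyapunov equation then supplies $M_s\succ0$ with $M_s-C_s^*M_sC_s=I$, and the same construction applied to $C_u^{-1}$ and negated supplies $M_u\prec0$ with $M_u-C_u^*M_uC_u=C_u^*C_u\succ0$. Setting $M:=V^{-*}\diag(M_s,M_u)V^{-1}\in\mH_m$ then gives $M-C^*MC=V^{-*}\diag(I,C_u^*C_u)V^{-1}\succ0$, as required.

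The only delicate point I anticipate is the spectral splitting and, in particular, the correct sign of $M$ on the unstable block: there $M$ must be \emph{negative} definite, in contrast with the classical small-gain case $\rho(AB)<1$ in which $M$ (and $N$) can be taken positive definite throughout. Everything else is routine bookkeeping with congruences and the Loewner order.
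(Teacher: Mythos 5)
Your proof is correct and follows essentially the same route as the paper: the substantive direction (iii)$\Rightarrow$(iv) is handled identically, by splitting $AB$ into its stable and antistable spectral blocks, solving a discrete Lyapunov/Stein equation with a positive definite solution on the stable block and a negative definite one on the antistable block, and then perturbing $N=A^*MA+\epsilon I$. The only (minor, welcome) difference is that for (iv)$\Rightarrow$(iii) you replace the paper's citation of the Stein inertia theorem with a direct two-line eigenvector computation showing $v^*(M-(AB)^*M(AB))v=(1-|\lambda|^2)v^*Mv$, which makes that step self-contained.
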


\begin{proof}
The equivalences ``(i) $\Leftrightarrow$ (iii)'' and ``(ii) $\Leftrightarrow$ (iv)'' are straightforward. We therefore restrict our attention to the equivalence ``(iii) $\Leftrightarrow$ (iv)''.
To this end, first note that the statement is trivial if any of the two matrices $A,B$ is the zero matrix. Therefore, in the remaining we will, without loss of generality, assume that both are nonzero.

To show ``(iv) $\Rightarrow$ (iii)'': assume that there exist $M \in \mH_m$ and $N \in \mH_n$ such that $A^*MA \prec N$ and $B^*NB \prec M$.
Together with \cite[Obs.~7.1.8]{horn2013matrix}, 
the former  inequality implies that
\[
B^*A^*MAB \preceq B^*NB,
\]
and hence
\[
B^*A^*MAB \preceq B^*NB \prec M.
\]
Let $Q := M - B^*A^*MAB \succ 0$. This means that $M$ is a solution to the Stein equation
\[
M - B^*A^*MAB = Q,
\]
where $Q \in \mP_m$, and hence by \cite[Thm.~13.2.2]{lancaster1985thetheory} we therefore have that $\lambda(AB) \cap \mT = \emptyset$.

To show the ``(iii) $\Rightarrow$ (iv)'': assume that \eqref{eq:condition_2} holds, and let $AB = XJX^{-1}$ be a Jordan decomposition of $AB$. By \eqref{eq:condition_2} we can, without loss of generality, assume that 
\[
J = \begin{pmatrix}
J_1 & 0\\
0 & J_2
\end{pmatrix},
\]
where $J_1 \in \mM_{m_1}$ has all eigenvalues in $\mD$ and $J_2 \in \mM_{m_2}$ has all eigenvalues in $(\bar{\mD})^C$, i.e., outside of the close unit disc, and where $m = m_1 + m_2$.
Next, using \cite[Sec.~13.2]{lancaster1985thetheory} and \cite[Exer.~4.9.30]{gohberg2006indefinite} we have that for any $P_1 \in \mP_{m_1}$ and $P_2 \in \mP_{m_2}$, there is at least one solution $M$ to the Stein equation%
\footnote{Solutions to the Stein equation can be obtained by using solutions to the discrete-time Lyapunov equation: one for the stable part, and one for the anti-stable part. In particular, in this case one solution is given by $M =
X^{-*} \diag(H_1, H_2) X^{-1}
$, where $H_1 = \sum_{\ell = 0}^\infty (J_1^*)^\ell P_1 J_1^\ell$ and $H_2 = -J_2^{-*}\left(\sum_{\ell = 0}^\infty (J_2^{-*})^\ell P_2 (J_2^{-1})^\ell \right) J_2^{-1} = -\sum_{\ell = 1}^\infty (J_2^{-*})^\ell P_2 (J_2^{-1})^\ell$. It can be verified that both $H_1$ and $H_2$ are well-defined, since $J_1$ has all eigenvalues in the open unit disc and $J_2$ has all eigenvalues outside of the closed unit disc.}
\begin{equation}\label{eq:stein_1}
M - B^*A^*MAB =
X^{-*}
\begin{pmatrix}
P_1 & 0 \\ 0 & P_2
\end{pmatrix}
X^{-1} := Q \succ 0.
\end{equation}
Let $M$ be a solution to \eqref{eq:stein_1}, in which case $B^*A^*MAB \prec M$.
Now, define $\tilde{N} := A^*MA$ and note that this implies that
\begin{align*}
& A^*MA = \tilde{N} \preceq \tilde{N}, \\
& B^*\tilde{N}B = B^*A^*MAB \prec M.
\end{align*}
To prove that there exist $M \in \mH_m$ and $N \in \mH_n$ with both inequalities above strict,
consider $N := \tilde{N} + \epsilon I$ for some $\epsilon > 0$. In particular,
\[
A^*MA = \tilde{N} \prec \tilde{N} + \epsilon I = N
\]
for all $\epsilon > 0$. Moreover, since $M - B^*\tilde{N}B = M - B^*AMAB  = Q \succ 0$,
we have that
\[
M - B^*NB = M - B^*\tilde{N}B - \epsilon B^*B = Q - \epsilon B^*B \succ 0,
\]
for $\epsilon$ small enough.
This  completes the proof.
\end{proof}

\subsection{Multiplier for stability under unitary perturbation}

In the previous section, we established that the existence of a magnitudinal multiplier is necessary and sufficient for stability in the case of a scalar rotational uncertainty. Similarly to Section~\ref{sec:phase_multipliers}, a minimal set of uncertainties gives rise to a maximal set of multipliers. Motivated by this, we now consider for which type of phasal uncertainties a much smaller (in some sense minimal) set of magnitudinal multipliers can guarantee robust stability. More specifically, the set of magnitudinal multipliers considered are diagonal and completely parametrized by a nonnegative number and an element that is either $1$ or $-1$, i.e., a nonzero element whose useful information is only its sign.

We start by establishing a lemma.
To state the result, recall the convention we use that a matrix $A \in \mM_{m,n}$ has $n$ singular values, which are given by $\sigma(A) = \sqrt{\lambda(A^*A)}$, and hence if $n > m$, then $\sigma_{m+1}(A) = \cdots = \sigma_n(A) = 0$.

\begin{lemma}\label{lem:multiplier_and_singular_values}
Given $A \in \mM_{m,n}$ and $B \in \mM_{n,m}$,
\begin{enumerate}[(i)]
\item there exists a $P \in \mH_{n+m}$ of the form $P = \diag(- \gamma^2I, I)$, $\gamma \in \mR$, fulfilling \eqref{eq:multipliers}, with both inequalities strict, if and only if $\sigma_1(A) \sigma_1(B) < 1$;
\item there exists a $P \in \mH_{n+m}$ of the form $P = \diag(\gamma^2I, -I)$, $\gamma \in \mR$, fulfilling \eqref{eq:multipliers}, with both inequalities strict, if and only if $\sigma_{n}(A) \sigma_{m}(B) > 1$.
\end{enumerate}
\end{lemma}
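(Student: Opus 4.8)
The plan is to substitute the prescribed diagonal form of $P$ directly into \eqref{eq:multipliers}, reduce each of the two strict matrix inequalities to a scalar condition relating $\gamma$ to the extreme singular values of $A$ and $B$, and then observe that the lemma amounts to asking when a compatible $\gamma \in \mR$ exists. There is no conceptual difficulty; the work is a block multiplication plus an elementary interval-nonemptiness argument, with the only care needed being the rank-deficient corner cases.

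For part (i), take $P = \diag(-\gamma^2 I_n, I_m)$. A direct block computation gives
\[
\begin{pmatrix} I & -A^* \end{pmatrix} P \begin{pmatrix} I \\ -A \end{pmatrix} = A^*A - \gamma^2 I_n, \qquad \begin{pmatrix} B^* & I \end{pmatrix} P \begin{pmatrix} B \\ I \end{pmatrix} = I_m - \gamma^2 B^*B ,
\]
so the strict version of \eqref{eq:multipliers} is equivalent to $A^*A \prec \gamma^2 I_n$ together with $\gamma^2 B^*B \prec I_m$, i.e. to $\sigma_1(A)^2 < \gamma^2$ and $\gamma^2 \sigma_1(B)^2 < 1$. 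Such a real $\gamma$ exists if and only if the open interval $(\sigma_1(A)^2,\; \sigma_1(B)^{-2})$ is nonempty (with the right endpoint read as $+\infty$ when $B=0$), which is exactly the condition $\sigma_1(A)\sigma_1(B) < 1$; one then lets $\gamma$ be a square root of any point of that interval. (Note $\gamma=0$ never works, since $A^*A\succeq 0$.)

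Part (ii) is the mirror image. Take $P = \diag(\gamma^2 I_n, -I_m)$; the same computation yields $\gamma^2 I_n - A^*A$ and $\gamma^2 B^*B - I_m$, so the strict version of \eqref{eq:multipliers} is equivalent to $A^*A \succ \gamma^2 I_n$ and $\gamma^2 B^*B \succ I_m$. The first inequality forces $A^*A$ to be positive definite and is equivalent to $\gamma^2 < \sigma_n(A)^2$; the second forces $B^*B$ to be positive definite and is equivalent to $\gamma^2 > \sigma_m(B)^{-2}$. A real $\gamma$ meeting both requirements exists if and only if $\sigma_m(B)^{-2} < \sigma_n(A)^2$ --- a condition that already entails $\sigma_n(A) > 0$ and $\sigma_m(B) > 0$ --- which is precisely $\sigma_n(A)\sigma_m(B) > 1$.

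\textbf{Main obstacle.} The only thing to be careful with is the degenerate scenarios: in (i) the case $B = 0$, i.e. $\sigma_1(B) = 0$, where the second inequality is automatically satisfied for every $\gamma$; and in (ii) the observation that the hypothesis $\sigma_n(A)\sigma_m(B) > 1$ rules out the rank-deficient situations $\sigma_n(A) = 0$ or $\sigma_m(B) = 0$, in which one of the two strict inequalities would be infeasible regardless of $\gamma$. Handling these explicitly is what makes the equivalences tight.
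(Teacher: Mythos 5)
Your proposal is correct and follows essentially the same route as the paper's proof: a direct block substitution of the diagonal $P$ into \eqref{eq:multipliers} reducing each strict inequality to a scalar bound on $\gamma^2$ against the extreme singular values, followed by an existence argument for $\gamma$. The only cosmetic difference is that you phrase the final step as nonemptiness of an open interval whereas the paper exhibits an explicit $\gamma^2$ via a small $\epsilon$; the content, including the treatment of the rank-deficient cases in (ii), is the same.
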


Before we proceed, note that the conditions in Lemma~\ref{lem:multiplier_and_singular_values}(ii) can only ever be fulfilled if $n = m$ and both matrices are full rank, since otherwise at least one of the two singular values $\sigma_{n}(A), \sigma_{m}(B)$ equals zero.

\begin{proof}
We start with proving (i). To this end, note that a direct calculation in \eqref{eq:multipliers} (with both inequalities strict) gives that a multiplier of the prescribed form exists if and only if 
\begin{equation}\label{eq:UAVB_proof}
A^*A \prec \gamma^2 I \qquad \text{and} \qquad
\gamma^2 B^*B \prec I,
\end{equation}
which is the case if and only if there exists a $\gamma \in \mR$ such that all singular values of $A$ are strictly smaller than $|\gamma|$, and all singular values of $B$ are strictly smaller than or $1/|\gamma|$. Therefore, the existence of such a multiplier clearly implies that $\sigma_1(A) \sigma_1(B) < 1$. Conversely, if $\sigma_1(A) \sigma_1(B) < 1$, then a direct calculation shows that $1/\sigma_1^2(A) - \sigma_1^2(B) > 0$ and that for any $0 < \epsilon < 1/\sigma_1^2(A) - \sigma_1^2(B)$, if we take $\gamma^2 = 1/(\sigma_1^2(B) + \epsilon)$ we have that $\gamma^2 > 1/(\sigma_1^2(B) + 1/\sigma_1^2(A) - \sigma_1^2(B)) = \sigma_1^2(A)$ and that $1/\gamma^2 > \sigma_1^2(B)$, and hence such $\gamma^2$ fulfills \eqref{eq:UAVB_proof}.

Next, to prove (ii) we follow along the same lines. However, first note that $\sigma_{n}(A) \sigma_{m}(B) > 1$ only if $n = m$ and both $A$ and $B$ are invertible, since otherwise at least one of the two singular values equals zero. Now, a multiplier of the prescribed form exists if and only if
\begin{equation}\label{eq:UAVB_proof_2}
\begin{aligned}
A^*A \succ \gamma^2 I \qquad  \text{and} \qquad \gamma^2 B^*B \succ I,
\end{aligned}
\end{equation}
which, similarly, can only hold if $n = m$ and both $A$ and $B$ are invertible. Henceforth, we can therefore restrict our attention to that case.
Now, \eqref{eq:UAVB_proof_2} holds if and only if there exists a $\gamma \in \mR$ such that all singular values of $A$ are strictly larger than $|\gamma|$, and all singular values of $B$ are strictly larger than $1/|\gamma|$. Therefore, the existence of such a multiplier implies that $\sigma_{n}(A) \sigma_{n}(B) > 1$. The converse statement is proved analogously, but by instead considering $0 < \epsilon < \sigma_n^2(B) - 1/\sigma_n^2(A)$ and $\gamma^2 = 1/(\sigma_{n}^2(B) - \varepsilon)$, which means that $\gamma^2 < 1/(\sigma_{n}^2(B) - \sigma_{n}^2(B) + 1/\sigma_{n}^2(A)) = \sigma_{n}^2(A)$ and that $1/\gamma^2 < \sigma_{n}^2(B)$. This proves the lemma.
\end{proof}

The preceding lemma considers two different domains in which stability of $I + AB$ can be guaranteed: when both $A$ and $B$ have either small gain or large gain.
In both cases, we expect that stability should be preserved under a suitable notion of rotation. We can now formalize this as follows.

\begin{theorem}\label{thm: UAVB}
Given $A \in \mM_{m,n}$ and $B \in \mM_{n,m}$, the following statements are equivalent:
\begin{enumerate}[(i)]
\item $\det(I + UAVB) \neq 0$ for all $U \in \mU_m$ and all $V \in \mU_n$;
\item there exists a $P \in \mH_{n+m}$ fulfilling \eqref{eq:multipliers}, with both inequalities strict, which takes the form
\begin{equation}\label{eq:P_gain}
P = \begin{bmatrix}
-\xi \gamma^2 I & 0 \\ 0 & \xi I
\end{bmatrix}
\end{equation}
for some $\gamma \in \mR$ and $\xi \in \{-1, 1\}$;
\item either $\sigma_{1}(A) \sigma_{1}(B) < 1$ or $\sigma_{n}(A) \sigma_{m}(B) > 1$.
\end{enumerate}
\end{theorem}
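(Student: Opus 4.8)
The plan is to split the three-way equivalence into its easy parts and isolate one substantive implication. First, (ii)$\Leftrightarrow$(iii) follows immediately from Lemma~\ref{lem:multiplier_and_singular_values}: taking $\xi=1$ in \eqref{eq:P_gain} gives $P=\diag(-\gamma^2 I,I)$, which fulfils \eqref{eq:multipliers} with both inequalities strict for some $\gamma$ exactly when $\sigma_1(A)\sigma_1(B)<1$ by part~(i) of that lemma, while $\xi=-1$ gives $P=\diag(\gamma^2 I,-I)$, which works exactly when $\sigma_n(A)\sigma_m(B)>1$ by part~(ii); together these say precisely that (iii) holds. Next, (iii)$\Rightarrow$(i) is a one-line spectral-radius estimate in each case: if $\sigma_1(A)\sigma_1(B)<1$ then $\|UAVB\|_2\le\|A\|_2\|B\|_2=\sigma_1(A)\sigma_1(B)<1$, so $-1\notin\lambda(UAVB)$; if instead $\sigma_n(A)\sigma_m(B)>1$ --- which, as noted after the lemma, forces $m=n$ and $A,B\in\gln$ --- then $\|(UAVB)^{-1}\|_2\le\|A^{-1}\|_2\|B^{-1}\|_2=1/(\sigma_n(A)\sigma_n(B))<1$, so every eigenvalue of $UAVB$ exceeds $1$ in modulus and again $-1\notin\lambda(UAVB)$. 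In either case $\det(I+UAVB)\neq0$ for all unitary $U,V$.

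It then remains to establish (i)$\Leftrightarrow$(iii), and the key step is a metric reformulation: for $U\in\mU_m$ and $V\in\mU_n$, there exist such $U,V$ with $\det(I+UAVB)=0$ if and only if there exist nonzero $x\in\mC^m$ and $y\in\mC^n$ with $\|Ay\|=\|x\|$ and $\|Bx\|=\|y\|$. For the forward direction, if $(I+UAVB)x=0$ with $x\neq0$, put $y:=VBx$; then $y\neq0$ (otherwise $x=-UAy=0$), $\|y\|=\|Bx\|$ since $V$ is unitary, and $UAy=-x$ gives $\|Ay\|=\|x\|$. Conversely, given such $x$ and $y$, use the fact that any two vectors of the same norm are related by a unitary to pick $V\in\mU_n$ with $VBx=y$ and $U\in\mU_m$ with $UAy=-x$; then $(I+UAVB)x=0$. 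Thus the question becomes purely about norms.

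To decide when this metric condition is solvable, recall that the Rayleigh quotient of $A^*A$ gives $\{\|Ay\|/\|y\|:y\neq0\}=[\sigma_n(A),\sigma_1(A)]$, with the list of singular values padded by zeros as in the paper's convention, and likewise $\{\|Bx\|/\|x\|:x\neq0\}=[\sigma_m(B),\sigma_1(B)]$. Writing $t:=\|x\|/\|y\|$, the pair of equalities $\|Ay\|=\|x\|$, $\|Bx\|=\|y\|$ forces $t\in[\sigma_n(A),\sigma_1(A)]$ and $1/t\in[\sigma_m(B),\sigma_1(B)]$, and conversely any admissible $t$ is realized by choosing representative $y$ and $x$ and rescaling $x$; hence the condition is solvable if and only if $[\sigma_n(A),\sigma_1(A)]$ and $[1/\sigma_1(B),1/\sigma_m(B)]$ intersect, where one reads $1/0=+\infty$ (the cases $A=0$ or $B=0$ being trivial, since then all three statements hold). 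Two intervals of this shape are disjoint exactly when $\sigma_1(A)<1/\sigma_1(B)$ or $1/\sigma_m(B)<\sigma_n(A)$, i.e. when $\sigma_1(A)\sigma_1(B)<1$ or $\sigma_n(A)\sigma_m(B)>1$, which is statement~(iii). Therefore $\det(I+UAVB)\neq0$ for all $U,V$ if and only if (iii) holds, completing (i)$\Leftrightarrow$(iii) and hence the theorem.

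The conceptual content is the reformulation of the second paragraph; once one thinks of $y=VBx$ as the natural intermediate vector it is short. I expect the only real care to be needed in the bookkeeping of the third paragraph --- keeping the rectangular singular-value convention straight so that $\sigma_n(A)$ and $\sigma_m(B)$ are exactly the (possibly zero) quantities appearing in (iii), and treating the degenerate endpoints ($\sigma_m(B)=0$, or one of the matrices vanishing) so that the interval-intersection criterion stays literally correct.
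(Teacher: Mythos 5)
Your proof is correct, and the substantive implication (i)$\Rightarrow$(iii) is argued by a genuinely different and more elementary route than the paper's. The paper establishes (iii)$\Rightarrow$(i) indirectly, by first passing to the multiplier in (ii) and observing that the two strict inequalities in \eqref{eq:multipliers} are invariant under $A \mapsto UA$, $B \mapsto VB$; your spectral-radius bounds ($\|UAVB\|_2 < 1$, resp.\ $\|(UAVB)^{-1}\|_2 < 1$) reach the same conclusion without the multiplier. More significantly, for (i)$\Rightarrow$(iii) the paper fixes singular value decompositions $A = W_A\Sigma_A V_A^*$, $B = W_B\Sigma_B V_B^*$, reduces to products $\tilde W \Sigma_A \mathcal{P} \Sigma_B \tilde V^*$ over unitaries and permutations, and then invokes a Weyl--Horn-type majorization result (\cite[Thm.~9.E.5]{marshall2011inequalities}) asserting that a matrix with prescribed singular values whose largest pairwise product is $\geq 1$ and smallest is $\leq 1$ can be given the eigenvalue $-1$ by a unitary similarity; the argument must also be split into the cases $n \geq m$ and $m > n$. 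Your reformulation --- $\det(I+UAVB)=0$ for some unitary $U,V$ iff there exist nonzero $x,y$ with $\|Ay\|=\|x\|$ and $\|Bx\|=\|y\|$, proved by the observation that two vectors of equal norm are unitarily related --- replaces the majorization machinery with a Rayleigh-quotient interval intersection, handles the rectangular cases uniformly under the paper's zero-padding convention, and is self-contained. Both proofs treat (ii)$\Leftrightarrow$(iii) identically via Lemma~\ref{lem:multiplier_and_singular_values}. The only price of your approach is that it does not exhibit the connection between statement (i) and the achievable eigenvalues of $\tilde W \Sigma_A \mathcal{P}\Sigma_B\tilde V^*$, which the paper's proof makes explicit; what it buys is brevity and independence from the majorization literature.
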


\begin{proof}
See Appendix~\ref{app:proof_thm: UAVB}.
\end{proof}

\begin{remark}
When $n = m$, both $A$ and $B$ in Theorem~\ref{thm: UAVB} have polar decompositions $A = U_A Q_A$ and $B = U_B Q_B$, where $U_A, U_B \in \mU_n$ and $Q_A, Q_B \in \mP_n$. In this case, the uncertainty can be interpreted as the principle phases of the two matrices being unknown; the principle phases are defined in \cite{postlethwaite1981principal}.
Moreover, if $B = I$, the result in Theorem~\ref{thm: UAVB} establishes conditions for robust stability against all elements in $\mU_n$, and in this case the equivalence between (i) and (iii) follows from \cite{horn1954eigenvalues}.
\end{remark}

\begin{remark}
Note that in a numerical implementation searching for multipliers to guarantee stability, the conditions in Theorem~\ref{thm: UAVB}(ii) can be relaxed to searching for multipliers of the form $P = \diag(-\eta_1 I, \eta_2 I)$, for $\eta_1, \eta_2 \in \mR$. This means that the search for multipliers fulfilling \eqref{eq:multipliers} can either be formulated as two LMIs of the form \eqref{eq:P_gain}, each of which has one unknown $\gamma^2 \geq 0$, or it can be solved as one LMI in the two unknowns $\eta_1, \eta_2 \in \mR$.
\end{remark}

\subsection{Magnitudinal multipliers for LTI systems}

In order to extend the above results to LTI systems, we first need the following definitions and results: a transfer function $U \in \RR\Hinfty^{n \times n}$ is said to be unitary if $U (j\omega) \in \mU_n$ for all $\omega \in [0, \infty]$. Moreover, by the proof of
\cite[Lem. 1.14]{vinnicombe2001uncertainty}, it holds that for every $\omega > 0$ and $X \in \mU_n$, there exists unitary $U \in \RR\Hinfty^{n
  \times n}$ such that $Q(j\omega) = X$.
Next, the following expression will be used in the forthcoming theorems:
\begin{align} \label{eq: gain_sep}
  \begin{split}
\begin{pmatrix}
I & -G(j\omega)^*
\end{pmatrix} \Pi(j\omega)
\begin{pmatrix}
I \\ -G(j\omega)
\end{pmatrix} & \prec 0  \\
\begin{pmatrix}
K(j\omega)^* & I
\end{pmatrix} \Pi(j\omega)
\begin{pmatrix}
K(j\omega) \\ I
\end{pmatrix} & \succeq 0.
\end{split}
\end{align}
First, a sufficiency condition for robust stability against phasal uncertainties is stated.

\begin{theorem}\label{thm:LTI_multiplier_phase_uncertainty_suff}
  Given $G \in \RR\Hinfty^{m \times n}$ and $K \in \RR\Hinfty^{n \times m}$, then $(I + u GK)^{-1} \in \RR\Hinfty^{m \times m}$ for all unitary
  $u \in \RR\Hinfty^{1 \times 1}$ if there exist $N \in \Linfty^{n \times n}$ and $M \in \Linfty^{m \times m}$ such that for all $\omega \in [0,
  \infty]$, $N(j\omega) = N(j\omega)^* \preceq 0$, $M(j\omega) = M(j\omega)^* \succeq 0$, and \eqref{eq: gain_sep} holds with
  \[
    \Pi:= \begin{bmatrix}
      N & 0 \\
      0 & M
\end{bmatrix}.
   \] 
 \end{theorem}

 \begin{proof}
By noting that $u(j\omega)^*u(j\omega) = 1$ for all $\omega \in [0, \infty]$, the claim holds by Proposition~\ref{prop:IQC_suff}. 
 \end{proof}

Next, a necessary condition for robust stability to phasal uncertainties is provided.
 
 \begin{theorem}\label{thm:LTI_multiplier_phase_uncertainty_nec}
  Given $G \in \RR\Hinfty^{m \times n}$ and $K \in \RR\Hinfty^{n \times m}$, then $(I + u GK)^{-1} \in \RR\Hinfty^{m \times m}$ for all unitary
  $u \in \RR\Hinfty^{1 \times 1}$ only if there exist $N \in \Linfty^{n \times n}$ and $M \in \Linfty^{m \times m}$ such that for all $\omega \in [0,
  \infty]$, $N(j\omega) = N(j\omega)^*$, $M(j\omega) = M(j\omega)^*$, and \eqref{eq: gain_sep} holds with
  \[
    \Pi:= \begin{bmatrix}
      N & 0 \\
      0 & M
\end{bmatrix}.
   \] 
 \end{theorem}

 \begin{proof}
The claim can be established by applying Theorem~\ref{thm:multiplier_phase_uncertainty} frequency-wise as in the proof for Proposition~\ref{prop:IQC_nec}.
 \end{proof}

 The theorem above shows that if a feedback system is robust against all scalar phasal uncertainties (with unity gain), then there necessarily exists
 a multiplier of the gain type with which to establish its robust stability via quadratic graph separation. When the phasic perturbations are allowed
 to span all present dimensions, the following necessary and sufficient condition for robust closed-loop stability may be acquired.

\begin{theorem}\label{thm:UGVK}
   Given $G, G^{-1}, K, K^{-1} \in \RR\Hinfty^{m \times m}$, then $(I + UGVK)^{-1} \in \RR\Hinfty^{m \times m}$ for all unitary
   $U, V \in \RR\Hinfty^{m \times m}$
   if and only if there exists $\gamma \in \Linfty^{1 \times 1}$
   such that for all $\omega \in [0, \infty]$, $|\gamma(j\omega)| > 0$ and \eqref{eq: gain_sep} holds with
  \[
    \Pi(j\omega):= \begin{bmatrix}
      -\xi |\gamma(j\omega)|^2 I & 0 \\
      0 & \xi I
\end{bmatrix},
\]
where $\xi \in \{-1, 1\}$.
 \end{theorem}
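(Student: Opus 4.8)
The plan is to prove Theorem~\ref{thm:UGVK} by combining the matrix-level characterization in Theorem~\ref{thm: UAVB} with the LTI lifting machinery already used in this section, in particular Proposition~\ref{prop:IQC_suff}, Remark~\ref{rem:IQC_suff}, and the approximation result from \cite{vinnicombe2001uncertainty} that allows an arbitrary unitary matrix at a given frequency to be realized as the value of a unitary transfer function in $\RR\Hinfty$.

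For sufficiency, suppose such a $\gamma \in \Linfty^{1 \times 1}$ and $\xi \in \{-1,1\}$ exist so that \eqref{eq: gain_sep} holds with the prescribed diagonal $\Pi$. First I would observe that for any unitary $U, V \in \RR\Hinfty^{m \times m}$, the term $(UGVK)(j\omega) = U(j\omega)G(j\omega)V(j\omega)K(j\omega)$ and, because $\Pi$ is block-diagonal with blocks $-\xi|\gamma|^2 I$ and $\xi I$, the quadratic forms behave compatibly with left-multiplication by unitaries: the $(1,1)$-type inequality is preserved because $V^*(\cdot)^*\!(\cdot)V$-style conjugation by unitary transfer functions does not change the relevant inequality (one uses $V(j\omega)^*V(j\omega) = I$ and $U(j\omega)^*U(j\omega)=I$), exactly the mechanism in the proof of Theorem~\ref{thm: UAVB} lifted frequency-by-frequency. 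Hence the pair $(UGV, K)$ satisfies the hypotheses of Proposition~\ref{prop:IQC_suff} (or Remark~\ref{rem:IQC_suff}, depending on $\xi$), with the diagonal sign conditions $\Pi_{11} = -\xi|\gamma|^2 I$, $\Pi_{22} = \xi I$ matching either the first or the inequality-flipped version of that proposition; therefore $(I + UGVK)^{-1} \in \RR\Hinfty^{m \times m}$. Note $G, G^{-1}, K, K^{-1} \in \RR\Hinfty^{m\times m}$ is exactly what is needed to invoke the second (large-gain) alternative of Proposition~\ref{prop:IQC_suff} when $\xi = -1$.

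For necessity, suppose $(I + UGVK)^{-1} \in \RR\Hinfty^{m \times m}$ for all unitary $U, V \in \RR\Hinfty^{m \times m}$. Fix $\omega \in [0,\infty]$. By the realization result from \cite[Lem.~1.14]{vinnicombe2001uncertainty}, every $X_1, X_2 \in \mU_m$ arises as $U(j\omega), V(j\omega)$ for some unitary $U, V \in \RR\Hinfty^{m \times m}$ (for $\omega = 0$ or $\infty$ one uses constant unitaries, which are trivially in $\RR\Hinfty$). Feedback stability forces $\det(I + X_1 G(j\omega) X_2 K(j\omega)) \neq 0$ for all $X_1, X_2 \in \mU_m$, i.e. the matrices $A := G(j\omega)$ and $B := K(j\omega)$ satisfy condition (i) of Theorem~\ref{thm: UAVB}. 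Hence by Theorem~\ref{thm: UAVB} there exist $\gamma_\omega \in \mR$ and $\xi_\omega \in \{-1,1\}$ with the diagonal multiplier $P_\omega = \diag(-\xi_\omega \gamma_\omega^2 I, \xi_\omega I)$ fulfilling the strict inequalities \eqref{eq:multipliers} frequency-wise; by Theorem~\ref{thm: UAVB}(iii) this is equivalent to either $\sigma_1(G(j\omega))\sigma_1(K(j\omega)) < 1$ (then $\xi_\omega = 1$) or $\sigma_m(G(j\omega))\sigma_m(K(j\omega)) > 1$ (then $\xi_\omega = -1$). The remaining work is to assemble these pointwise data into a genuine $\gamma \in \Linfty^{1\times1}$ and a single $\xi \in \{-1,1\}$: I would argue that the sign $\xi_\omega$ is locally constant (the two strict singular-value conditions are open and, since $G, G^{-1}, K, K^{-1} \in \RR\Hinfty$, cannot both fail, so the set of frequencies in each regime is open), and on a connected component $j\mR \cup \{\infty\}$ this forces a single $\xi$; then one chooses $\gamma(j\omega)$ continuously, e.g. via $\gamma(j\omega)^2 = 1/(\sigma_1(K(j\omega))^2 + \epsilon)$ in the small-gain case (as in the proof of Lemma~\ref{lem:multiplier_and_singular_values}(i)) or $\gamma(j\omega)^2 = 1/(\sigma_m(K(j\omega))^2 - \epsilon)$ in the large-gain case, with a uniform $\epsilon > 0$ obtainable from $\inf_\omega$ of the relevant strictly positive continuous function (finite infimum attained because the singular values extend continuously to $\omega = \infty$). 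This yields $|\gamma| \in \Linfty^{1\times1}$ with $|\gamma(j\omega)| > 0$ and \eqref{eq: gain_sep} holding with the stated $\Pi$.

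The main obstacle I anticipate is the last assembly step in the necessity direction: upgrading the frequency-wise matrix result to a uniform, measurable (indeed continuous) multiplier with one global sign $\xi$. The sign-constancy argument needs the observation that the two regimes of Theorem~\ref{thm: UAVB}(iii) are mutually exclusive and jointly exhaustive under the standing invertibility hypotheses, together with continuity of singular values and their behavior at $\omega = \infty$, to rule out the sign jumping; and the uniform $\epsilon$ needs compactness of the extended imaginary axis. Everything else is a routine lift of the matrix case via Proposition~\ref{prop:IQC_suff} and the Vinnicombe realization lemma, closely paralleling the proofs of the preceding theorems in this subsection.
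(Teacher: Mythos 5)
Your proof follows essentially the same route as the paper's: sufficiency via Proposition~\ref{prop:IQC_suff} and Remark~\ref{rem:IQC_suff} together with the unitary invariance of the block-diagonal quadratic forms, and necessity by applying Theorem~\ref{thm: UAVB} frequency-wise through the Vinnicombe realization lemma, with continuity and connectedness of the extended imaginary axis pinning down a single $\xi$. The sign-constancy and uniform-$\epsilon$ assembly you spell out is exactly what the paper compresses into the remark that continuity of $G$ and $K$ guarantees the uniqueness of $\xi$ for all $\omega \in [0,\infty]$.
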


 \begin{proof}
Necessity can be established by applying Theorem~\ref{thm: UAVB} frequency-wise as in the proof for Proposition~\ref{prop:IQC_nec}. In particular,
continuity of $G$ and $K$ on the imaginary axis guarantees the uniqueness of $\xi$ for all $\omega \in [0, \infty]$. Sufficiency
follows from Proposition~\ref{prop:IQC_suff} and the fact that \eqref{eq: gain_sep} implies
\begin{align*}
  \begin{split}
\begin{pmatrix}
I & -G(j\omega)^*U(j\omega)^*
\end{pmatrix} \Pi(j\omega)
\begin{pmatrix}
I \\ -U(j\omega)G(j\omega)
\end{pmatrix} & \prec 0  \\
\begin{pmatrix}
K(j\omega)^*V(j\omega)^* & I
\end{pmatrix} \Pi(j\omega)
\begin{pmatrix}
V(j\omega)K(j\omega) \\ I
\end{pmatrix} & \succeq 0
\end{split}
\end{align*}
for all unitary $U \in \RR\Hinfty^{m \times m}$ and $V \in \RR\Hinfty^{m \times m}$.
\end{proof}

\begin{example}
Consider $G \in \RR\Hinfty^{m \times n}$ and $K \in \RR\Hinfty^{n \times m}$ for which $\|G\|_\infty < \gamma$ and $\|K\|_\infty \leq
\frac{1}{\gamma}$. Then $G$ and $K$ satisfy the quadratic separation condition in the theorems above, i.e., \eqref{eq: gain_sep}, with
  \[
    \Pi:= \begin{bmatrix}
      -\gamma^2 I & 0 \\
      0 & I
\end{bmatrix}.
\]
This is the celebrated small-gain theorem.
\end{example}

\section{A numerical example}\label{sec:num_ex}
In this section, we illustrate how the results in the paper can be used in practice. In particular, we here consider an example of verifying robust stability against positive scaling uncertainty, i.e., we focus on using Theorem~\ref{thm:LTI_magnitudinal_uncertainty}. It is noteworthy that the other results in this paper can be used analogously in appropriate settings.

To this end, consider the two transfer functions $G, K \in \RR\Hinfty^{2 \times 2}$ given in \eqref{eq:G_and_K}. The goal is to verify that the negative feedback interconnection between the two systems is stable against any positive scaling uncertainty, i.e., that $(I + \tau GK)^{-1} \in \RR\Hinfty^{2 \times 2}$ for all $\tau > 0$.
It can be easily seen that neither of the systems are passive, e.g., by noting that $G(j1) + G(j1)^* \not \succeq 0$ and that $K(j0) + K(j0)^* \not \succeq 0$,
and hence results based on passivity cannot be used to guarantee robust stability of the interconnection. However, from Theorem~\ref{thm:LTI_magnitudinal_uncertainty} we know that finding a phase type multiplier is a both necessary and sufficient condition for the sought robust stability.
To numerically obtain a stability certificate, we discretize the frequency interval into 1000 points $\{ \omega_\ell \}_{\ell = 1}^{1000}$; frequency $\omega_1 =  0$, and 999 grid points logarithmically equally spaced between $\omega_2 = 10^{-40}$ and $\omega_{1000} = {10^{40}}$ (note that $\omega_{501} = 1$). Then, for each $\omega_\ell$ we search for an $H_\ell \in \mM_2$ so that with
\[
  \Pi_\ell := \begin{bmatrix}
    0 & H_\ell \\
    H_\ell^* & 0
    \end{bmatrix},
\]
the two LMIs
\begin{align*}
\begin{pmatrix}
I & -G(j\omega_\ell)^*
\end{pmatrix} \Pi_\ell
\begin{pmatrix}
I \\ -G(j\omega_\ell)
\end{pmatrix} & \preceq -10^{-10} G(j\omega_\ell)^*G(j\omega_\ell)  \\
\begin{pmatrix}
K(j\omega_\ell)^* & I
\end{pmatrix} \Pi_\ell
\begin{pmatrix}
K(j\omega_\ell) \\ I
\end{pmatrix} & \succeq 0,
\end{align*}
are satisfied.
Here, we set $\epsilon = 10^{-10}$, since a small value of $\epsilon$ makes the LMIs easier to satisfy. If there exists one solution $H_\ell$ to the two LMIs, there in general exist multiple solutions. In order to obtain similar matrices for the different frequencies, for each $\ell$ we minimize $\| I_2 - H_\ell \|_2$ subject to the two LMIs as constraint. For each $\ell$, the resulting semidefinite programming problem is a convex optimization problem, and the numerical implementation is performed in Matlab using CVX \cite{cvx, grant2008graph} on a standard desktop computer with a 64-bit operating system (Windows 10), a 2.90GHz Intel i7-10700 CPU, and 32GB of RAM. The optimization problem is feasible for all $\ell = 1, \ldots, 1000$, which by continuity is a numerical certificate that the feedback interconnection of $G$ and $K$ is stable against any positive scaling uncertainty. Moreover, the total time to solve all the semi-definite programming problems was less than 5 minutes. Finally, for illustration purposes, a few of the obtained $H_\ell$s are given below.
\begin{align*}
\small
& H_1 \approx 10^{-7}\begin{bmatrix}
   7.82 &  -7.72 \\
   8.26 &  -8.12
\end{bmatrix},\\
& H_{501} \approx \begin{bmatrix}
   6.52 + 37.7j  & 4.60 + 52.9j \\
   1.53 + 53.1j  & 8.80 + 77.1j
\end{bmatrix},\\
& H_{1000} \approx I_2.
\end{align*}

\begin{figure*}[th]
%% Change text size
%\normalsize
\small
%\footnotesize
%\scriptsize
%\tiny
%
%% Store the current equation number.
%\newcounter{MYtempeqncnt}
%\setcounter{MYtempeqncnt}{\value{equation}}
%% Set the equation number to one less than the one
%% desired for the first equation here.
%% The value here will have to changed if equations
%% are added or removed prior to the place these
%% equations are referenced in the main text.
%\setcounter{equation}{5}
%% For placing on bottom of page (need to use stfloats-package as well)
%\vspace*{4pt}
\hrulefill
\begin{equation}\label{eq:G_and_K}
G(s) = \frac{\begin{bmatrix} 7 s^2 + 57 s + 90 & 10 s^2 + 82 s + 132 \\ 10 s^2 + 73 s + 78 & 14 s^2 + 104 s + 120 \end{bmatrix}}{(s + 1)(s + 6)^2},
\quad
K(s) = \frac{-\begin{bmatrix} -7 s^3 - 91 s^2 - 413 s - 609 & 2 s^2 + 20 s + 58 \\ 5.5 s^3 + 71.5 s^2 + 324.5 s + 478.5 & 0.5 s^3 + 6 s^2 + 24.5 s + 29 \end{bmatrix}}{(s + 5 - 2j)^2 (s + 5 + 2j)^2}
\end{equation}
%% Restore the current equation number.
%\setcounter{equation}{\value{MYtempeqncnt}}
%For placing on top of page
\hrulefill
\vspace*{4pt}
\end{figure*}

\section{Conclusions}\label{sec:conclusions}

We have shown that robustness of feedback interconnections against certain structured uncertainty corresponds to specific forms of quadratic
separation of the open-loop systems. Specifically, gain-type multipliers define quadratic separation needed in a feedback that is robust against all
phase-type uncertainty. Analogously, a robustly stable feedback against all gain-type uncertainty can always be established via the existence of phase-type multipliers. These results are importantly informative when using multiplier-based methods for establishing robust feedback stability. Future research directions of interest include the consideration of block-diagonal structured uncertainty as in the $\mu$-analysis and the investigation of its intricate relation with the main results in this paper. The exploration of a possibly
unifying description of the structures of uncertainties and the corresponding multipliers beyond those examined in this paper is also desirable.

\section*{Acknowledgement}
The authors would like to thank Chao Chen, Dan Wang, Di Zhao, and Ding Zhang for valuable discussions, and the anonymous reviewers for valuable feedback that helped improve the paper.

%%%%%%%%%%%%%%%%%%%%%%%%%%%%%%%%%%%%%%%%%%%%%%%%%%%%%%%%%%%%%%%%%%%%%%%%%%%%%%%%
\appendix

\subsection{Proof of Theorem~\ref{thm:H_sectorial}, Corollary~\ref{cor:H_sectorial_A_inv}, and Corollary~\ref{cor: H_sectorial}}
\label{app:proof_thm:H_sectorial}

\begin{proof}[Proof of Theorem~\ref{thm:H_sectorial}]
The equivalence between (i) and (iii) is clear: the determinant is nonzero for all nonnegative scaling if and only if $AB$ have no eigenvalue along the strictly negative real axis.

Next, we prove that ``(ii) $\Rightarrow$ (i)''. To this end, assume that (ii) holds. By Lemma~\ref{lem: matrix_stability},
the existence of the multiplier $P$ that fulfills \eqref{eq:multipliers4} means that $\det(I + AB) \neq 0$. Now, for any $\tau \geq 0$ consider the matrices $\tilde{A} = A$ and $\tilde{B} = \tau B$. For these matrices, it is easily verified that this $P$ also fulfills \eqref{eq:multipliers4}. Therefore, by Lemma~\ref{lem: matrix_stability}
we have that $0 \neq \det(I + \tilde{A}\tilde{B}) = \det(I + \tau AB)$.

We complete the proof by showing that ``(iii) $\Rightarrow$ (ii)''. To this end, assume that \eqref{eq:condition}
holds. This means that the principle part of the matrix square root $(AB)^{1/2}$ is well-defined and that all the eigenvalues of $(AB)^{1/2}$ lie in the open right half-plane or at the origin \cite[Prob.~1.27]{higham2008functions}. Moreover, $(AB)^{1/2}$ has as many zero-eigenvalues as $AB$, and since a potential zero-eigenvalue of $AB$ is assumed to be semi-simple, so will the potential zero-eigenvalue of $(AB)^{1/2}$.
Next, let $\oplus$ denote the direct sum of two matrices, i.e., the block-diagonal with the two matrices on the diagonal \cite[Sec.~0.9.2]{horn2013matrix}, and
let $(AB)^{1/2} = X J X^{-1}$ be a Jordan normal form. More specifically, let $J = J_1 \oplus \cdots \oplus J_{\ell_1} \oplus \mathbf{0}$ where each block $J_k$ is of size $n_k$ and have the nonzero eigenvalue $\lambda_k$ on the diagonal, as per usual, but let the Jordan normal form be such that the elements on the sup-diagonal of each $J_k$ take the value 
\[
\epsilon = \min_{k \in \{ 1, \ldots, \ell_1 \}} \real\Big(\lambda_k\big((AB)^{1/2}\big)\Big)/2 > 0,
\]
which is always possible \cite[Cor.~3.1.21]{horn2013matrix}.

Now, set $D = J_1 \oplus \cdots \oplus J_{\ell_1} \oplus I$ and note that $D$ is of full rank, that $D^{-1} = J_1^{-1} \oplus \cdots \oplus J_{\ell_1}^{-1} \oplus I$, and that $D^{-1}J = I \oplus \cdots \oplus I \oplus \mathbf{0}$. Moreover, $D$ is strictly accretive. To see the latter, first note that by \cite[1.2.10, p.~12]{horn1994topics} we have that $W(D) = W(J_1 \oplus \cdots \oplus J_{\ell} \oplus I) = \text{Co}(W(J_1) \cup \cdots \cup W(J_{\ell}) \cup \{1 \})$, where $\text{Co}$ denotes the convex hull.
Moreover, $J_k = \lambda_k\big((AB)^{1/2}) I_{n_k \times n_k} + \epsilon S$, where $S$ is the nilepotent matrix with zeros everywhere except the first sup-diagonal which is ones. By \cite[1.2.10, p.~12]{horn1994topics} $W(J_k) \subset \lambda_k + \epsilon W(S)$, and by \cite[Prob.~29, pp.~45-46]{horn1994topics} the set $W(S)$ is contained in the unit disc. Since by construction $\epsilon \leq  \real(\lambda_k)/2$, we therefore have that $W(J_k) \subset \mC_+$ for all $k$, and hence $W(D) \subset \mC_+$, i.e., $D$ is strictly accretive. In particular, this means that $D + D^* \succ 0$.

Finally, take $H = A^*X^{-*} D^{-*} X^{-1}$ and note that
\[
HA + A^*H^* = A^{*}X^{-*}(D^{-*} + D^{-1})X^{-1}A \succeq \varepsilon A^*A
\]
for some $\varepsilon > 0$ small enough, since $X^{-*}(D^{-*} + D^{-1})X^{-1} \succ 0$. By multiplying the above inequality with $-1$, \eqref{eq:multipliers4_A} follows. Moreover, 
\begin{align*}
H^*B & = X^{-*} D^{-1} X^{-1} AB = X^{-*} D^{-1} X^{-1} X J X^{-1} X J X^{-1} \\
&= X^{-*} \big(I \oplus \cdots \oplus I \oplus \mathbf{0} \big) J X^{-1} = X^{-*} J X^{-1},
\end{align*}
which is congruent to $J$ and hence quasi-strictly accretive.
A direct calculation in \eqref{eq:multipliers4_B} therefore verifies that last claim, and hence proves that ``(iii) $\Rightarrow$ (ii)''.
\end{proof}

\begin{proof}[Proof of Corollary~\ref{cor:H_sectorial_A_inv}]
To prove the corollary, assume that $A \in \gln$. A direct calculation shows that if (iv) is fulfilled, then so is (ii).  Moreover, observe that the fact that $A$
is of full rank implies that $H = A^*X^{-*}D^{-*}X^{-1}$ constructed in the proof of Theorem~\ref{thm:H_sectorial}, above, is of full rank. Furthermore, it also means that
$HA + A^*H^* = A^{*}X^{-*}(D^{-*} + D^{-1})X^{-1}A$ is congruent to $D^{-*} + D^{-1}$ and therefore positive definite, i.e., $HA$ is strictly
accretive. In particular, that means that the corresponding multiplier $P$ fulfills \eqref{eq:multipliers}, and also that (iii) implies (iv) in this
case.

Finally, clearly (iv) implies (v), since a quasi-strictly accretive matrix is accretive. What is left to show is thus that under the assumption that $A \in \gln$, (v) implies any of the statements (i)-(iv). 
To this end, note that if $H \in \gln$ and $HA$ is strictly accreitve, then by congruence $HA$ and $AH^{-*}$ have the same phases. 
Therefore, using \cite[Lem.~2.4]{chen2024phase} we have that
\begin{align*}
-\pi & < \phimin(HA) + \phimin(H^*B) = \phimin(AH^{-*}) + \phimin(H^*B) \\
& \leq \angle \lambda_i(AH^{-*}H^{*}B) = \angle \lambda_i(AB) = \angle \lambda_i(AH^{-*}H^{*}B) \\
& \leq \phimax(HA) + \phimax(H^*B) < \pi
\end{align*}
for $i = 1, \ldots n$, which shows that (v) implies (iii).
\end{proof}

\begin{proof}[Proof of Corollary~\ref{cor: H_sectorial}]
Reexamining the proof of Theorem~\ref{thm:H_sectorial}, the proof of ``(i) $\Leftrightarrow$ (iii)'', and the proof of ``(ii) $\Rightarrow$ (i)'' hold directly also in the case of real matrices $A,B$ and $H$. Moreover, the remaining parts of the proof, showing that that ``(iii) $\Rightarrow$ (ii)'',  would also hold if the constructed $H$ is real. The latter is true if $X$ and $D$ are real, which is true if $(AB)^{1/2}$ is real. Thus, the conclusion follows if $AB$ has a real primary square root. Since a potential zero-eiganvalue is assumed to be semi-simple, by \cite[Thm.~1.23]{higham2008functions} the matrix $AB$ has a real primary square root.
\end{proof}

\subsection{Proof of Theorem~\ref{thm:TATstarSBSstar}}
\label{app:proof_thm:TATstarSBSstar}

The proof proceeds by showing that (i) and (iii) are equivalent, and that (ii) is equivalent to (iii). The former equivalence is the lengthier part, and for improved readability we hence separate the equivalence of (i) and (iii) into a separate proposition.

\begin{proposition}\label{lem:TATstarSBSstar}
Let $n \geq 2$, and let $A,B \in \mM_n \setminus \{ 0 \}$. Then
\begin{subequations}\label{eq:TATstarSBSstar_proof}
\begin{equation}
\det(I + T^*AT S^*BS) \neq 0 \text{ for all } T,S \in \gln,
\label{eq:TATstarSBSstar_proof_1}
\end{equation}
if and only if
\begin{align}
& \text{one matrix is quasi-sectorial, the other semi-sectorial, and} \nonumber\\
& \phimax(A) + \phimax(B) < \pi, \; \phimin(A) + \phimin(B) > -\pi.
\label{eq:TATstarSBSstar_proof_2}
\end{align}
\end{subequations}
\end{proposition}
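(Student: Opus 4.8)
The plan is to prove the two implications of \eqref{eq:TATstarSBSstar_proof} separately, relying on two observations: (a) a congruence $X \mapsto T^*XT$ with $T \in \gln$ preserves sectoriality, quasi-sectoriality and semi-sectoriality, and leaves both the angular numerical range $W'(\cdot)$ and the phases unchanged; (b) by the cyclic property of the determinant, $\det(I + T^*AT\,S^*BS) = \det\!\big(I + A\,(ST^*)^*B\,(ST^*)\big)$, and $ST^*$ ranges over all of $\gln$ as $T,S$ do. Hence \eqref{eq:TATstarSBSstar_proof_1} is equivalent to the assertion that $-1 \notin \lambda(AB')$ for every $B'$ congruent to $B$ (and, by symmetry, to $-1 \notin \lambda(A'B)$ for every $A'$ congruent to $A$), so the task is to decide when the spectra of the products $A'B'$ over these congruence orbits avoid $-1$.

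The direction \eqref{eq:TATstarSBSstar_proof_2} $\Rightarrow$ \eqref{eq:TATstarSBSstar_proof_1} is quick. Say $A$ is quasi-sectorial and $B$ semi-sectorial with the stated phase inequalities. Then every $T^*AT$ is quasi-sectorial and every $S^*BS$ semi-sectorial, with unchanged phases, so by the semi-sectorial extension of the eigenvalue-argument bound \cite[Lem.~3]{chen2021phase} every nonzero eigenvalue of $T^*AT\,S^*BS$ has argument in $[\phimin(A)+\phimin(B),\,\phimax(A)+\phimax(B)]$. Quasi-sectoriality of $A$ gives $\phimax(A)-\phimin(A)<\pi$ and semi-sectoriality of $B$ gives $\phimax(B)-\phimin(B)\le\pi$, so this interval has width below $2\pi$ and by the phase inequalities lies in $(-\pi,\pi)$; thus no nonzero eigenvalue is a negative real, in particular $-1\notin\lambda(T^*AT\,S^*BS)$, while a zero eigenvalue is trivially $\ne-1$.

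For the converse \eqref{eq:TATstarSBSstar_proof_1} $\Rightarrow$ \eqref{eq:TATstarSBSstar_proof_2} I would go through a planar condition $(\star)$: there is no pair $a\in W'(A)\setminus\{0\}$, $b\in W'(B)\setminus\{0\}$ with $ab\in\mR_{--}$. That \eqref{eq:TATstarSBSstar_proof_1} implies $(\star)$ is a short quadratic-form computation: if $-1\in\lambda(A'B')$ with $A'$ congruent to $A$ and $B'$ congruent to $B$, an associated nontrivial intersection vector $(x_1,x_2)$ of the graphs (so $x_2=-A'x_1$, $x_1=B'x_2$) gives $a:=x_1^*A'x_1=-x_1^*x_2\in W'(A)$ and $b:=x_2^*B'x_2=\overline{x_1^*x_2}\in W'(B)$ with $ab=-|x_1^*x_2|^2$; choosing which of the two reductions in (b) to use, one arranges that the fixed matrix is the one with the narrower angular numerical range so that $x_1^*x_2\ne0$, hence $ab\in\mR_{--}$. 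The hard part is the reverse implication, that failure of $(\star)$ lets one construct a congruence realizing $-1$ as an eigenvalue of a product: given $a,b$ with $ab\in\mR_{--}$, I would take unit vectors $u,v$ with $u^*Au=a$, $v^*Bv=b$, restrict to the (at most two-dimensional) subspaces they generate, and exhibit the destabilizing eigenvector there up to a positive scaling. This is in essence the tightness part of a small-phase theorem, cf.\ \cite[Lem.~4]{chen2021phase}, and it---together with the case bookkeeping for rank-deficient and non-sectorial matrices---is where I expect the real difficulty.

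It then remains to match $(\star)$ with \eqref{eq:TATstarSBSstar_proof_2}, which is planar geometry of cones at the origin: $ab\in\mR_{--}$ for some $a\in W'(A)$, $b\in W'(B)$ means the angular extents $[\phimin(A),\phimax(A)]$ and $[\phimin(B),\phimax(B)]$ of $W'(A)$, $W'(B)$ sum to an interval containing $\pi$ modulo $2\pi$. If some matrix is not semi-sectorial its angular numerical range is all of $\mC$ and this always occurs; if both are semi-sectorial but neither quasi-sectorial, both angular extents have width $\pi$, their sum sweeps the whole circle, and again $\pi$ is hit. Hence $(\star)$ forces one matrix quasi-sectorial and the other semi-sectorial, the sum of angular extents then has width below $2\pi$, and with the phase representatives normalized as in Section~\ref{sec:background_phase} it misses $\pi$ exactly when $\phimax(A)+\phimax(B)<\pi$ and $\phimin(A)+\phimin(B)>-\pi$. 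Combining the two reductions gives \eqref{eq:TATstarSBSstar_proof_1} $\Leftrightarrow$ \eqref{eq:TATstarSBSstar_proof_2}. Aside from that construction, the steps needing care are confirming congruence-invariance of $W'$ and of the phases (via the sectorial factorization and Remark~\ref{rem:quasi-sectorial}), making the eigenvalue-argument bound applicable when $AB'$ is singular, and fixing the phase representatives so the interval arithmetic is unambiguous.
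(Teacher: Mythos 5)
Your forward direction ($\eqref{eq:TATstarSBSstar_proof_2}\Rightarrow\eqref{eq:TATstarSBSstar_proof_1}$) is essentially the paper's argument and is fine. The converse, however, contains a genuine gap: the intermediate condition $(\star)$, phrased in terms of the angular numerical ranges $W'(A)$, $W'(B)$ themselves, is \emph{not} equivalent to either \eqref{eq:TATstarSBSstar_proof_1} or \eqref{eq:TATstarSBSstar_proof_2}. Take $A=B=\begin{bsmallmatrix}1&2\\0&1\end{bsmallmatrix}$. Here $W(A)$ is the closed disc of radius $1$ centred at $1$, so $W'(A)\setminus\{0\}=W'(B)\setminus\{0\}$ is the \emph{open} right half-plane; any $a\in W'(A)\setminus\{0\}$ and $b\in W'(B)\setminus\{0\}$ satisfy $\arg a+\arg b\in(-\pi,\pi)$, so $ab\notin\mR_{--}$ and $(\star)$ holds. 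Yet \eqref{eq:TATstarSBSstar_proof_2} fails (neither matrix is quasi-sectorial, and $\phimax(A)+\phimax(B)=\pi$), and \eqref{eq:TATstarSBSstar_proof_1} fails as well: with $T=\begin{bsmallmatrix}0&-1\\1&0\end{bsmallmatrix}$ and $S=I$ one computes $T^*AT\,B=\begin{bsmallmatrix}1&2\\-2&-3\end{bsmallmatrix}$, which has $-1$ as a double eigenvalue. So both links of your chain $\eqref{eq:TATstarSBSstar_proof_1}\Leftrightarrow(\star)\Leftrightarrow\eqref{eq:TATstarSBSstar_proof_2}$ break on this example.

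The failure sits exactly where you anticipated difficulty. In your quadratic-form computation, the witness vectors of a degenerate intersection can satisfy $x_1^*x_2=0$ (in the example above they must, since $(\star)$ holds), and no choice of which factor to congruence-transform avoids this; so a violation of \eqref{eq:TATstarSBSstar_proof_1} need not produce $a,b$ with $ab\in\mR_{--}$. Symmetrically, in your planar matching, a semi-sectorial matrix that is not quasi-sectorial may attain its extreme phases only in the \emph{closure} of $W'$ --- this is precisely what happens for the Jordan-type blocks $e^{j\theta}\begin{bsmallmatrix}1&2\\0&1\end{bsmallmatrix}$ in the congruence canonical form \eqref{eq:TATSBS_form_of_A} --- so ``the sum of the angular extents contains $\pi$'' does not yield actual elements $a\in W'(A)$, $b\in W'(B)$ with $ab\in\mR_{--}$, and your proposed construction ``take unit vectors $u,v$ with $u^*Au=a$, $v^*Bv=b$'' has nothing to start from. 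Distinguishing attained from non-attained boundary phases is exactly the quasi-sectorial versus semi-sectorial dichotomy in the statement, and it is why the paper's proof resorts to the canonical form and explicit $2\times2$ rotation constructions rather than a numerical-range argument. Replacing $W'(\cdot)$ by its closure would repair the matching with \eqref{eq:TATstarSBSstar_proof_2}, but both implications linking \eqref{eq:TATstarSBSstar_proof_1} to the modified $(\star)$ would then remain unproven in precisely these boundary cases, which is where all the work lies.
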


\begin{proof}
To show $\Leftarrow$, assume that $\eqref{eq:TATstarSBSstar_proof_2}$ holds. 
For any $T,S \in \gln$, by congruence invariance of phases of matrices we have that $\phi(T^*AT) = \phi(A)$ and that $\phi(S^*BS) = \phi(B)$. Therefore, by \cite[Lem.~2.4]{chen2024phase} we have that
\begin{align*}
-\pi < & \phimin(A) + \phimin(B) = \phimin(T^*AT) + \phimin(S^*BS) \\
&  \leq \angle \lambda_i(T^*AT S^*BS) \leq \phimax(T^*AT) + \phimax(S^*BS) \\
& = \phimax(A) + \phimax(B) < \pi
\end{align*}
for $i = 1, \ldots n$.
In particular, this means that there exists an $\epsilon > 0$ so that $\lambda(T^*AT S^*BS) \cap \{ z \in \mC \mid z = -r e^{j\theta}, \; r > 0, \; \theta \in [-\epsilon, \epsilon] \} = \emptyset$ for all $T,S \in \gln$. The latter implies that \eqref{eq:TATstarSBSstar_proof_1} holds.

Next, to show $\Rightarrow$ we will show that the contraposition is true, namely that if \eqref{eq:TATstarSBSstar_proof_2} is not true, then there exists $T,S \in \gln$ such that $\det(I + T^*AT S^*BS) = 0$, i.e., such that $T^*AT S^*BS$ has an eigenvalue in $-1$. The latter is shown by explicitly considering all possible cases using the results in \cite{furtado2001spectral, furtado2003spectral}, and is also making heavy use of \cite[Thm.~1]{johnson1974field}  (see also \cite[Thm.~1.7.9]{horn1994topics}, \cite[Thm.~3]{furtado2001spectral}) .

To this end, first assume that $B$ is arbitrary and with at least one nonzero eigenvalue, and $A$ has only the zero-eigenvalue. Since $A \neq 0$, the
eigenvalue cannot be semisimple, and hence $A$ must have a Jordan block of size at least $2 \times 2$. The latter has a numerical range that is a
circle centered around the origin \cite[Prob.~9, pp.~25]{horn1994topics}, and hence the angular numerical range $2 \times 2$ block, and hence of the matrix $A$, is the entire complex plane.  Now, let
$B = V_B^* \Gamma_B V_B$ be a Schur decomposition of $B$, i.e., where $V_B$ is untiary and $\Gamma_B$ is upper triangular. Any such $\Gamma_B$ is
called a Schur form of $B$. Moreover, note that at least on element of $\Gamma_B$ is nonzero;
without loss of generality assume it is $(\Gamma_B)_{11}$. Next, by \cite[Thm.~1]{johnson1974field} there exists a $C \in \mM_n$ such that one of the eigenvalues of $C$ is $-(\Gamma_B)_{11}$ and such that $\tilde{T}^*A\tilde{T} = C$ for some $\tilde{T} \in \gln$. Moreover, let $C = V_C^* \Gamma_C V_C$ be a Schur decomposition of $C$ such that $(\Gamma_C)_{11} = -(\Gamma_B)_{11}$. By taking $S = V_B^*$ and $T = \tilde{T}V_C^*$ we have that
\begin{align*}
(T^*AT)(S^*BS) & = (V_C\tilde{T}^* A \tilde{T}V_C^*) (V_BV_B^*\Gamma_BV_BV_B^*) \\
& = (V_CCV_C^*) \Gamma_B = \Gamma_C \Gamma_B,
\end{align*}
which is upper triangular and with $-1$ in the upper left corner, i.e., for these $T$ and $S$ we have that $T^*AT S^*BS$ has an eigenvalue in $-1$.

Note that the above procedure can also be carried out, mutatis mutandis, if $B$ only has the zero-eigenvalue. In particular, if $B$ only has the zero-eigenvalue it must also have a Jordan block of size at least $2 \times 2$. Using \cite[Thm.~1]{johnson1974field},  by an appropriate selection of $S=S_1S_2$, we can thus make sure that $S_1^*BS_1$ has a nonzero eigenvalue, after which the above procedure can be repeated to select $T$ and $S_2$ so that $(T^*AT)(S^*BS)$ has an eigenvalue in $-1$. This means that in the following, we can always assume that both $A$ and $B$ have at least one nonzero eigenvalue.
In fact, for any matrix $A$ that is nonzero and which is not sectorial
and any arbitrary nonzero matrix $B$, a similar argument to the preceding one shows that $\det(I + T^*AT S^*BS) = 0$ for some $S,T \in \gln$, since the angular numerical range of $A$ is the entire complex plane (see \cite[Thm.~1]{johnson1974field}).

The above argument shows that a necessary condition for \eqref{eq:TATstarSBSstar_proof_1} to hold is that both $A,B$ are semi-sectorial. By \cite[Thm.~5]{furtado2003spectral} this means that, without loss of generality, we can restrict ourselves to consider matrices of the form
\begin{equation}\label{eq:TATSBS_form_of_A}
A = \! \left[ 
\begin{array}{c@{}cc}
 \!\!\! e^{j\theta_A} I_{k_2^A} \otimes
 \left[\begin{array}{cc}
         1 & 2 \\
         0 & 1 \\
  \end{array}\right] & \mathbf{0} & \mathbf{0} \! \\
  \mathbf{0} & \diag(e^{j\tilde{\phi}_1(A)}, \ldots, e^{j\tilde{\phi}_{k_1^A}(A)}) & \mathbf{0} \! \\
  \mathbf{0} & \mathbf{0} & \mathbf{0}\! \\
\end{array}\right],
\end{equation}
where $\theta_A + \pi/2 \geq \tilde{\phi}_1(A) \geq \ldots \geq \tilde{\phi}_{k_1^A}(A) \geq \theta_A - \pi/2$, $k_1^A \geq 0$, $k_2^A \geq 0$, and $n = 2k_2^A + k_1^A$, and analogously for $B$.
Note also that $\phimax(A) = \theta_A + \pi/2$ if $k_2^A > 0$ and $\phimax(A) = \tilde{\phi}_1(A)$ if $k_2^A = 0$; an analogous observation holds for $\phimin(A)$. Moreover a matrix $A$ of the form \eqref{eq:TATSBS_form_of_A} is quasi-sectorial if and only if $k_2^A = 0$ and $\phimax(A) - \phimin(A) < \pi$. Finally, note that by potentially applying an appropriate permutation that rearranges the block-diagonal elements we can, without loss of generality, restrict our attention to matrices of size $2 \times 2$.

Now, we first show that we cannot have $k_2^A > 0$ and $k_2^B > 0$.
To this end, let $S = I$ and consider the unitary matrix
\begin{align*}
T  & =
\begin{bmatrix}
\cos\left(\frac{\pi}{2} + \frac{\theta_A + \theta_B}{2} \right) & -\sin\left(\frac{\pi}{2} + \frac{\theta_A + \theta_B}{2} \right) \\
\sin\left(\frac{\pi}{2} + \frac{\theta_A + \theta_B}{2} \right) & \cos\left(\frac{\pi}{2} + \frac{\theta_A + \theta_B}{2} \right)
\end{bmatrix} \\
& = 
\begin{bmatrix}
-\sin\left(\frac{\theta_A + \theta_B}{2} \right) & -\cos\left(\frac{\theta_A + \theta_B}{2} \right) \\
\cos\left(\frac{\theta_A + \theta_B}{2} \right) & -\sin\left(\frac{\theta_A + \theta_B}{2} \right)
\end{bmatrix}.
\end{align*}
Let $S_{AB} := \sin((\theta_A + \theta_B)/2)$ and $C_{AB} := \cos((\theta_A + \theta_B)/2)$. A direct (albeit somewhat cumbersome) calculation gives that 
\begin{align*}
& T^*\begin{bmatrix}
1 & 2 \\
0 & 1
\end{bmatrix}
T\begin{bmatrix}
1 & 2 \\
0 & 1
\end{bmatrix}
= \\
& \begin{bmatrix}
(S_{AB} - C_{AB})^2 & 4S_{AB}^2 - 4S_{AB}C_{AB} + 2C_{AB}^2 \\
-2C_{AB}^2     & S_{AB}^2 + 2S_{AB}C_{AB} - 3C_{AB}^2
\end{bmatrix},
\end{align*}
which has eigenvalues $- \cos(\theta_A + \theta_B) \pm i \sin(\theta_A + \theta_B) = -e^{\mp i (\theta_A + \theta_B)}$. Therefore, taking $T$ as above and $S = I$, the matrix $T^*AT S^*BS$ has an eigenvalue in $-1$.

Next, we therefore assume that $k_2^A > 0$ and $k_2^B = 0$. In this case, first assume that $B$ only has one non-zero
phase, in which case it suffices to consider
\[
A = e^{j\theta_A}
\begin{bmatrix}
1 & 2 \\
0 & 1
\end{bmatrix}
\qquad \text{and} \qquad B = \diag(e^{j\phi_1(B)}, 0).
\]
If $\theta_A + \pi/2 + \phi_1(B) \geq \pi$ or $\theta_A - \pi/2 + \phi_1(B) \leq -\pi$, then we can write
\[
T^*ATB = T^*\tilde{A}T \diag(1, 0)
\]
where $\tilde{A} = e^{j\phi_1(B)}A$. However, since $\theta_A + \pi/2 + \phi_1(B) \geq \pi$ or $\theta_A - \pi/2 + \phi_1(B) \leq -\pi$, $-1$ is in the numerical range of $\tilde{A}$. Therefore, using \cite[Thm.~1]{johnson1974field} we can make a construction similar to before, and select an appropriate $T$ such that $T^*ATB$ has an eigenvalue in $-1$. On the other hand, if  $\theta_A + \pi/2 + \phi_1(B) < \pi$ and $\theta_A - \pi/2 + \phi_1(B) > -\pi$, then \eqref{eq:TATstarSBSstar_proof_2} is fulfilled (and thus \eqref{eq:TATstarSBSstar_proof_1} holds, see the proof of the implication ``$\Leftarrow$'').

The next case we consider is when the diagonal unitary part of $B$ is of size at least $2 \times 2$. To this end, it suffices the consider
\[
A = e^{j\theta_A}
\begin{bmatrix}
1 & 2 \\
0 & 1
\end{bmatrix}
\qquad \text{and} \qquad B = \diag(e^{j\phimax(B)}, e^{j\phimin(B)}).
\]
We split this into two different subcases.  In the first case, assume that $B$ is quasi-sectorial, which means that $\phimax(B) - \phimin(B) < \pi$.
If $\theta_A + \pi/2 + \phimax(B) \geq \pi$ or $\theta_A - \pi/2 + \phimin(B) \leq -\pi$, then we can make constructions analogous to the above one, and if  $\theta_A + \pi/2 + \phimax(B) < \pi$ and $\theta_A - \pi/2 + \phimin(B) > -\pi$, then \eqref{eq:TATstarSBSstar_proof_2} is fulfilled and thus \eqref{eq:TATstarSBSstar_proof_1} holds.
Therefore, we next assume that $\phimax(B) - \phimin(B) = \pi$, in which case $B$ is rotation-Hermitian, i.e., $B = e^{j\phimax(B)} \diag(1, -1)$. Moreover, that means that either $\theta_A + \pi/2 + \phimax(B) \geq \pi$ or $\theta_A - \pi/2 + \phimin(B) = \theta_A - \pi/2 + \phimax(B) - \pi \leq -\pi$. In any case, let $S = I$ and let 
\[
T = \begin{bmatrix}
\cos\left(\frac{\theta_A + \phimax(B)}{2}\right) & j \sin\left(\frac{\theta_A + \phimax(B)}{2}\right) \\
j \sin\left(\frac{\theta_A + \phimax(B)}{2}\right) & \cos\left(\frac{\theta_A + \phimax(B)}{2}\right)
\end{bmatrix}.
\]
This $T$ is unitary, and a direct (albeit somewhat cumbersome) calculation verify that $T^* A T B$ has an eigenvalue in $-1$.
This means that we cannot have  $k_2^A > 0$ and $k_2^B = 0$.

Now, consider the case where both $A$ and $B$ have a rotation-Hermitian $2\times2$ block, i.e., when
\[
A = e^{j\phimax(A)}
\begin{bmatrix}
1 & 0 \\
0 & -1
\end{bmatrix},
\qquad
B = e^{j\phimax(B)}
\begin{bmatrix}
1 & 0 \\
0 & -1
\end{bmatrix}.
\]
Similarly to the last case just above, that means that either $\phimax(A) + \phimax(B) \geq \pi$ or $\phimax(A) - \pi + \phimax(B) -\pi \leq -\pi$.
In any case, let $S = I$ and let
\[
T = 
\begin{bmatrix}
\cos\left(\frac{\pi}{2} - \frac{\phimax(A) + \phimax(B)}{2} \right) & -\sin\left(\frac{\pi}{2} - \frac{\phimax(A) + \phimax(B)}{2} \right) \\
\sin\left(\frac{\pi}{2} - \frac{\phimax(A) + \phimax(B)}{2} \right) & \cos\left(\frac{\pi}{2} - \frac{\phimax(A) + \phimax(B)}{2} \right)
\end{bmatrix}.
\]
A calculation similar to before shows that $T^*ATB$ has an eigenvalue in $-1$.

The two final cases to consider is when either i) $A$ has a rotation-Hermitian $2 \times 2$ block and $B$ is quasi-sectorial, or ii) when both $A$ and $B$ are quasi-sectorial, but when the phase condition is not satisfied in either case. The two cases can be handled together, and we can, without loss of generality, assume that $\phimax(A) + \phimax(B) \geq \pi$. In this case, by \cite[Thm.~1]{johnson1974field} there is a $T$ such that $C = T^*AT$ has an eigenvalue in $e^{j\phimax(A)}$. Let $C = V_C^* \Gamma_C V_C$ be a Schur decomposition, with $e^{j\phimax(A)}$ as top-left element. Let $S = S_1 S_2$, and note that since $\phimax(B) \geq \pi - \phimax(A)$ and $B$ is quasi-sectorial we can in a similar way select $S_1$ so that $D = S_1^* B S_1$ has an eigenvalue in $e^{j(\pi - \phimax(A))}$. Let $D = V_D^* \Gamma_D V_D$ be a Schur decomposition with $e^{j(\pi - \phimax(A))}$ as top-left element. By taking $S_2 = V_D^*V_C$, we get
\begin{align*}
& (T^*AT) (SBS^*) = C (S_2^* D S_2) \\
& = V_C^* \Gamma_C V_C V_C^* V_D  V_D^* \Gamma_D V_D V_D^* V_C = V_C^* \Gamma_C \Gamma_D V_C,
\end{align*}
which by construction has an eigenvalue in $-1$.

In summary, this means that unless \eqref{eq:TATstarSBSstar_proof_2} holds, then there exist $T,S \in \gln$ such that $\det(I + T^*AT S^*BS) = 0$. This shows the implication $\Rightarrow$, and hence the result follows.
\end{proof}

\begin{proof}[Proof of Theorem~\ref{thm:TATstarSBSstar}]
For $n = 1$ the matrices are scalar and hence commute. Therefore, in this case $\det(I + T^*AT S^*BS) = \det(1 + \tau a b)$ for $\tau > 0$, and the conclusions follow almost trivially. 

For $n\geq 2$, Proposition~\ref{lem:TATstarSBSstar} shows that (i) and (iii) are equivalent. Next, we prove that 
``(iii) $\Rightarrow$ (ii)''. To this end, without loss of generality, assume that $A$ is quasi-sectorial of rank $n-k$, and that $B$ is semi-sectorial. The fact that the sum of the largest and smallest phases are bounded away from $\pm \pi$, respectively, implies that there exists a $z \in \mT$ such that $zA$ is quasi-strictly accretive and $z^*B$ is accretive, and hence in particular that $-zA -z^*A^* \preceq 0$ and $z^*B + zB^* \succeq 0$. The latter means that for this $P$, \eqref{eq:multipliers4_B} holds. It remains to show that  \eqref{eq:multipliers4_A} holds, i.e., that the former inequality above can be strengthened to $-zA -z^*A^* \preceq -\varepsilon A^*A$ for some $\varepsilon > 0$. To do so, let $zA = T^*DT$ be a sectorial decomposition of the quasi-strictly accretive $zA$. In particular, this means that
\[
D = \diag(e^{j\phi_1}, \ldots, e^{j \phi_{n-k}}, \underbrace{0, \ldots, 0}_{k \text{ of them}}),
\]
and $zA + z^*A^* = T^* (D + D^*) T$, which is positive semi-definite with the top-left block of $D + D^*$ containing the $n-k$ strictly positive eigenvalues. A direct calculation
gives that
\[
A^*A = A^*z^*zA = T^*D^*TT^*DT = T^*
\begin{bmatrix}
\star      & 0 \\
0          & 0
\end{bmatrix} T =: T^* \Delta T,
\]
where the block $\star$ is of dimension $n-k \times n-k$ and is positive definite. In particular, this means that for $\epsilon > 0$ small enough we have that $D + D^* \succ \epsilon \Delta$, and hence that
\[
zA + z^*A^* = T^* (D + D^*) T \succ \epsilon T^* \Delta T = \epsilon A^*A.
\]
Multiplying the above inequality by $-1$ gives the inequality \eqref{eq:multipliers4_A}. This completes the proof of the implication ``(iii) $\Rightarrow$ (ii)''.

To show that ``(ii) $\Rightarrow$ (iii)'', without loss of generality, assume that $P$ fulfills \eqref{eq:multipliers4}. The proof for the case where
$P$ fulfills \eqref{eq:multipliers3} is analogous. Now, note that \eqref{eq:multipliers4} implies that both $zA$ and $z^*B$ are accretive. What
remains to be shown is thus that $zA$ is in fact quasi-strictly accretive. To this end, let $zA = T^*DT$ be the sectorial decomposition, with $D$ of
the form \eqref{eq:TATSBS_form_of_A}. By an argument similar to the one above, we have that \eqref{eq:multipliers4_A} implies that
\[
-\begin{bmatrix}
\ast      & 0 \\
0          & 0
\end{bmatrix} =
-D -D^* \preceq - \varepsilon D^* T T^* D = -\varepsilon
\begin{bmatrix}
\star      & 0 \\
0          & 0
\end{bmatrix},
\]
where $\ast$ is block-diagonal and positive semi-definite, and $\star$ is positive definite. However, the inequality means that $\ast \succeq \varepsilon \star$ for some $\varepsilon > 0$, and since $\star$ is positive definite this can only be true if $\ast$ is also positive definite. Now, if $D$ has a block
\[
\begin{bmatrix}
1 & 2\\
0 & 1
\end{bmatrix}
\]
a direct calculation gives that $\ast$ contains a block 
\[
\begin{bmatrix}
2 & 2\\
2 & 2
\end{bmatrix}.
\]
This would mean that $\ast$ is not of full rank, and hence
it is only positive semi-definite. Therefore, $D$ cannot contain any such blocks, which implies that $zA$ is quasi-strictly accretive.

Finally, the last part of the theorem follows by simply reexamining the proofs for the equivalence of (ii) and (iii) under the additional
assumption that $A$ is of full rank. It is then easily seen that the same conclusion holds, but with \eqref{eq:multipliers4} replaced by \eqref{eq:multipliers}.
\end{proof}

\subsection{Proof of Theorem~\ref{thm: UAVB}}
\label{app:proof_thm: UAVB}

\begin{proof}[Proof of Theorem~\ref{thm: UAVB}]
The equivalence between (ii) and (iii) follows directly from Lemma~\ref{lem:multiplier_and_singular_values}.

To show that ``(ii) $\Rightarrow$ (i)'', first assume that there exists a multiplier $P$ of the form \eqref{eq:P_gain}, with $\xi = 1$, that satisfies \eqref{eq:multipliers}.
A direct calculation, as in the proof of Lemma~\ref{lem:multiplier_and_singular_values}, gives that \eqref{eq:UAVB_proof} holds.
Now, let $U \in \mU_m$ and $V \in \mU_n$, and note that for $\tilde{A} = UA$ and $\tilde{B} = VB$ we have
\begin{align*}
& \tilde{A}^*\tilde{A} = A^*U^*UA = A^*A \prec \gamma^2 I, \\
& \gamma^2 \tilde{B}^*\tilde{B} = \gamma^2 B^*V^*VB = \gamma^2 B^*B \prec I.
\end{align*}
Thus, for this $P$, \eqref{eq:multipliers} holds for $\tilde{A}$ and $\tilde{B}$ and hence $0 \neq \det(I + \tilde{A}\tilde{B}) = \det(I + UAVB)$ by Lemma~\ref{lem: matrix_stability}. Since $U \in \mU_m$ and $V \in \mU_n$ were arbitrary, the implication follows in the case of $\xi = 1$. The proof for the case $\xi = -1$ follows analogously.  

We now show that ``(i) $\Rightarrow$ (iii)''. To this end, assume that $\det(I + UAVB) \neq 0$ for all $U \in \mU_m$ and all $V \in \mU_n$.
First note that the statement is trivial if any of the two matrices $A$ and $B$ is the zero matrix and hence we can, without loss of generality, assume that neither of them is. Now, let $A = W_A \Sigma_A V_A^*$ and $B = W_B \Sigma_B V_B^*$ be the singular value decompositions of $A$ and $B$, respectively, where
$W_A, V_B \in \mU_m$,
$V_A, W_B \in \mU_n$,
$\Sigma_A \in \mM_{m,n}$, and
$\Sigma_B \in \mM_{n, m}$. 
Next, note that for any $k \geq 1$, $\mU_k$ is closed under matrix multiplication, i.e., that for all $U,V \in \mU_n$, $UV \in \mU_n$,
and that all permutation matrices are unitary. Therefore, for any $\tilde{V}, \tilde{W} \in \mU_m$ and any permutation matrix $\mathcal{P} \in \mU_n$, let $U = V_B \tilde{V}^* \tilde{W} W_A^* \in \mU_m$ and $V = V_A \mathcal{P} W_B^* \in \mU_n$. This means that
\begin{align*}
0 & \neq \det(I + UAVB) =  \det(I + V_B \tilde{V}^*\tilde{W}\Sigma_A \mathcal{P} \Sigma_B V_B^*) \\
&  = \det(I + \tilde{W}\Sigma_A \mathcal{P} \Sigma_B \tilde{V}^*)
\end{align*}
for all $\tilde{V}, \tilde{W} \in \mU_n$ and all permutation matrices $\mathcal{P}$.

Next, assume that $n \geq m$.
In this case, note that $\Sigma_A \mathcal{P} \in \mM_{m,n}$ with the $n$ columns of $\Sigma_A$ permuted according to the permutation matrix $\mathcal{P}$. Therefore, $\tilde{W}\Sigma_A \mathcal{P} \Sigma_B \tilde{V}^*$ can be identified as a singular value decomposition of the matrix whose singular values are given by $\sigma_{\Phi(i)}(A)\sigma_{i}(B)$, $i = 1,\ldots,m$, where $\Phi : \{1, \ldots, m\} \mapsto \{1, \ldots, n\}$ is the injective map corresponding to the permutation matrix $\mathcal{P}$. Moreover, by appropriately selecting  the permutation matrix $\mathcal{P}$ we can get any injective map that maps from $\{1, \ldots, m\}$ to $\{1, \ldots, n\}$.
Now, if there exists a $\Phi$ such that $\max_{k} \sigma_{\Phi(k)}(A)\sigma_k(B) \geq 1$ and $\min_{k} \sigma_{\Phi(k)}(A)\sigma_k(B) \leq 1$, then by \cite{horn1954eigenvalues} (see also \cite[Thm.~9.E.5]{marshall2011inequalities}) there exist matrices $\tilde{W}, \tilde{V} \in \mU_n$ such that the corresponding matrix $\tilde{W}\Sigma_A P \Sigma_B \tilde{V}^*$ has an eigenvalue in $-1$. However, that would mean that the corresponding determinant is zero, which is a contradiction.
Therefore, for all $\Phi$ we must either have that $\max_{k} \sigma_{\Phi(k)}(A)\sigma_k(B) < 1$ or that $\min_{k} \sigma_{\Phi(k)}(A) \sigma_k(B) > 1$. In particular, this must hold for all $\Phi$ such that $\Phi(1) = 1$ and $\Phi(m) = n$, in which case either $1 > \max_{k} \sigma_{\Phi(k)}(A)\sigma_k(B) = \sigma_{1}(A)\sigma_1(B)$ or $1 < \min_{k} \sigma_{\Phi(k)}(A)\sigma_k(B) = \sigma_{n}(A) \sigma_m(B)$. This shows that the (i) implies (iii) in the case where $n \geq m$.

To complete the proof of the theorem, assume that $m > n$.
In this case $\tilde{W}\Sigma_A \mathcal{P} \Sigma_B \tilde{V}^*$ can still be identified as a singular value decomposition of the matrix, but now the singular values are given by $m-n$ zeros as well as $\sigma_{\Phi(i)}(A)\sigma_{i}(B)$, $i = 1,\ldots,n$, where $\Phi : \{1, \ldots, n\} \mapsto \{1, \ldots, n\}$ is a permutation. This means that $0$ will always be a singular value of $\tilde{W}\Sigma_A \mathcal{P} \Sigma_B \tilde{V}^*$, and by arguments similar to those in the previous paragraph we must therefore have that for all permutations $\Phi$ it holds that $\max_{k} \sigma_{\Phi(k)}(A)\sigma_k(B) < 1$, and hence in particular that $\sigma_{1}(A)\sigma_1(B) < 1$.
\end{proof}

%%%%%%%%%%%%%%%%%%%%%%%%%%%%%%%%%%%%%%%%%%%%%%%%%%%%%%%%%%%%%%%%%%%%%%%%%%%%%%%%
% End of document; references, etc.
%%%%%%%%%%%%%%%%%%%%%%%%%%%%%%%%%%%%%%%%%%%%%%%%%%%%%%%%%%%%%%%%%%%%%%%%%%%%%%%%

\balance

\bibliographystyle{IEEEtran}
\bibliography{}
%\bibliography{refs}

\end{document}